\newtheorem{thm}{Theorem}[section]
\newtheorem{lem}[thm]{Lemma}
\newtheorem{prop}[thm]{Proposition}
\newtheorem{cor}[thm]{Corollary}
\theoremstyle{definition}
\newtheorem{dfn}[thm]{Definition}
\theoremstyle{remark}
\newtheorem{remark}[thm]{Remark}
\newcommand{\CA}{{\mathcal{A}}}
\newcommand{\CF}{{\mathcal{F}}}
\newcommand{\CG}{{\mathcal{G}}}
\newcommand{\CU}{{\mathcal{U}}}
\newcommand{\CN}{{\mathcal{N}}}
\newcommand{\CI}{{\mathcal{I}}}
\newcommand{\CL}{{\mathcal{L}}}
\newcommand{\CZ}{{\mathcal{Z}}}
\newcommand{\CB}{{\mathcal{B}}}
\newcommand{\CR}{{\mathcal{R}}}
\newcommand{\CP}{{\mathcal{P}}}
\newcommand{\CW}{{\mathcal{W}}}
\newcommand{\af}{\alpha}
\newcommand{\bt}{\beta}
\newcommand{\gm}{\gamma}
\newcommand{\dt}{\delta}
\newcommand{\ld}{\lambda}
\newcommand{\sm}{\sigma}
\newcommand{\Z}{{\mathbb{Z}}}
\newcommand{\N}{{\mathbb{N}}}
\newcommand{\reg}{\operatorname{reg}}
\newcommand{\WCB}{{\widehat{\mathcal{B}}}}
\begin{document}


\title[$C^*$-algebras of GBDS as partial crossed products]
{$C^*$-algebras of generalized Boolean dynamical systems as partial crossed products}

\author[Gilles G. de Castro]{Gilles G. de Castro}
\address{Departamento de Matem\'atica, Universidade Federal de Santa Catarina, 88040-970 Florian\'opolis SC, Brazil.} \email{gilles.castro@ufsc.br}

\author[E. J. Kang]{Eun Ji Kang}
\address{
Research Institute of Mathematics, Seoul National University, Seoul 08826, 
Korea} \email{kkang3333\-@\-gmail.\-com}

\thanks{The second named author was supported by  Basic Science Research Program through  the National Research Foundation of Korea (NRF) funded by the Ministry of Education (No. NRF-2020R1A4A3079066)}

\subjclass[2020]{Primary: 46L55, Secondary: 37B99, 46L05}

\keywords{Generalized Boolean dynamical systems, C*-algebras, Partial actions, Partial crossed products, Gauge-invariant ideals, Graded ideals}

\begin{abstract} 
In this paper, we realize $C^*$-algebras of generalized Boolean dynamical systems as partial crossed products. Reciprocally, we give some sufficient conditions for a partial crossed product to be isomorphic to a C*-algebra of a generalized Boolean dynamical system. As an application, we show that gauge-invariant ideals of C*-algebras of generalized Boolean dynamical systems are themselves C*-algebras of generalized Boolean dynamical systems.
\end{abstract}

\maketitle
\section{Introduction}
Ever since the work of Cuntz-Krieger \cite{CK}, there has been an interest in studying C*-algebras associated with dynamics on totally disconnected locally compact Hausdorff spaces. Usually, there is a combinatorial object associated with the dynamics, such as a square matrix of 0-1 in the case of Cuntz-Krieger algebras. In \cite{BP1}, Bates and Pask initiated the study of C*-algebras associated with labeled graphs and showed how several of the generalization of Cuntz-Krieger algebras fitted in their framework. As a different approach to study these algebras, Carlsen, Ortega and Pardo introduced C*-algebras of Boolean dynamical systems in \cite{COP}. The main point is that totally disconnected locally compact Hausdorff spaces is completely characterized by the Boolean algebra of compact-open sets via Stone duality. However, due to some hypothesis needed in \cite{COP}, not all C*-algebras of labeled spaces were included in their work. This was achieved later by Carlsen and the second named author in \cite{CaK2} with generalized Boolean dynamical systems.

When studying C*-algebras, it is important and fruitful to realize $C^*$-algebras using different models as one can benefit from the established theory about these models. Two of them are groupoids C*-algebras \cite{Ren} and partial crossed products \cite{ExelBook}. In a previous work \cite{CasK1}, the authors have given several groupoid models for the C*-algebras of generalized Boolean dynamical systems.

The first goal of this paper is to  realize $C^*$-algebras of generalized Boolean dynamical systems as partial crossed products.
Given a generalized Boolean dynamical system $(\CB,\CL,\theta, \CI_\af)$,
the boundary path space $\partial E$  arising from a topological correspondence $E$ associated with the generalized Boolean dynamical system  was  introduced in \cite{CasK1}. 
To achieve our goal, we construct a semi-saturated orthogonal partial action  of the free group $\mathbb{F}$ generated by $\CL$ on $\partial E$. 
We then prove that the groupoid  obtained from  the partial action  is isomorphic to the Renault-Deaconu groupoid $\Gamma(\partial E, \sm_E)$, where $\sm_E$ is a shift map on $\partial E$. As a consequence of this isomorphism and the results of \cite{CasK1}, we have that the partial crossed product $C^*$-algebra obtained from the partial action is isomorphic to the $C^*$-algebra of the generalized Boolean dynamical system.

The second goal of the paper is to give sufficient conditions for a partial crossed product to be modeled using generalized Boolean dynamical systems similar to what is done for labeled graphs in \cite{BCGW}. For that, we need a partial action that is similar to the one obtained for generalized Boolean dynamical systems, namely a semi-saturated orthogonal partial action of the free group on a totally disconnected locally compact Hausdorff space with an extra condition. The C*-algebraic version of this model is then obtained by observing that the algebras of continuous functions on totally disconnected locally compact Hausdorff spaces are exactly the class of commutative C*-algebras generated by projections. As an application of this construction, we show that gauge-invariant ideals of C*-algebras of generalized Boolean dynamical systems are themselves C*-algebras of generalized Boolean dynamical systems.

This paper is organized as follows. In Section \ref{sec: prelim}, we provide the necessary definitions and results. 
In Section \ref{sec: partial action partial E}, we recall the definition of the boundary path space $\partial E$
arising from a generalized Boolean dynamical system and  present some of properties of  $\partial E$. We then  define a partial action on $\partial E$ (Proposition \ref{prop:partial action})
 and characterize the  associated partial crossed products (Proposition \ref{generator of the p.c.p}). 
In Section \ref{sec: groupoids}, we prove that the transformation groupoid of the partial action is isomorphic to the boundary path groupoid of the generalized Boolean dynamical system (Theorem \ref{iso groupoids}) and as a consequence, we show that the partial crossed product given in the previous section is isomorphic to $C^*$-algebra of the generalized Boolean dynamical system (Corollary \ref{cor: iso}). In Section \ref{section model}, we give sufficient conditions for a partial action to be modeled by a generalized Boolean dynamical system (Theorem \ref{partial action model}) and we prove that certain partial crossed products are isomorphic to the C*-algebras of Boolean dynamical systems (Corollary \ref{partial model C*-algebra}). Finally, in Section \ref{sec: ideals}, we apply the results of Section \ref{section model} to study gauge-invariant ideals of C*-algebras of generalized Boolean dynamical systems (Corollary \ref{cor: gauge-invariant ideal}).

\section{Preliminaries}\label{sec: prelim}
\subsection{Filters and characters}\label{Filters and characters}

 A {\em filter} in a partially ordered set $P$ with least element 0 is a non-empty subset  $\xi$ of $P$ such that 
\begin{enumerate}
\item[(i)] $0 \notin \xi$,
\item[(ii)] if $ x \in \xi$ and  $x \leq y$, then $y \in \xi$,
\item[(iii)] if $x,y \in \xi$, there exists $z \in \xi$ such that $z \leq x$ and $z \leq y$.
\end{enumerate}
If $P$ is a (meet) semilattice, condition (iii) may be replaced by $x \wedge y$ if $x,y \in \xi$. 
An {\it ultrafilter} is a filter that is not properly contained in any filter. 

For a given  $x \in P$, we define
$$\uparrow x=\{y \in P: x \leq y\} ~\text{and}~ \downarrow x =\{y \in P: y \leq x\},$$
and for subsets $X, Y $ of $ P$, we define
$\uparrow X=\{y \in P: x \leq y ~\text{for some}~x \in X\}$
and $\uparrow_Y X= Y \cap \uparrow X$. The sets $\uparrow_Y x$, $\downarrow_Y x$, $\downarrow X$ and $\downarrow_Y X$ should have their obvious meaning.

We mean by $E$ a semilattice with 0. 
A {\it character} on $E$ is a nonzero function  $\phi$ from $ E$ to the Boolean algebra $ \{0,1\}$ such that $$ \phi(0)=0  ~\text{and}~  \phi(ef)=\phi(e)\phi(f)$$ for all $e,f \in E$. We denote by $\hat E_0$ the set of all characters on $E$. We view $\hat E_0$ as a topological space equipped with the product topology inherited from $\{0,1\}^{E}$. 
     It is easy to see that $\hat E_0$ is a locally compact totally disconnected Hausdorff space. 
     
      Given a filter $\eta$ in $E$, the map $$\phi_{\eta}: E \rightarrow \{0,1\} ~\text{ given by}~ \phi_{\eta}(e) = \left\{ 
\begin{array}{ll}
    1 & \hbox{if\ } e \in \eta, \\
    0 &  \hbox{otherwise} \\
    \end{array}
\right.$$ is a character. Conversely, for a character $\phi$ on $E$, the set $$\eta_{\phi}=\{e \in E: \phi(e)=1\}$$ is a filter. These correspondences are mutually inverses.

A character $\phi$ of $E$ is called an  {\it ultra-character} if the corresponding filter $\xi_{\phi}$ is an ultrafilter. 
We denote by $\hat E_{\infty}$ the set of all ultra-characters.

  Given $x \in E$, a set $Z  \subseteq \downarrow x$ is a {\em cover} for $x$ if for all non-zero $y \in \downarrow x$, there exists $z \in Z$ such that $z \wedge y \neq 0$.
    A character $\phi$ of $E$ is {\it tight} if for every $x \in E$ and every finite cover $Z$ for $x$, we have 
$$ \bigvee_{z \in Z} \phi(z) = \phi(x).$$
The set of all tight characters is denoted by $\hat E_{tight}$, and 
 called the {\it tight spectrum} of $E$. 
  It is a closed subspace of $\hat E_0$ containing $\hat E_{\infty}$ as a dense subspace \cite[Sect. 12]{Exel1}.

\subsection{Boolean algebras}

A {\em Boolean algebra} is a set $\CB$ with a distinguished element $\emptyset$ and maps $\cap: \CB \times \CB \rightarrow \CB$, $\cup: \CB \times \CB \rightarrow \CB$ and $\setminus: \CB \times \CB \rightarrow \CB$ such that $(\CB,\cap,\cup)$ is a distributive lattice, $A\cap\emptyset=\emptyset$ for all $A\in\CB$, and $(A\cap B)\cup (A\setminus B)=A$ and $(A\cap B)\cap (A\setminus B)=\emptyset$ for all $A,B\in\CB$. The Boolean algebra $\CB$ is called {\em unital} if there exists $1 \in \CB$ such that $1 \cup A = 1$ and $1 \cap A=A$ for all $A \in \CB$ (often, Boolean algebras are assumed to be unital and what we here call a Boolean algebra is often called a \emph{generalized Boolean algebra}).

We call $A\cup B$ the \emph{union} of $A$ and $B$, $A\cap B$ the \emph{intersection} of $A$ and $B$, and $A\setminus B$ the \emph{relative complement} of $B$ with respect to $A$. A subset $\CB' \subseteq \CB$ is called a {\em Boolean subalgebra} if $\emptyset\in\CB'$ and $\CB'$ is closed under taking unions, intersections and relative complements. A Boolean subalgebra of a Boolean algebra is itself a Boolean algebra.

We define a partial order on $\CB$ as follows: for $A,B \in \CB$,
\[
A \subseteq B ~~~\text{if and only if}~~~A \cap B =A.
\]
Then $(\CB, \subseteq)$ is a partially ordered set, and $A\cup B$ and $A\cap B$ are the least upper-bound and the greatest lower-bound of $A$ and $B$ with respect to the partial order $\subseteq$. If $A\subseteq B$, then we say that $A$ is a \emph{subset of} $B$.

A non-empty subset $\CI$ of $\CB$ is called  an {\em ideal} \cite[Definition 2.4]{COP} if 
\begin{enumerate}
\item[(i)] if $A, B \in \CI$, then $A \cup B \in \CI$,
\item[(ii)] if $A \in \CI$ and $ B \in \CB$, then   $A \cap B \in \CI$. 
\end{enumerate}

An ideal $\CI$ of a Boolean algebra $\CB$ is a Boolean subalgebra. For $A \in \CB$, the ideal generated by $A$ is defined by $\CI_A:=\{ B \in \CB : B \subseteq A\}.$

A filter in $\CB$ is {\em prime} if for every $B,B' \in \CB$ with $B \cup B'\in\xi$, we have that either $B \in \xi$ or $B' \in \xi$. We note that $\xi$ is an ultrafilter if and only if it is a prime filter, and we use this equivalence throughout the paper without further mention.

 Given a Boolean algebra $\CB$, we write $\widehat{\CB}$  for the set of all ultrafilters of $\CB$. Notice that if $A\in\CB\setminus\{\emptyset\}$, then $\{B\in\CB:A\subseteq B\}$ is a filter, and it then follows from Zorn's Lemma that there is an ultrafilter $\eta\in\widehat{\CB}$ that contains $A$. For $A\in\CB$, we let $$Z(A):=\{\xi\in\widehat{\CB}:A\in\xi\}$$ and we equip $\widehat{\CB}$ with the topology generated by $\{Z(A): A\in\CB\}$. Then $\widehat{\CB}$ is a totally disconnected locally compact Hausdorff space, $\{Z(A): A\in\CB\}$ is a basis for the topology, and each $Z(A)$ is compact and open.

\subsection{Boolean dynamical systems}

A map $\phi: \CB \rightarrow \CB'$ between two Boolean algebras is called a {\em Boolean homomorphism} (\cite[Definition 2.1]{COP}) if $\phi(A \cap B)=\phi(A) \cap \phi(B)$, $\phi(A \cup B)=\phi(A) \cup \phi(B)$, and $\phi(A \setminus B)=\phi(A) \setminus \phi(B)$ for all $A,B \in \CB$.

A map $\theta: \CB \rightarrow \CB $ is called an {\it action} (\cite[Definition 3.1]{COP})  on a Boolean algebra $\CB$ if it is a Boolean homomorphism with $\theta(\emptyset)=\emptyset$.

Given a set $\CL$ and any $n \in \N$, we define $\CL^n:=\{(\af_1, \dots, \af_n): \af_i \in \CL\}$,  $\CL^{\geq 1}=\cup_{n \geq 1} \CL^n$  and $\CL^*:=\cup_{n \geq 0} \CL^n$, where $\CL^0:=\{\emptyset \}$. For $\alpha\in\CL^n$, we write  $|\af|:=n$. For $\af=(\af_1, \dots, \af_n), \beta=(\beta_1,\dots,\beta_m) \in \CL^*$, we will usually write $\af_1 \dots \af_n$ instead of $(\af_1, \dots, \af_n)$ and use $\alpha\beta$ to denote the word $\af_1 \cdots \af_n\beta_1\dots\beta_m$ (if $\alpha=\emptyset$, then $\alpha\beta:=\beta$; and if $\beta=\emptyset$, then $\alpha\beta:=\alpha$). 
  For $1\leq i\leq j\leq |\af|$, we also denote by $\af_{i,j}$ the sub-word $\af_i\cdots \af_j$ of  $\af=\af_1\af_2\cdots\af_{|\af|}$, where $\af_{i,i}=\af_i$. If $j < i$, set $\af_{i,j} =\emptyset$.
  
 We also  let $\CL^\infty$ denote the set of infinite sequences with entries in $\CL$. 
If $x=(x_1,x_2,\dots)\in\CL^\infty$ and $n\in\N$, then we let $x_{1,n}$ denote the word $x_1x_2\cdots x_n\in\CL^n$.

A {\em Boolean dynamical system} is a triple $(\CB,\CL,\theta)$ where $\CB$ is a Boolean algebra, $\CL$ is a set, and $\{\theta_\af\}_{\af \in \CL}$ is a set of actions on $\CB$. For $\af=\af_1 \cdots \af_n \in \CL^{\geq 1}$, the action $\theta_\af: \CB \rightarrow \CB$ is defined as $\theta_\af:=\theta_{\af_n} \circ \cdots \circ \theta_{\af_1}$.  We  also define $\theta_\emptyset:=\text{Id}$.

For $B \in \CB$, we define
\[
\Delta_B^{(\CB,\CL,\theta)}:=\{\af \in \CL:\theta_\af(B) \neq
\emptyset \} ~\text{and}~  \ld_B^{(\CB,\CL,\theta)}:=|\Delta_B^{(\CB,\CL,\theta)}|.
\]
We will often just write $\Delta_B$ and $\ld_B$ instead of
$\Delta_B^{(\CB,\CL,\theta)}$ and $\ld_B^{(\CB,\CL,\theta)}$.

We say that $A \in \CB$ is {\em regular} (\cite[Definition 3.5]{COP})
if for any $\emptyset \neq B \in \CI_A$, we have $0 < \ld_B < \infty$.
If $A \in \CB$ is not regular, then it is called a {\em singular} set.
We write $\CB^{(\CB,\CL,\theta)}_{reg}$ or just $\CB_{reg}$ for the
set of all regular sets. Notice that $\emptyset\in\CB_{reg}$.

\subsection{Generalized Boolean dynamical systems and their $C^*$-algebras}\label{GBDS}

Let $(\CB,\CL,\theta)$ be a Boolean dynamical system and let 
\[
\mathcal{R}_\alpha^{(\CB,\CL,\theta)}:=\{A\in\mathcal{B}:A\subseteq\theta_\alpha(B)\text{ for some }B\in\mathcal{B}\}
\]
for each $\alpha \in \mathcal{L}$. Note that each $\CR_\af^{(\CB,\CL,\theta)}$ is an ideal of $\CB$. 
We will often, when it is clear which Boolean dynamical system we are working with, just write $\CR_\af$ instead of $\CR_\af^{(\CB,\CL,\theta)}$.
\

\begin{dfn}(\cite[Definition 3.2]{CaK2})\label{def:GBDS} 
A {\em generalized Boolean dynamical system} is a quadruple  $(\CB,\CL,\theta,\CI_\alpha)$ where  $(\CB,\CL,\theta)$ is  a Boolean dynamical system  and  $\{\CI_\alpha:\alpha\in\CL\}$ is a family of ideals in $\CB$ such that $\CR_\alpha\subseteq\CI_\alpha$ for each $\alpha\in\CL$.
\end{dfn}

\begin{dfn}\label{def:representation of RGBDS} 
Let $(\CB,\CL,\theta, \CI_\af)$ be a  generalized Boolean dynamical system. A {\it  $(\CB, \CL, \theta, \CI_\af)$-representation (or  Cuntz--Krieger representation of $(\CB, \CL,
\theta,\CI_\af)$)} is a family of projections $\{P_A:A\in\mathcal{B}\}$ and a family of partial isometries $\{S_{\alpha,B}:\alpha\in\mathcal{L},\ B\in\mathcal{I}_\alpha\}$ in a $C^*$-algebra $\CA$ such that for $A,A'\in\mathcal{B}$, $\alpha,\alpha'\in\mathcal{L}$, $B\in\mathcal{I}_\alpha$ and $B'\in\mathcal{I}_{\alpha'}$,
\begin{enumerate}
\item[(i)] $P_\emptyset=0$, $P_{A\cap A'}=P_AP_{A'}$, and $P_{A\cup A'}=P_A+P_{A'}-P_{A\cap A'}$;
\item[(ii)] $P_AS_{\alpha,B}=S_{\alpha,  B}P_{\theta_\af(A)}$;
\item[(iii)] $S_{\alpha,B}^*S_{\alpha',B'}=\delta_{\alpha,\alpha'}P_{B\cap B'}$;
\item[(iv)] $P_A=\sum_{\af \in\Delta_A}S_{\af,\theta_\af(A)}S_{\af,\theta_\af(A)}^*$ for all  $A\in \mathcal{B}_{reg}$. 
\end{enumerate}
\end{dfn}

Given a $(\CB, \CL, \theta, \CI_\af)$-representation $\{P_A, S_{\af,B}\}$ in a $C^*$-algebra $\CA$, we denote by $C^*(P_A, S_{\af,B})$ the $C^*$-subalgebra of $\CA$ generated by $\{ P_A,  S_{\af,B}\}$.
It is shown in \cite{CaK2} that there exists  a universal $(\CB, \CL, \theta, \CI_\af)$-representation $\{p_A, s_{\af,B}: A\in \CB, \af \in \CL ~\text{and}~ B \in \CI_\af\}$   in a  $C^*$-algebra. 
 We write $C^*(\mathcal{B},\mathcal{L},\theta, \CI_\af)$ for $C^*(p_A,s_{\af,B})$ and    call it the {\it  $C^*$-algebra of $(\CB,\CL,\theta,\CI_\alpha)$}. 
 When $(\CB,\CL,\theta)$ is a Boolean dynamical system, then we write
$C^*(\CB,\CL,\theta)$ for $C^*(\CB,\CL,\theta, \CR_\af)$ and call it
the \emph{$C^*$-algebra of $(\CB,\CL,\theta)$}.

For $\af=\af_1\af_2 \cdots \af_n \in \CL^{\geq 1}$, we define
\begin{align*}
\CI_\af:=\{A \in \CB : A \subseteq \theta_{\af_2 \cdots \af_n}(B)~\text{for some }~ B \in \CI_{\af_1}\}.
\end{align*}
 For $\af =\emptyset$, we  define $\CI_\emptyset := \CB$.

\begin{dfn}(\cite[Definition 3.6]{CaK2}) Let $\{P_A,\ S_{\alpha,B}: A\in\CB,\ \alpha\in\CL,\ B\in\CI_\alpha\}$ be a $(\CB,\CL,\theta,\CI_\alpha)$-representation.
For $\af=\af_1\af_2 \cdots \af_n \in \CL^{\geq 1}$ and $A \in \CI_{\af}$, we define
\[
S_{\af,A}:=S_{\af_1,B}S_{\af_2, \theta_{\af_2}(B)}S_{\af_3, \theta_{\af_2\af_3}(B)} \cdots S_{\af_n,A},
\] 
where $B \in \CI_{\af_1}$ is such that $A \subseteq \theta_{\af_2 \cdots \af_n}(B)$. For $\af = \emptyset$, we also define $S_{\emptyset, A}:=P_A$.
\end{dfn}

\begin{remark}\label{span}
Let $\{P_A,\ S_{\alpha,B}: A\in\CB, \alpha\in\CL ~\text{and}~ B\in\CI_\alpha\}$
be a $(\CB,\CL,\theta,\CI_\alpha)$-representation. 
\begin{enumerate}
\item  For $\af, \bt \in \CL^*$, $A \in \CI_\af$ and $B \in \CI_\bt$, we have the equality 
\[
S_{\af,A}^*S_{\bt,B}= \left\{ 
\begin{array}{ll}
    P_{A \cap B} & \hbox{if\ }\af =\bt, \\
    S_{\af', A \cap \theta_{\af'}(B)}^* & \hbox{if\ }\af =\bt\af', \\
    S_{\bt',B \cap \theta_{\bt'}(A)}   & \hbox{if\ } \bt=\af\bt', \\
    0 & \hbox{otherwise.} \\
\end{array}
\right.
\]
 We then have
that
\begin{align*}
C^*(P_A,S_{\alpha,B})&=\overline{\operatorname{span}}\{
S_{\af,A}S_{\bt,B}^*: ~\af,\bt \in \CL^* ~\text{and}~ A \in \CI_\af, B
\in \CI_\bt\}\\
&=\overline{{\rm \operatorname{span}}}\{S_{\af,A}S_{\bt,A}^*: \af,\bt
\in \CL^* ~\text{and}~ A \in \CI_\af\cap \CI_\bt \}.
\end{align*}
\item It follows from the universal property of $C^*(\CB,\CL,\theta,
\CI_\af)=C^*(p_A, s_{\af,B})$ that there is a strongly continuous
action $\gm:\mathbb T\to {\rm Aut}(C^*(\CB,\CL,\theta, \CI_\af))$,
which we call the {\it gauge action}, such that

\[
\gm_z(p_A)=p_A   \ \text{ and } \ \gm_z(s_{\af,B})=zs_{\af,B}
\]
for $A\in \CB$, $\af \in \CL$ and $B \in \CI_\af$.
 \end{enumerate}
\end{remark}

\subsection{An inverse semigroup}\label{An inverse semigroup}
 Let $(\CB, \CL,\theta, \CI_\af)$ be a generalized Boolean dynamical system and let
$$S_{(\CB, \CL,\theta, \CI_\af)}:=\{(\af, A, \bt): \af,\bt \in \CL^* ~\text{and}~ \emptyset \neq   A \in \CI_\af \cap \CI_\bt \} \cup \{0\}. $$
To simplify the notation, we write $S=S_{(\CB, \CL,\theta, \CI_\af)}$ when it is clear which generalized Boolean dynamical system we are working with.

A binary operation on $S$ is defined as follows: $s \cdot 0 = 0 \cdot s=0$ for all $s \in S$ and 
for $(\af, A, \bt)$ and $(\gm, B, \dt)$ in $S$, 

$$(\af, A, \bt) \cdot (\gm, B, \dt)=\left\{
 \begin{array}{ll}    (\af, A \cap B, \dt) & \hbox{if\ }\bt =\gm  ~\text{and}~ A\cap B \neq \emptyset, \\
 (\af\gm', \theta_{\gm'}(A)\cap B, \dt) & \hbox{if\ } \gm=\bt\gm' ~\text{and}~ \theta_{\gm'}(A)\cap B \neq \emptyset,\\
 (\af, A\cap \theta_{\bt'}(B) ,\dt\bt') & \hbox{if\ } \bt=\gm\bt'  ~\text{and}~ A\cap \theta_{\bt'}(B) \neq \emptyset,\\
          0 & \hbox{otherwise.}
                      \end{array}
                    \right.
$$
 If for a given $s=(\af, A, \bt) \in S$ we define $s^*=(\bt, A, \af)$, then the set $S$, endowed with the operation above, 
 is an  inverse semigroup with zero element 0 (\cite[Sect. 2.3]{BCM1}), whose semilattice of  idempotents is 
$$E(S):=\{(\af, A, \af): \af \in \CL^* ~\text{and}~ \emptyset \neq A \in \CI_\af\}\cup \{0\}.$$ 

The natural order in the semilattice $E(S)$ is given as follows: for $\af,\bt \in \CL^*$, $A \in \CI_\af$ and $B \in \CI_\bt$, we have 
 $$(\af, A, \af)\leq (\bt, B, \bt) ~\text{if and only if}~\af=\bt\af' ~\text{and}~A \subseteq\theta_{\af'}(B)$$ (\cite[Lemma 3.1]{CasK1}).

\subsection{Filters in $E(S)$}
From now on, we define  $\CW^{\geq 1}=\{\alpha\in\CL^{\geq 1}:\CI_\alpha\neq \{\emptyset\}\}$,  $\CW^*=\{\alpha\in\CL^*:\CI_\alpha\neq \{\emptyset\}\}$, $\CW^{\infty}=\{\alpha\in\CL^{\infty}:\alpha_{1,n}\in\CW^{\geq 1} ~\text{for all}~ n \geq 1\}$ and $\CW^{\leq\infty}=\CW^*\cup \CW^{\infty}$. 

Let $\af \in \CW^{\leq \infty}$ and $\{\CF_n\}_{0 \leq n \leq |\af|}$ be a family such that $\CF_n$ is a filter in $\CI_{\af_{1,n}}$ for every $n > 0$ and $\CF_0$ is either a filter in $\CB$ or $\CF_0=\emptyset$. The family $\{\CF_n\}_{0 \leq n \leq |\af|}$ is said to be {\it complete for $\af$} if 
$$\CF_n=\{A \in \CB :  \theta_{\af_{n+1}}(A) \in \CF_{n+1}\}$$
for all $0 \leq n <|\af|$.

\begin{thm}\label{filter-bij-complete pair}(\cite[Theorem 3.12]{CasK1}) Let $(\CB, \CL,\theta, \CI_\af)$ be a generalized Boolean dynamical system and $S$ be its associated inverse semigroup. Then there is a bijective correspondence between filters in $E(S)$ and pairs $(\af, \{ \CF_n\}_{0 \leq n \leq |\af|})$, where $\af \in \CW^{\leq \infty}$ and $\{\CF_n\}_{0 \leq n \leq |\af|}$ is a complete family for $\af$.
\end{thm}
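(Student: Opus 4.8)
The plan is to construct the bijection explicitly in both directions and then check that the two constructions are mutually inverse. Given a filter $\CG$ in $E(S)$, I would first extract the "word part" $\af$: for each idempotent $(\gm, A, \gm)\in\CG$, the order relation on $E(S)$ forces comparable elements to have words that are prefixes of one another (if $(\gm,A,\gm)\le(\dt,B,\dt)$ then $\gm=\dt\gm'$), and condition (iii) in the definition of a filter says any two elements of $\CG$ have a common lower bound; hence the set of words $\{\gm : (\gm,A,\gm)\in\CG \text{ for some }A\}$ is totally ordered by the prefix relation and thus determines a unique element $\af\in\CW^*\cup\CW^\infty=\CW^{\leq\infty}$ (either a maximal finite word appears, or the words grow without bound giving an infinite word). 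For $0\le n\le|\af|$ I would then set
\[
\CF_n:=\{A\in\CI_{\af_{1,n}} : (\af_{1,n}, A, \af_{1,n})\in\CG\}\cup\{A\in\CB : A\supseteq A' \text{ for some } A' \text{ in the former set}\},
\]
i.e. the upward closure in $\CI_{\af_{1,n}}$ (or in $\CB$ when $n=0$) of the "level-$n$ slice" of $\CG$; if no idempotent with word $\af_{1,n}$ lies in $\CG$ — which for a finite $\af$ can only happen at $n=0$ when $\af=\emptyset$ is not forced — one sets $\CF_0=\emptyset$. One must check each $\CF_n$ is genuinely a filter (nonemptiness and condition (iii) come from the corresponding properties of $\CG$ together with the formula $(\af_{1,n},A,\af_{1,n})\cdot(\af_{1,n},B,\af_{1,n})=(\af_{1,n},A\cap B,\af_{1,n})$ for the meet).

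Next I would verify the completeness relation $\CF_n=\{A\in\CB:\theta_{\af_{n+1}}(A)\in\CF_{n+1}\}$ for $n<|\af|$. The inclusion "$\subseteq$" is the nontrivial direction and should follow by observing that $(\af_{1,n},A,\af_{1,n})\in\CG$ together with an element of $\CG$ witnessing the word $\af_{1,n+1}$ can be multiplied: using the second branch of the product, $(\af_{1,n},A,\af_{1,n})\cdot(\af_{1,n+1},C,\af_{1,n+1})=(\af_{1,n+1},\theta_{\af_{n+1}}(A)\cap C,\af_{1,n+1})$, and this lands in $\CG$ by filter condition (ii) applied to a common lower bound, forcing $\theta_{\af_{n+1}}(A)\in\CF_{n+1}$ after taking upward closure. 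The reverse inclusion "$\supseteq$" is similar, multiplying an element witnessing word $\af_{1,n}$ in $\CG$ on the appropriate side. This uses in an essential way that the words appearing in $\CG$ form an initial segment, so there is always an idempotent with word $\af_{1,n}$ in $\CG$ whenever $n<|\af|$.

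For the reverse map, given $(\af,\{\CF_n\})$ with $\{\CF_n\}$ complete for $\af$, I would define $\CG$ to be the upward closure in $E(S)$ of $\{(\af_{1,n},A,\af_{1,n}) : 0\le n\le|\af|,\ A\in\CF_n\}$, and check this is a filter: downward-directedness of $\CG$ reduces, via the product formula and completeness, to downward-directedness of each $\CF_n$ together with the observation that $(\af_{1,m},A,\af_{1,m})$ and $(\af_{1,n},B,\af_{1,n})$ with $m\le n$ have common lower bound $(\af_{1,n},\theta_{\af_{n+1}\cdots\af_m}(A)\cap B,\af_{1,n})$, which is nonzero and in $\CG$ precisely because completeness gives $\theta_{\af_{m+1}\cdots\af_n}(A)\in\CF_n$ whenever $A\in\CF_m$. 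Finally, I would check the two assignments are mutually inverse: starting from $\CG$, recovering $\af$ and $\{\CF_n\}$ and re-closing upward gives back $\CG$ because every element of $E(S)$ below some $(\af_{1,n},A,\af_{1,n})$ has, by the order description, word of the form $\af_{1,n}\gm'$ — but membership in $E(S)$ and the filter property pin down that only the $\gm'=\emptyset$ case is relevant up to the stated closure; and starting from $(\af,\{\CF_n\})$ the round trip is immediate from the definitions. The main obstacle I anticipate is the bookkeeping showing that the word part of a filter in $E(S)$ is well-defined and forms an initial segment of a single element of $\CW^{\leq\infty}$ — in particular handling the dichotomy between filters whose words are bounded (yielding a finite $\af$, with a possibly-empty $\CF_0$) and those whose words are unbounded (yielding an infinite $\af$) — and making sure the edge case $\af=\emptyset$ with $\CF_0$ a genuine filter in $\CB$ versus the degenerate-looking cases are all consistently covered. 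Most of the remaining verifications are routine manipulations with the three-branch product formula and the order relation on $E(S)$ recalled above.
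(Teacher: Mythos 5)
This statement is not proved in the present paper at all: it is imported verbatim from \cite[Theorem 3.12]{CasK1}, so there is no in-paper argument to compare yours against. That said, your outline --- read off the word $\af$ from the prefix-comparability of the words occurring in the filter, take the level-$n$ slices as the $\CF_n$, and invert by forming the up-set of $\{(\af_{1,n},A,\af_{1,n}):A\in\CF_n\}$ --- is the natural route and is exactly the correspondence the paper itself records afterwards via $\xi^\af_n=\{A\in\CB:(\af_{1,n},A,\af_{1,n})\in\xi^\af\}$.

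Two places in your sketch need repair, and they are precisely where the real content sits. First, your displayed definition of $\CF_n$ adjoins the upward closure taken in $\CB$; this is both unnecessary and inconsistent with the requirement that $\CF_n$ be a filter \emph{in} $\CI_{\af_{1,n}}$ for $n\geq 1$. No closure should be taken: the slice $\{A:(\af_{1,n},A,\af_{1,n})\in\CG\}$ is automatically upward closed inside $\CI_{\af_{1,n}}$ (apply the order description with $\af'=\emptyset$), and closing up in $\CB$ would typically leave $\CI_{\af_{1,n}}$ and destroy the match with the stated target of the bijection. Second, you have the difficulty of the completeness check backwards: the inclusion $\subseteq$ is the easy one (meet a level-$n$ element of $\CG$ with any level-$(n{+}1)$ element, as you do). The delicate inclusion is $\supseteq$: given $A$ with $\theta_{\af_{n+1}}(A)\in\CF_{n+1}$ you want $(\af_{1,n},A,\af_{1,n})\in\CG$ by upward closedness from $(\af_{1,n+1},\theta_{\af_{n+1}}(A),\af_{1,n+1})$, and before you can do that you must know that $A\in\CI_{\af_{1,n}}\setminus\{\emptyset\}$, i.e.\ that the triple is an idempotent of $E(S)$ at all; the completeness equality has to be read inside $\CI_{\af_{1,n}}$ (with $\CI_\emptyset=\CB$ at level $0$), as the requirement that $\CF_n$ be a filter in $\CI_{\af_{1,n}}$ forces. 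Relatedly, the ``initial segment'' fact you defer to bookkeeping is a genuine (if short) lemma: if $(\af_{1,n},B,\af_{1,n})\in\CG$, write $B\subseteq\theta_{\af_{2,n}}(C)$ with $C\in\CI_{\af_1}$ and check that $(\af_{1,m},\theta_{\af_{2,m}}(C),\af_{1,m})$ is an upper bound in $E(S)$; this shows the slices are nonempty for all $1\leq m\leq n$ and simultaneously explains why only the level $0$ slice can be empty. With these points fixed (and the index slip $\theta_{\af_{n+1}\cdots\af_m}$ versus $\theta_{\af_{m+1}\cdots\af_n}$ corrected), the remaining verifications --- directedness of the up-set via completeness, and the two round trips --- go through as you describe.
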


Filters are of {\it finite type} if they are associated with pairs $(\af, \{\CF_n\}_{0 \leq n \leq |\af|})$ for which $|\af|< \infty$, and of {\it infinite type} otherwise.

In view of this, a filter $\xi$ in $E(S)$ with associated  $\af \in \CL^{\leq \infty}$ is denoted by $\xi^\af$ to stress $\af$; in addition, the filters in the complete family associated with $\xi^\af$ will be denoted by $\xi_n^\af$ (or simply $\xi_n$). Specifically, 
$$\xi^\af_n=\{A \in\CB: (\af_{1,n}, A, \af_{1,n}) \in\xi^\af\}.$$

 We denote by $\mathsf{T}$  the set of tight filters on $E:=E(S)$ and we equip  $\mathsf{T}$  with the topology induced from the topology of pointwise convergence of character, via the bijection between tight characters and tight filters given in subsection \hyperref[Filters and characters]{1.1}. 
Note then that $\mathsf{T}$ is (homeomorphic to) the tight spectrum $\hat{E}_{tight}$ of $E$. 

\begin{remark}\label{basis of T}
Using the bijection between filters and characters as well as the topology on characters given by pointwise convergence, we see that a basis of compact-open sets for the induced topology on $\mathsf{T}$ is given by sets of the form
\[V_{e:e_1\ldots,e_n}=\{\xi\in\mathsf{T}:e\in\xi,e_1\notin\xi,\ldots,e_n\notin\xi\},\]
where $e \in E$ and $\{e_1, \cdots, e_n\}$ is a finite (possibly empty) subset of $E$. See \cite[Sect. 2.2]{Lawson2012} for more details.
\end{remark}

\begin{thm}\label{char:tight}(\cite[Theorem 3.27]{CasK1})  Let $(\CB, \CL,\theta, \CI_\af)$ be a generalized Boolean dynamical system and $S$ be its associated inverse semigroup. Then the tight filters in $E(S)$ are :
\begin{enumerate}
\item[(i)] The ultrafilters of infinite type.
\item[(ii)] The filters of finite type $\xi^\af$ such that $\xi_{|\af|}$ is an ultrafilter and   $A\notin\CB_{reg}$ for all $A \in \xi_{|\af|}$.
\end{enumerate}
\end{thm}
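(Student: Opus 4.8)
The plan rests on two observations. First, every ultrafilter is tight, since $\hat E_\infty\subseteq\hat E_{tight}$; this disposes of the ``$\Leftarrow$'' part of (i). Second, because the character $\phi_\xi$ attached to a filter $\xi$ is $\{0,1\}$-valued, one checks directly from the definition of tightness that $\phi_\xi$ is tight \emph{if and only if} every finite cover of every $e\in\xi$ meets $\xi$ (the case $e\notin\xi$ is automatic, since an element below $e$ cannot lie in $\xi$). Throughout I would identify a filter with its pair $(\af,\{\xi_n\})$ as in Theorem \ref{filter-bij-complete pair}, use that $(\af\bt,B,\af\bt)\le(\af,A,\af)$ iff $B\subseteq\theta_\bt(A)$, that $\xi^\af$ consists of exactly the idempotents $(\af_{1,m},C,\af_{1,m})$ with $m\le|\af|$ and $C\in\xi_m$, and that by completeness $C\in\xi_m$ iff $\theta_{\af_{m+1,k}}(C)\in\xi_k$ for $m\le k\le|\af|$. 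Two covering facts, proved by the multiplication rule of $S$, will be used: (a) if $\emptyset\neq B_i\subseteq A$ in $\CB$ with $\bigcup_iB_i=A$, then $\{(\af,B_i,\af)\}_i$ covers $(\af,A,\af)$; and (b) if $A\in\CB_{reg}$, then $\{(\af\gm,\theta_\gm(A),\af\gm):\gm\in\Delta_A\}$ is a \emph{finite} cover of $(\af,A,\af)$, where regularity is used both for finiteness of $\Delta_A$ and for the fact that every nonzero $B\subseteq A$ has $\Delta_B\neq\emptyset$.

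For necessity, let $\xi=\xi^\af$ be tight. If some $\xi_m$ (with $1\le m\le|\af|$, or $m=0$ when $\af=\emptyset$) failed to be prime, a witness $B\cup B'\in\xi_m$ with $B,B'\notin\xi_m$ would, via (a), give a finite cover $\{(\af_{1,m},B,\af_{1,m}),(\af_{1,m},B',\af_{1,m})\}$ of $(\af_{1,m},B\cup B',\af_{1,m})\in\xi$ missing $\xi$, a contradiction; hence every such $\xi_m$ is an ultrafilter. If $|\af|=\infty$, any filter properly containing $\xi$ would, by completeness together with the ultrafilter property at each level, equal $\xi$, so $\xi$ is an infinite-type ultrafilter, that is, case (i). If $|\af|=n<\infty$ and some $A\in\xi_n$ were regular, then (b) produces a finite cover of $(\af_{1,n},A,\af_{1,n})\in\xi$ all of whose members have word strictly extending $\af$ and hence lie outside $\xi$, again a contradiction; so every element of $\xi_n$ is singular, that is, case (ii).

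For sufficiency, case (i) is the first observation. For case (ii), let $\xi=\xi^\af$ with $n=|\af|$, $\xi_n$ an ultrafilter and every $A\in\xi_n$ singular, and let $Z$ (with $0\notin Z$) be a finite cover of $e=(\af_{1,m},B,\af_{1,m})\in\xi$; I must show $Z\cap\xi\neq\emptyset$. Split $Z$ by the words of its members into: those $z_i=(\af_{1,k_i},C_i,\af_{1,k_i})$ with $m\le k_i\le n$; those with word incomparable to $\af_{1,n}$; and those of the form $(\af_{1,n}\bt,C,\af_{1,n}\bt)$ with $|\bt|\ge1$. If some $C_i\in\xi_{k_i}$ then $z_i\in\xi$ and we are done; otherwise every $D_i:=\theta_{\af_{k_i+1,n}}(C_i)\notin\xi_n$, so, as $\xi_n$ is an ultrafilter and $\theta_{\af_{m+1,n}}(B)\in\xi_n$, the set $A^*:=\theta_{\af_{m+1,n}}(B)\setminus\bigcup_iD_i$ lies in $\xi_n$. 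Meeting each member of $Z$ with $(\af_{1,n},A^*,\af_{1,n})\le e$ kills the $z_i$ (since $A^*\cap D_i=\emptyset$) and the incomparable ones, and turns those with word $\af_{1,n}\bt$ into $(\af_{1,n}\bt,C\cap\theta_\bt(A^*),\af_{1,n}\bt)$; since meeting a cover of $e$ with $(\af_{1,n},A^*,\af_{1,n})$ yields a cover of $(\af_{1,n},A^*,\af_{1,n})$, these last elements form a finite cover $Z'$ of $(\af_{1,n},A^*,\af_{1,n})$ by idempotents whose words strictly extend $\af_{1,n}$. As $A^*\in\xi_n$ is singular, pick $\emptyset\neq A_0\subseteq A^*$ with $\ld_{A_0}\in\{0,\infty\}$: if $\ld_{A_0}=0$, then $(\af_{1,n},A_0,\af_{1,n})$ meets no member of $Z'$, because $\theta_\bt(A_0)=\emptyset$ for $|\bt|\ge1$; if $\ld_{A_0}=\infty$, choose $\gm\in\Delta_{A_0}$ different from the finitely many first letters of words occurring in $Z'$, so that $(\af_{1,n}\gm,\theta_\gm(A_0),\af_{1,n}\gm)$ meets no member of $Z'$ --- either way contradicting that $Z'$ covers $(\af_{1,n},A^*,\af_{1,n})$. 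Hence $Z\cap\xi\neq\emptyset$ and $\phi_\xi$ is tight.

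The routine ingredients are the two covering facts and the necessity step; the crux is the sufficiency argument for (ii). Its two engines are the compression of an arbitrary finite cover down to a single set $A^*\in\xi_n$ (using that $\xi_n$ is an ultrafilter to absorb the ``low'' part of the cover) and the geometric fact that a singular set admits \emph{no} finite cover by strictly deeper idempotents --- which is precisely where both flavors of singularity ($\ld=0$ and $\ld=\infty$) and the finiteness of the cover are needed. Everything else reduces to bookkeeping with the multiplication rule of $S$ and the parametrization of filters.
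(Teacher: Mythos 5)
Your proof is correct, but note that there is nothing in this paper to compare it against: Theorem \ref{char:tight} is imported verbatim from \cite[Theorem 3.27]{CasK1}, where it is obtained at the end of a longer structural development (the level-by-level parametrization of filters and ultrafilters, the maps between the spaces $X_\af$, and lemmas on regular sets), in the spirit of the labelled-space result of \cite{BCM1}. Your argument is instead a self-contained verification directly from the cover definition of tightness recalled in Section \ref{sec: prelim}, and it checks out: the reduction of tightness of $\phi_\xi$ to ``every finite cover of an element of $\xi$ meets $\xi$'' is valid; your covering facts (a) and (b) hold exactly as stated (in (b) regularity supplies both the finiteness of $\Delta_A$ and $\Delta_C\neq\emptyset$ for every $\emptyset\neq C\subseteq A$); the necessity step correctly forces each level $\xi_m$ to be prime (hence an ultrafilter, which for infinite type yields maximality of $\xi$ in $E(S)$) and, for finite type, forces every member of $\xi_{|\af|}$ to be singular; and the crux of sufficiency for (ii) --- compressing an arbitrary finite cover of $(\af_{1,m},B,\af_{1,m})$ to a single $A^*\in\xi_{|\af|}$ via primeness and iterated completeness, then refuting a finite cover of $(\af_{1,|\af|},A^*,\af_{1,|\af|})$ by strictly longer idempotents through the dichotomy $\ld_{A_0}=0$ or $\ld_{A_0}=\infty$, choosing $\gm\in\Delta_{A_0}$ outside the finitely many first letters occurring in the cover --- is sound; this is precisely where finiteness of the cover and both flavors of singularity are needed. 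If you write this up, make explicit that $\theta_{\af_{m+1,|\af|}}(B)$, the $D_i$ and $A^*$ all lie in the ideal $\CI_{\af_{1,|\af|}}$, so the prime/ultrafilter manipulations happen inside that (possibly non-unital) Boolean algebra where prime and maximal coincide, and that meets in $E(S)$ are computed by the multiplication rule you quote. What your route buys is brevity and independence from the description of the tight spectrum as the closure of the ultrafilters (you only use $\hat E_{\infty}\subseteq\hat E_{tight}$ for case (i)); what the route of \cite{CasK1} buys is the auxiliary machinery that the remainder of the present paper relies on anyway.
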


For each  $\af \in  \CW^*$, 
 we write  $X_\af$ instead of $\widehat{\CI_\af}$ for the set of all ultrafilters in $\CI_\af$ to match our notations with \cite{CasK1}. Note that $X_\emptyset$ denotes the set of ultrafilters in $\CI_\emptyset=\CB$.  
  For $A \in \CI_\af$, we let $$Z(\af, A):=\{\CF \in X_\af: A \in \CF\}$$ and equip $X_\af$
 with the topology generated by $\{Z(\af, A): A\in\CI_\af\}$.
We also  consider the set $X_\emptyset\cup\{\emptyset\}$ with a suitable topology.  If $\CB$ is unital, the topology is such that $\{\emptyset\}$ is an isolated point. If $\CB$ is not unital, then $\emptyset$ plays the role of the point at infinity in the one-point compactification of $X_\emptyset$. 

Given $\af, \bt \in \CW^{\geq 1}$, since the  action $$\theta_\bt : \CI_\af \to \CI_{\af\bt}$$ is a proper Boolean  homomorphism (\cite[Lemma 3.21]{CasK1}), there is its dual morphism 
$$f_{\af[\bt]}: X_{\af\bt} \to X_\af$$ given by 
 $f_{\af[\bt]}(\CF)=\{A \in \CI_\af: \theta_\bt(A) \in \CF\}$.
When $\af=\emptyset$, if $\CF \in X_\bt$, then $\{A \in \CB: \theta_\bt(A) \in \CF\}$ is either an ultrafilter in $\CI_\emptyset(=\CB)$ or  the empty set. We can therefore consider 
$f_{\emptyset[\bt]}: X_\bt \to X_\emptyset \cup \{\emptyset\}.$ Notice that for $\af \in \CW^*$ and $ \bt \in \CW^{\geq 1}$,  $f_{\af[\bt]}$ is  continuous  (\cite[Lemma 3.23]{CasK1}), and that  $f_{\af[\bt\gm]}=f_{\af[\bt]}\circ f_{\af\bt[\gm]}$ for all $\af\in\CW^{*}$ and $\beta,\gamma\in\CW^{\geq 1}$ such that $\alpha\beta\gamma\in\CW^{\geq 1}$.

For $\af \in \CL^{\geq 1}$ and $\bt \in \CL^*$ such that $\af\bt \in \CW^*$, consider an open  subspace
$$X_{(\af)\bt}:=\{\CF \in X_\bt:\CF \cap \CI_{\af\bt}\neq \emptyset  \}$$ of $X_\bt$. Then there is a continuous map 
 $g_{(\af)\bt}: X_{(\af)\bt} \to X_{\af\bt}$ defined  by 
$$g_{(\af)\bt}(\CF):=     \CF \cap \CI_{\af\bt}  $$
for each  $\CF \in X_{(\af)\bt}$ (\cite[Lemm 4.6]{CasK1}). For $\af =\emptyset$, define $X_{(\emptyset)\bt}=X_{\bt}$ and let $g_{(\emptyset)\bt}$ denote the identity function on $X_\bt$.

Also, for $\af \in \CL^{\geq 1}$ and $\bt \in \CL^*$ such that $\af\bt \in \CW^*$, 
there is a continuous map 
$$h_{[\af]\bt}:X_{\af\bt} \to X_{(\af)\bt}$$ defined by 
$$h_{[\af]\bt}(\CF)=\uparrow_{\CI_\bt}\CF$$ for each ultrafilter $\CF \in X_{\af\bt}$.
We note that   $h_{[\af]\bt}: X_{\af\bt} \to X_{(\af)\bt}$ and $g_{(\af)\bt}:X_{(\af)\bt} \to X_{\af\bt}$ are mutually inverses (\cite[Lemma 4.8]{CasK1}).

\subsection{Partial actions}\label{section partial actions}

\begin{dfn} 
A {\it partial action} of a group $G$ on a topological space $X$ is a pair $\Phi=(\{U_t\}_{t \in G}, \{\phi_t\}_{t \in G})$ consisting of a collection $\{U_t\}_{t \in G}$ of open subsets of $X$  and a collection $\{\phi_t\}_{t \in G}$ of homeomorphisms, $$\phi_t: U_{t^{-1}} \to U_t,$$
such that 
\begin{enumerate}
\item $U_e=U_{e^{-1}}=X$ and $\phi_e$ is the identity on $X$,
\item $\phi_s(U_{s^{-1}} \cap U_t)=U_s \cap U_{st}$,
\item $\phi_s(\phi_t(x))=\phi_{st}(x)$ for every $x \in U_{t^{-1}} \cap U_{(st)^{-1}}$.
\end{enumerate}
\end{dfn}

If the partial action is given by the free group $\mathbb{F}$ on a set of generators, then the partial action is {\it semi-saturated} if 
$$\phi_s \circ \phi_t=\phi_{st}$$
for every $s,t \in \mathbb{F}$ such that $|st|=|s|+|t|$, and {\it orthogonal} if $U_a \cap U_b = \emptyset$ for $a, b$ in the set of generator with $a \neq b$.

We refer the reader to \cite{ExelBook} for more details on partial actions and the construction of the partial crossed product.

We describe the groupoid of a partial action as in \cite{A}. Let $\Phi=(\{U_t\}_{t \in G}, \{\phi_t\}_{t \in G})$ be a partial action of  $G$ on $X$. 
Then, 
$$G \ltimes_\varphi X :=\{(x,t,y) \in X \times G \times X: y \in U_{t^{-1}} ~\text{and}~ x=\varphi_t(y)\}$$ is a groupoid with products and inverses given by 
$$(x,t,y)(y,s,z)=(x,ts,z) ~\text{and}~ (x,t,y)^{-1}=(y, t^{-1}, x).$$
We give $G \ltimes_\varphi X$ the topology inherited from the product topology on $X \times G \times X$.

\section{A partial action on $\partial E$ and the associated partial crossed product}\label{sec: partial action partial E}
In this section, we define a partial action from a generalized Boolean dynamical system $(\CB, \CL,\theta, \CI_\af)$. The group that acts is the free group generated by $\CL$ and the space where it acts is the boundary path space of a topological correspondence defined previously in \cite{CasK1}. 
We then prove that the partial crossed product associated with this partial action is generated by an appropriate set of characteristic functions and the generators corresponding to the elements of the free group.

\subsection{The boundary path space $\partial E$}

Let $(\CB, \CL,\theta, \CI_\af)$ be a generalized Boolean dynamical system.  Recall that  $X_\emptyset=\WCB$ is equipped with  the topology generated by $\{Z(A): A\in\CB\}$, where 
$Z(A)=\{\xi\in\widehat{\CB}:A\in\xi\}$, and that 
$X_\af=\widehat{\CI_\af}$ is equipped with the topology generated by  $\{Z(\af, A): A\in\CI_\af\}$, where $Z(\af, A)=\{\xi \in \widehat{\CI_{\af}}: A \in \xi\}$.
 
We let $$E^0_{(\CB,\CL,\theta,\CI_\alpha)}:=X_\emptyset ~\text{and}~	F^0_{(\CB,\CL,\theta,\CI_\alpha)}:=X_\emptyset\cup\{\emptyset\}$$
as  topological spaces. 
We also let
$$	E^1_{(\CB,\CL,\theta,\CI_\alpha)}:=\bigl\{e^\alpha_\eta:\alpha\in\CL,\ \eta\in X_\alpha\bigr\}
$$
and equip $E^1_{(\CB,\CL,\theta,\CI_\alpha)}$ with the topology generated by 
$$ \mathbb{V}:=\bigcup_{\alpha\in\CL} \{Z^1(\af, B):B\in\CI_\alpha\}, $$
where $Z^1(\af, B):=\{e^\af_\eta: \eta \in X_\af  , B \in \eta\}.$  Note that $E^1_{(\CB,\CL,\theta,\CI_\alpha)}$ is homeomorphic to the disjoint union of the family $\{X_{\af}\}_{\af\in\CL}$.

\begin{prop}(\cite[Proposition 7.1]{CasK1})\label{def:topological correspondence} Let $(\CB,\CL,\theta, \CI_\af)$ be a generalized Boolean dynamical system and let $E^0:=E^0_{(\CB,\CL,\theta,\CI_\alpha)}$, $F^0:=F^0_{(\CB,\CL,\theta,\CI_\alpha)}$ and $E^1:=E^1_{(\CB,\CL,\theta,\CI_\alpha)}$ be as above. If we define the maps
 $d:E^1\to E^0$ and $r:E^1\to F^0$ by 
\[
	d(e^\alpha_\eta)=h_{[\alpha]\emptyset}(\eta) \text{ and } r(e^\alpha_\eta)=f_{\emptyset[\af]}(\eta),
\]
 then $(E^1, d,r)$ is a topological correspondence from $E^0$ to $F^0$.
\end{prop}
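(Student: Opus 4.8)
The plan is to unwind the definitions and verify the two structural requirements on a topological correspondence: that the domain map $d$ is a local homeomorphism and that the range map $r$ is continuous, after first confirming that $E^0$, $F^0$ and $E^1$ are totally disconnected locally compact Hausdorff spaces. For this last point I would note that $E^0=X_\emptyset=\widehat{\CB}$ is such a space by the Boolean-algebras subsection; that $F^0=X_\emptyset\cup\{\emptyset\}$ is, by construction of its topology, either the disjoint union of $X_\emptyset$ with the isolated point $\{\emptyset\}$ (when $\CB$ is unital) or the one-point compactification of $X_\emptyset$ (when $\CB$ is not unital), hence again totally disconnected locally compact Hausdorff, with $X_\emptyset$ an open subspace; and that $E^1$ is, as already observed in the excerpt, homeomorphic to the disjoint union $\bigsqcup_{\af\in\CL}X_\af$ of the spaces $X_\af=\widehat{\CI_\af}$ (the piece $X_\af$ being empty when $\CI_\af=\{\emptyset\}$), so it inherits the same properties from its clopen pieces.

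Next I would prove that $d$ is a local homeomorphism. Transporting along the homeomorphism $E^1\cong\bigsqcup_{\af\in\CL}X_\af$, the restriction of $d$ to the clopen piece corresponding to $\af$ is, because $d(e^\af_\eta)=h_{[\af]\emptyset}(\eta)$, exactly the map $h_{[\af]\emptyset}\colon X_\af\to X_{(\af)\emptyset}$ obtained by specialising $\bt=\emptyset$ in the discussion preceding the proposition. By construction $X_{(\af)\emptyset}=\{\CF\in X_\emptyset:\CF\cap\CI_\af\neq\emptyset\}$ is an open subspace of $X_\emptyset=E^0$, and by \cite[Lemma 4.6]{CasK1} together with \cite[Lemma 4.8]{CasK1} the maps $h_{[\af]\emptyset}$ and $g_{(\af)\emptyset}$ are mutually inverse continuous bijections, hence homeomorphisms. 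Thus $d$ restricts to a homeomorphism of each (open) piece of $E^1$ onto an open subset of $E^0$, which is precisely the assertion that $d$ is a local homeomorphism.

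Then I would check that $r$ is continuous. Since $r(e^\af_\eta)=f_{\emptyset[\af]}(\eta)$, the restriction of $r$ to the $\af$-piece is the map $f_{\emptyset[\af]}\colon X_\af\to X_\emptyset\cup\{\emptyset\}=F^0$, which is continuous by the $\emptyset\in\CW^*$, $\af\in\CW^{\geq1}$ case of \cite[Lemma 3.23]{CasK1} recalled above (and vacuously so on the empty pieces). Because $E^1$ carries the disjoint-union topology, continuity on each piece gives continuity of $r$ on $E^1$; combined with the fact that $X_\emptyset=E^0$ is an open subspace of $F^0$, this yields every condition in the definition of a topological correspondence from $E^0$ to $F^0$, proving the proposition.

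The substantive content is drawn entirely from \cite{CasK1}: the continuity and mutual inverseness of $f_{\emptyset[\af]}$, $g_{(\af)\emptyset}$, $h_{[\af]\emptyset}$, and the openness of $X_{(\af)\emptyset}$ in $X_\emptyset$. Consequently the only genuine work is the bookkeeping of the first two paragraphs — identifying the topology on $E^1$ generated by $\mathbb{V}$ with the disjoint-union topology on $\bigsqcup_{\af\in\CL}X_\af$, disposing of the degenerate pieces with $\CI_\af=\{\emptyset\}$, and pinning down the topology on $F^0$ in the unital and non-unital cases — after which the local-homeomorphism property of $d$ and the continuity of $r$ drop out piecewise. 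I expect this piecewise reduction, rather than any estimate or inequality, to be the only place that requires attention.
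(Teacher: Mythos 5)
Your argument is correct, but note that the paper itself contains no proof of this statement: it is imported verbatim from \cite[Proposition 7.1]{CasK1}, so the only ``proof'' in the present paper is the citation. What you have written is in effect a reconstruction of the argument of the cited source, and it uses exactly the ingredients the present paper recalls in its preliminaries: the homeomorphism $E^1\cong\bigsqcup_{\af\in\CL}X_\af$ onto clopen pieces, the fact that $h_{[\af]\emptyset}$ and $g_{(\af)\emptyset}$ are mutually inverse continuous maps between $X_\af$ and the open subset $X_{(\af)\emptyset}\subseteq X_\emptyset=E^0$ (so $d$ is a homeomorphism piecewise onto open sets, hence a local homeomorphism), and the continuity of $f_{\emptyset[\af]}:X_\af\to F^0$ (so $r$ is continuous piecewise, the pieces being open). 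The one step I would flag as more than bookkeeping is the continuity of $r$ at edges $e^\af_\eta$ with $f_{\emptyset[\af]}(\eta)=\emptyset$: when $\CB$ is not unital, $\emptyset$ is the point at infinity of $F^0$, and continuity there is in substance a properness assertion about the dual of $\theta_\af$ rather than a consequence of the disjoint-union reduction alone. You correctly delegate this to \cite[Lemma 3.23]{CasK1}, which is precisely what the paper does, so there is no gap; just be aware that this is where the real content sits rather than in the piecewise reduction.
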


\begin{remark} Let $E^0:=E^0_{(\CB,\CL,\theta,\CI_\alpha)}$, $F^0:=F^0_{(\CB,\CL,\theta,\CI_\alpha)}$ and $E^1:=E^1_{(\CB,\CL,\theta,\CI_\alpha)}$ be as above.
 Define a map 
 $d:E^1\to E^0$ by 
$	d(e^\alpha_\eta)=h_{[\alpha]\emptyset}(\eta)$. Put $$\operatorname{dom}(r):=\{e^\af_\eta:  \af \in \CL,~ \eta \cap \CR_\af \neq \emptyset\} \subset E^1,$$ where $\operatorname{dom}(r)=\bigcup_{\af \in \CL, A \in \eta \cap \CR_\af} Z^1(\af,A)$ is an open subset of $E^1$, and define a continuous map $r:\operatorname{dom}(r)\to E^0$ by $r(e^\alpha_\eta)=f_{\emptyset[\af]}(\eta)$. Then, $d$ is a local homeomorphism and  the map $\tilde{r}:E^1 \to F^0$ defined by 
$$\tilde{r}(e)=\left\{ 
\begin{array}{ll}
    r(e) & \hbox{if\ } e \in \operatorname{dom}(r), \\
    \infty & \hbox{if\ }e \notin \operatorname{dom}(r)
    \end{array}
\right.$$ is continuous by \cite[Proposition 7.1]{CasK1}. Thus, $(E^0,E^1, d,r)$ is a partially defined topological graph in the sense of  \cite[Definition 8.2]{Ka2021}.
\end{remark}

Given a topological correspondence $E=(E^1,d,r)$ from $E^0$ to $F^0$, we define the following subsets of $F^0$ (\cite[Section~1]{Ka2004}):
\begin{align*}F_{sce} & := F^0 \setminus \overline{r(E^1)}, \\
F^0_{fin}&:=\{v \in F^0: \exists V~\text{neighborhood of}~v ~\text{such that}~ r^{-1}(V)~\text{is compact} \},\\
F^0_{rg}&:=F^0_{fin} \setminus \overline{F^0_{sce}},\\
F^0_{sg} & :=F^0 \setminus F^0_{rg}.
\end{align*}
 We also consider the sets $E^0_{rg}=F^0_{rg}\cap E^0$ and $E^0_{sg}=F^0_{sg}\cap E^0$.

For $n \geq 2$, we denote by $E^n$ the space of paths of length $n$, that is,
\[E^{n}:=\{(e_1,\ldots,e_n)\in \prod_{i=1}^n E^1:d(e_i)=r(e_{i+1})(1\leq i<n)\}\]
which we regard as a subspace of the product space $\prod_{i=1}^n E^1$.  Define the {\it finite path space} $E^*= \sqcup_{n=0}^{\infty} E^n$ with the disjoint union topology. 
Define the {\it infinite path space} as
\[E^{\infty}:=\{(e_i)_{i\in\mathbb{N}}\in \prod_{i=1}^\infty E^1:d(e_i)=r(e_{i+1})(i\in\mathbb{N})\}.\]

For an element $(e_k)_{k=1}^n\in E^n$, we let $d((e_k)_{k=1}^n)=d(e_n)$ and $r((e_k)_{k=1}^n)=r(e_1)$ if $n\geq 1$. For $v\in E^0$, we let $r(v)=d(v)=v$. For infinite paths, we only define the range, namely, if $(e_k)_{k=1}^\infty\in E^\infty$, we define $r((e_k)_{k=1}^\infty)=r(e_1)$.

 We denote the length of a path $\mu \in E^* \sqcup E^\infty$ by $|\mu|$.
For convenience, we will usually write $e_1 \cdots e_n$ for $(e_1, \cdots, e_n) \in E^n$. For $e_1 \cdots e_n \in E^n$  and $\mu \in  E^* \sqcup E^\infty$, we write $e_1 \cdots e_n \mu$ for the concatenation of 
$e_1 \cdots e_n$  and $\mu$ in $ E^* \sqcup E^\infty$. For 
 $1\leq i\leq j\leq |\mu|$, we also denote by $\mu_{i,j}$ the sub-path $\mu_i\cdots \mu_j$ of  $\mu=\mu_1\mu_2\cdots\mu_{|\mu|}$, where $\mu_{i,i}=\mu_i$. If $j < i$, set $\mu_{i,j} =\emptyset$.

\begin{lem} \label{range of path} Let  $\mu=e^{\af_1}_{\eta_1} \cdots e^{\af_n}_{\eta_n} \in E^n$, where $1 \leq n$. Then, we have $$r(\mu)=f_{\emptyset[\af_1 \cdots \af_n]}\big(g_{(\af_1 \cdots \af_{n-1})\alpha_n}(\eta_n) \big).$$ 
\end{lem}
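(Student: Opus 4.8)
The plan is to prove the formula by induction on the length $n$ of the path, using the compatibility relations $f_{\af[\bt\gm]}=f_{\af[\bt]}\circ f_{\af\bt[\gm]}$ among the dual morphisms and the fact that $g_{(\af)\bt}$ and $h_{[\af]\bt}$ are mutually inverse. The base case $n=1$ is essentially a definition: for $\mu=e^{\af_1}_{\eta_1}$ we have $r(\mu)=f_{\emptyset[\af_1]}(\eta_1)$ by Proposition \ref{def:topological correspondence}, and since $g_{(\emptyset)\af_1}$ is the identity on $X_{\af_1}$ (by the convention stated just after the definition of $g_{(\af)\bt}$, as the empty word gives $X_{(\emptyset)\af_1}=X_{\af_1}$), the right-hand side reads $f_{\emptyset[\af_1]}(g_{(\emptyset)\af_1}(\eta_1))=f_{\emptyset[\af_1]}(\eta_1)$, so the two sides agree.

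For the inductive step, first I would unwind what it means for $\mu=e^{\af_1}_{\eta_1}\cdots e^{\af_n}_{\eta_n}$ to lie in $E^n$: the condition $d(e^{\af_i}_{\eta_i})=r(e^{\af_{i+1}}_{\eta_{i+1}})$ in particular forces $d(e^{\af_1}_{\eta_1})=r(e^{\af_2}_{\eta_2}\cdots e^{\af_n}_{\eta_n})$, i.e.\ $h_{[\af_1]\emptyset}(\eta_1)=r(\mu_{2,n})$. Writing $\mu'=\mu_{2,n}\in E^{n-1}$, the induction hypothesis gives $r(\mu')=f_{\emptyset[\af_2\cdots\af_n]}(g_{(\af_2\cdots\af_{n-1})\af_n}(\eta_n))$. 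Since $r(\mu)=r(e^{\af_1}_{\eta_1})=f_{\emptyset[\af_1]}(\eta_1)$, I want to rewrite $f_{\emptyset[\af_1]}(\eta_1)$ using this identity for $\eta_1$. The point is that $\eta_1\in X_{\af_1}$ is determined by $h_{[\af_1]\emptyset}(\eta_1)=r(\mu')$: applying $g_{(\af_1)\emptyset}$, which inverts $h_{[\af_1]\emptyset}$, we get $\eta_1=g_{(\af_1)\emptyset}(r(\mu'))=r(\mu')\cap\CI_{\af_1}$. Here one must check $r(\mu')$ lies in the domain $X_{(\af_1)\emptyset}$ of $g_{(\af_1)\emptyset}$, which holds precisely because $r(\mu')$ is in the image of $h_{[\af_1]\emptyset}$.

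Substituting, $r(\mu)=f_{\emptyset[\af_1]}\big(g_{(\af_1)\emptyset}(f_{\emptyset[\af_2\cdots\af_n]}(g_{(\af_2\cdots\af_{n-1})\af_n}(\eta_n)))\big)$, and the goal becomes the identity
\[
f_{\emptyset[\af_1]}\circ g_{(\af_1)\emptyset}\circ f_{\emptyset[\af_2\cdots\af_n]}\circ g_{(\af_2\cdots\af_{n-1})\af_n}
= f_{\emptyset[\af_1\cdots\af_n]}\circ g_{(\af_1\cdots\af_{n-1})\af_n},
\]
as maps on the appropriate open subset of $X_{\af_n}$. I expect the crux of the argument to be this composition identity, which I would establish by two sub-lemmas: (a) $f_{\emptyset[\af_1]}\circ g_{(\af_1)\emptyset}\circ f_{\emptyset[\bt]} = f_{\emptyset[\af_1\bt]}$ for $\bt=\af_2\cdots\af_n$ — this is where one uses the cocycle relation $f_{\af[\bt\gm]}=f_{\af[\bt]}\circ f_{\af\bt[\gm]}$ together with the fact that $g_{(\af_1)\emptyset}$ identifies $X_{\af_1}$ with the sub-ultrafilter inside $\CB$, so that $g_{(\af_1)\emptyset}\circ f_{\emptyset[\bt]}$ matches $f_{\af_1[\bt]}$ after the identification; and (b) a compatibility $g_{(\af_1)\emptyset}$-followed-by-$f$ against $g_{(\af_1\cdots\af_{n-1})\af_n}$, reconciling the "truncate and keep smaller ideal" maps at different levels — these are the $g$-cocycle-type relations implicit in \cite{CasK1}. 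The main obstacle is bookkeeping: carefully tracking which ultrafilter lives in which ideal $\CI_\gamma$ and verifying all the domain conditions ($r(\mu')\in X_{(\af_1)\emptyset}$, $\eta_n$ in the domain of $g_{(\af_1\cdots\af_{n-1})\af_n}$, etc.) so that every composition above is defined; once the domains are pinned down, the equality of the maps should follow formally from the relations among $f_{\af[\bt]}$, $g_{(\af)\bt}$, and $h_{[\af]\bt}$ recalled in subsection on filters in $E(S)$.
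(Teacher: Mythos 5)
Your proposal is correct and uses the same two ingredients as the paper's proof: induction on the length of the path together with the composition identities for the maps $f_{\emptyset[\cdot]}$, $g_{(\cdot)\cdot}$, $h_{[\cdot]\cdot}$ from \cite[Lemma 4.6]{CasK1}. The only genuine difference is the direction of the induction: the paper peels off the \emph{last} edge, using $h_{[\af_{n-1}]\emptyset}(\eta_{n-1})=f_{\emptyset[\af_n]}(\eta_n)$ to write $\eta_{n-1}=g_{(\af_{n-1})\emptyset}(f_{\emptyset[\af_n]}(\eta_n))$, applying the inductive hypothesis to $\mu_{1,n-1}$ and then invoking \cite[Lemma 4.6 (iii) and (iv)]{CasK1} once each; you peel off the \emph{first} edge, which forces you to prove the composite identity you display. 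That identity is true and follows from the same two facts, namely the $g$-cocycle relation $g_{(\af_1)\af_{2,n}}\circ g_{(\af_{2,n-1})\af_n}=g_{(\af_{1,n-1})\af_n}$ (valid on the right domain because $\CI_{\af_{1,n}}\subseteq\CI_{\af_{2,n}}$ and $\eta_n\cap\CI_{\af_{1,n}}\neq\emptyset$ by Lemma \ref{lem:word from path}) and the relation $f_{\emptyset[\af_1]}\circ g_{(\af_1)\emptyset}\circ f_{\emptyset[\bt]}=f_{\emptyset[\af_1\bt]}\circ g_{(\af_1)\bt}$, which one gets from $g_{(\af_1)\emptyset}\circ f_{\emptyset[\bt]}=f_{\af_1[\bt]}\circ g_{(\af_1)\bt}$ and the cocycle $f_{\emptyset[\af_1\bt]}=f_{\emptyset[\af_1]}\circ f_{\af_1[\bt]}$; so your route costs slightly more bookkeeping than the paper's but is equally valid. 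One small correction: your sub-lemma (a) as stated, $f_{\emptyset[\af_1]}\circ g_{(\af_1)\emptyset}\circ f_{\emptyset[\bt]}=f_{\emptyset[\af_1\bt]}$, equates maps with mismatched domains (the left side lives on a subset of $X_\bt$, the right side on $X_{\af_1\bt}$); the correct right-hand side is $f_{\emptyset[\af_1\bt]}\circ g_{(\af_1)\bt}$, which is what actually appears in your displayed goal identity and what you implicitly mean by ``after the identification,'' so this is a defect of formulation rather than of substance. The remaining domain checks you flag ($r(\mu_{2,n})$ lying in $X_{(\af_1)\emptyset}$, etc.) go through exactly as you indicate, since $r(\mu_{2,n})$ is in the image of $h_{[\af_1]\emptyset}$.
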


\begin{proof} We  use induction on $|\mu|=n$. The result is immediate if $|\mu|=1$. Suppose that for some integer $n \geq 2$, the result is true for paths $\mu$ with $|\mu| =n-1$.  Let  $\mu=e^{\af_1}_{\eta_1} \cdots e^{\af_{n-1}}_{\eta_{n-1}} e^{\af_n}_{\eta_n} \in E^n$ and put $\af=\af_1 \cdots \af_n$.  First,  since $h_{[\af_{n-1}]\emptyset}(\eta_{n-1})=f_{\emptyset[\af_n]}(\eta_n)$, we have $\eta_{n-1}=g_{(\af_{n-1})\emptyset} (h_{[\af_{n-1}]\emptyset}(\eta_{n-1}))=g_{(\af_{n-1})\emptyset}(f_{\emptyset[\af_n](\eta_n)})$. We then see that
\begin{align*} r(\mu)&= r(e^{\af_1}_{\eta_1} \cdots e^{\af_{n-1}}_{\eta_{n-1}}) \\
&= f_{\emptyset[\af_{1, n-1}]}\big(g_{(\af_{1, n-2})\af_{n-1}}(\eta_{n-1})\big) \\
&=f_{\emptyset[\af_{1, n-1}]}\big(g_{(\af_{1, n-2})\af_{n-1}}(g_{(\af_{n-1})\emptyset}(f_{\emptyset[\af_n]}(\eta_n)))\big) \\
&=f_{\emptyset[\af_{1, n-1}]}\big(g_{(\af_{1, n-1})\emptyset}(f_{\emptyset[\af_n]}(\eta_n))\big) \\
&= f_{\emptyset[\af_{1,n}]}(g_{(\af_{1,n-1})\alpha_n}(\eta_n)),
\end{align*}
where the second equality follows from the induction hypothesis,  the forth equality follows from \cite[Lemma 4.6 (iii)]{CasK1}, and  the last equality follows from  \cite[Lemma 4.6 (iv)]{CasK1}.
\end{proof}

\begin{dfn}(\cite[Definition 7.5]{CasK1})\label{def:boundary path space}  Let  $(\CB,\CL,\theta, \CI_\af)$ be a generalized Boolean dynamical system and $E=(E^1,d,r)$ be the associated  topological correspondence. The {\it boundary path space} of $E$ is defined by 
$$\partial E :=E^{\infty} \sqcup \{(e_k)_{k=1}^n \in E^* : d((e_k)_{k=1}^n )  \in E^0_{sg}\}.$$
We denote by $\sm_E:\partial E\setminus E^0_{sg}\to \partial E$ the shift map that removes the first edge for paths of length greater or equal to 2. For elements $\mu$ of length 1, $\sm_E(\mu)=d(\mu)$.
For a subset $S \subset E^*$, denote by 
$$\CZ(S)=\{\mu \in \partial E:~\text{either} ~r(\mu) \in S, ~\text{or there exists}~  1 \leq i \leq |\mu| ~\text{such that}~ \mu_1 \cdots \mu_i \in 
S \}.$$
We endow $\partial E$ with the topology generated by the basic open sets $\CZ(U)\cap \CZ(K)^c$, where $U$ is an open set of $E^*$ and $K$ is a compact set of $E^*$.
\end{dfn}

Note that $\partial E$ is a locally compact Hausdorff space and that $\sm_E$ is a local homeomorphism (\cite[Lemma 6.1]{KL2017}).

The following two results are frequently used throughout the paper. 

\begin{lem}\label{lem:word from path}(\cite[Lemma 7.8]{CasK1})
Let $(e^{\af_k}_{\eta_k})_{k=1}^n\in E^*$, where $1\leq n$. Then $\af_1\cdots\af_n\in\CW^*$. Moreover for all $1\leq m\leq n$, we have that $\eta_m\cap \CI_{\af_{1,m}}$ is an ultrafilter in $\CI_{\af_{1,m}}$.
\end{lem}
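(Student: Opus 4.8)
The plan is to prove both assertions simultaneously by induction on $n$, since the second assertion (that $\eta_m \cap \CI_{\af_{1,m}}$ is an ultrafilter in $\CI_{\af_{1,m}}$) is exactly what makes the maps $g_{(\af_{1,m-1})\af_m}$ of the previous subsections well-defined, and these maps are the mechanism by which the range/domain matching propagates along the path. First I would dispose of the base case $n=1$: given $e^{\af_1}_{\eta_1} \in E^1$ with $\eta_1 \in X_{\af_1}$, the condition $\CI_{\af_1} \neq \{\emptyset\}$ follows because $\eta_1$ is a nonempty filter in $\CI_{\af_1}$, so $\af_1 \in \CW^{\geq 1}$; and $\eta_1 \cap \CI_{\af_{1,1}} = \eta_1$ is an ultrafilter by definition of $X_{\af_1}$.

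For the inductive step, suppose the statement holds for paths of length $n-1$ and let $(e^{\af_k}_{\eta_k})_{k=1}^n \in E^n$. Applying the hypothesis to $(e^{\af_k}_{\eta_k})_{k=1}^{n-1}$ gives $\af_1 \cdots \af_{n-1} \in \CW^*$ and, crucially, that $\eta_{n-1} \cap \CI_{\af_{1,n-1}}$ is an ultrafilter in $\CI_{\af_{1,n-1}}$. The path condition $d(e^{\af_{n-1}}_{\eta_{n-1}}) = r(e^{\af_n}_{\eta_n})$ reads $h_{[\af_{n-1}]\emptyset}(\eta_{n-1}) = f_{\emptyset[\af_n]}(\eta_n)$, so in particular $f_{\emptyset[\af_n]}(\eta_n) \neq \emptyset$, which by definition of $f_{\emptyset[\af_n]}$ means $\eta_n \cap \CR_{\af_n} \neq \emptyset$; since $\CR_{\af_n} \subseteq \CI_{\af_n}$ and $\eta_n \in X_{\af_n}$, this forces $\CI_{\af_n} \neq \{\emptyset\}$, hence $\af_n \in \CW^{\geq 1}$. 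I would then argue that $\af_1\cdots\af_n \in \CW^*$ by showing $\CI_{\af_1 \cdots \af_n} \neq \{\emptyset\}$: unwinding the definition of $\CI_{\af_1\cdots\af_n}$ and using that $\eta_{n-1}\cap\CI_{\af_{1,n-1}}$ is a nonempty ultrafilter together with the fact that $h_{[\af_{n-1}]\emptyset}$ and $g_{(\af_{n-1})\emptyset}$ are mutually inverse (and compatible with the tower of $\theta$'s), one produces a nonzero element of $\CI_{\af_1\cdots\af_n}$; concretely, an element $A \in \eta_{n-1}\cap\CI_{\af_{1,n-1}}$ pushes forward through the $g$-maps to witness nonemptiness.

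For the last claim, I would show $\eta_n \cap \CI_{\af_{1,n}}$ is an ultrafilter in $\CI_{\af_{1,n}}$. The key is that $\CF := \eta_n \cap \CI_{\af_n}$ is an ultrafilter in $\CI_{\af_n}$ (as $\eta_n \in X_{\af_n}$), and $g_{(\af_{1,n-1})\af_n}(\CF) = \CF \cap \CI_{\af_{1,n}} = \eta_n \cap \CI_{\af_{1,n}}$ provided $\CF \in X_{(\af_{1,n-1})\af_n}$, i.e.\ $\CF \cap \CI_{\af_{1,n}} \neq \emptyset$; this nonemptiness is exactly what the previous paragraph establishes. Since $g_{(\af_{1,n-1})\af_n}$ maps $X_{(\af_{1,n-1})\af_n}$ into $X_{\af_{1,n}}$, the image is an ultrafilter, giving the claim. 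I expect the main obstacle to be the bookkeeping in the inductive step for $\af_1\cdots\af_n\in\CW^*$: one must carefully track how the defining condition of $\CI_{\af_1\cdots\af_n}$ interacts with the condition $h_{[\af_{n-1}]\emptyset}(\eta_{n-1}) = f_{\emptyset[\af_n]}(\eta_n)$, unwinding $f_{\emptyset[\af_n]}(\eta_n) = \{A \in \CB : \theta_{\af_n}(A) \in \eta_n\}$ and $h_{[\af_{n-1}]\emptyset}(\eta_{n-1}) = \uparrow_{\CB} (\eta_{n-1})$, to extract the required nonzero element of $\CI_{\af_1\cdots\af_n}$ — the rest is a routine application of the cited lemmas from \cite{CasK1}.
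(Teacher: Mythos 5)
Note first that the paper itself gives no proof of this lemma: it is quoted, with a citation, from \cite[Lemma 7.8]{CasK1}, so there is no in-paper argument to measure yours against. Judged on its own terms, your induction is correct and is the natural argument. The base case is fine, and the inductive step works once the one step you leave schematic is actually carried out; let me record it, since it is the heart of the lemma. Take $A\in\eta_{n-1}\cap\CI_{\af_{1,n-1}}$ (nonempty by the inductive hypothesis). The path condition $h_{[\af_{n-1}]\emptyset}(\eta_{n-1})=f_{\emptyset[\af_n]}(\eta_n)$ gives $A\in\uparrow_{\CB}\eta_{n-1}=f_{\emptyset[\af_n]}(\eta_n)$, i.e.\ $\theta_{\af_n}(A)\in\eta_n$; and writing $A\subseteq\theta_{\af_{2,n-1}}(B)$ with $B\in\CI_{\af_1}$ (with $\theta_{\af_{2,n-1}}=\mathrm{Id}$ when $n=2$), monotonicity of the Boolean homomorphism $\theta_{\af_n}$ together with $\theta_{\af_n}\circ\theta_{\af_{2,n-1}}=\theta_{\af_{2,n}}$ yields $\theta_{\af_n}(A)\subseteq\theta_{\af_{2,n}}(B)$, hence $\theta_{\af_n}(A)\in\eta_n\cap\CI_{\af_{1,n}}$. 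So the push-forward is through $\theta_{\af_n}$ (via the $f$/$h$ identification), not really through the $g$-maps, and this single witness does both jobs you need: it shows $\CI_{\af_{1,n}}\neq\{\emptyset\}$, so $\af_{1,n}\in\CW^*$ and $g_{(\af_{1,n-1})\af_n}$ is defined, and it shows $\eta_n\in X_{(\af_{1,n-1})\af_n}$, after which the quoted fact that $g_{(\af_{1,n-1})\af_n}$ maps into $X_{\af_{1,n}}$ gives the ultrafilter claim for $m=n$; the cases $m<n$ are the inductive hypothesis. Two cosmetic points: your detour through $\CR_{\af_n}$ to get $\af_n\in\CW^{\geq 1}$ is unnecessary, since any $\eta_n\in X_{\af_n}$ is already a nonempty filter avoiding $\emptyset$; and you should say explicitly that the element witnessing $\CI_{\af_{1,n}}\neq\{\emptyset\}$ in your second paragraph is the same element shown to lie in $\eta_n$, because your third paragraph needs membership in $\eta_n\cap\CI_{\af_{1,n}}$, not merely a nonzero element of $\CI_{\af_{1,n}}$.
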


\begin{thm}\label{thm:iso.tight.boundary}(\cite[Theorem 7.10]{CasK1})
Let $(\CB,\CL,\theta,\CI_\af)$ a generalized Boolean dynamical system, $\mathsf{T}$ the tight spectrum of its inverse semigroup and $\partial E$ the boundary path space of its topological correspondence. Then, there exists a homeomorphism $\phi:\mathsf{T}\to \partial E$ defined by 
\[\phi(\xi^\af)=\begin{cases}
\xi_0 & \text{if } \af=\emptyset, \\
(e^{\af_n}_{\eta_n})_{n=1}^{|\af|} &\text{if }|\af|\geq 1,
\end{cases}\]
where $\eta_1=\xi_1$ and $\eta_n=h_{[\af_{1,n-1}]\af_n}(\xi_n)$ for $2\leq n\leq|\af|$. 
\end{thm}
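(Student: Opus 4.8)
The plan is to prove the statement by constructing an explicit inverse to the map $\phi$ and then checking that both $\phi$ and $\phi^{-1}$ are continuous by comparing the natural bases of the two topologies. The conceptual backbone is Theorem~\ref{filter-bij-complete pair}, which identifies a filter $\xi$ in $E(S)$ with a pair $(\af,\{\xi_n\})$, where $\af\in\CW^{\le\infty}$ and $\{\xi_n\}$ is a complete family; Theorem~\ref{char:tight} then pins down exactly which such pairs correspond to tight filters. On the geometric side, a point of $\partial E$ is either an infinite path or a finite path $(e^{\af_k}_{\eta_k})_{k=1}^n$ whose domain $d(e^{\af_n}_{\eta_n})$ lies in $E^0_{sg}$. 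So the first order of business is to verify that the recipe $\xi^\af\mapsto(e^{\af_n}_{\eta_n})_{n=1}^{|\af|}$, with $\eta_1=\xi_1$ and $\eta_n=h_{[\af_{1,n-1}]\af_n}(\xi_n)$, actually produces a legitimate element of $\partial E$: consecutive edges must compose, which amounts to the identity $h_{[\af_n]\emptyset}\bigl(h_{[\af_{1,n-1}]\af_n}(\xi_n)\bigr)=f_{\emptyset[\af_{n+1}]}\bigl(h_{[\af_{1,n}]\af_{n+1}}(\xi_{n+1})\bigr)$ (with the obvious modification for $n=1$), and this should fall out of the completeness relation $\xi_n=\{A\in\CB:\theta_{\af_{n+1}}(A)\in\xi_{n+1}\}$ together with the functoriality of the dual maps recalled in Section~\ref{sec: prelim} (the cocycle identity for $f_{\af[\bt]}$ and the fact that $g_{(\af)\bt}$ and $h_{[\af]\bt}$ are mutually inverse). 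When $|\af|<\infty$ one must additionally show the endpoint lands in $E^0_{sg}$, and this is precisely where the finite-type tightness hypothesis of Theorem~\ref{char:tight}(ii) --- that $\xi_{|\af|}$ is an ultrafilter all of whose members are singular --- is consumed.

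For the inverse, start from a path $\mu=(e^{\af_k}_{\eta_k})_{k=1}^n\in\partial E$. Lemma~\ref{lem:word from path} already supplies that $\af:=\af_1\cdots\af_n\in\CW^*$ and that each $\eta_m\cap\CI_{\af_{1,m}}$ is an ultrafilter in $\CI_{\af_{1,m}}$. I would set $\xi_n:=g_{(\af_{1,n-1})\af_n}(\eta_n)=\eta_n\cap\CI_{\af_{1,n}}$ and then take $\{\xi_m\}_{0\le m\le n}$ to be the unique complete family extending it downward, equivalently $\xi_m=f_{\af_{1,m}[\af_{m+1}]}(\xi_{m+1})$; by Theorem~\ref{filter-bij-complete pair} this is a filter in $E(S)$. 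It is tight: $\xi_{|\af|}$ is an ultrafilter by Lemma~\ref{lem:word from path}, and its members are singular because $d(e^{\af_n}_{\eta_n})\in E^0_{sg}$, so Theorem~\ref{char:tight}(ii) applies. For an infinite path one argues the same way at every truncation; each $\eta_n\cap\CI_{\af_{1,n}}$ being an ultrafilter forces the resulting infinite-type filter to be non-enlargeable, hence an ultrafilter, hence tight by Theorem~\ref{char:tight}(i). That the two constructions are mutually inverse is a bookkeeping verification resting on $g_{(\af_{1,n-1})\af_n}$ and $h_{[\af_{1,n-1}]\af_n}$ being mutually inverse and on $\eta_1=\xi_1$ in the length-one case; the case $\af=\emptyset$ is immediate, $\phi$ then being the identification of a singular ultrafilter $\xi_0$ of $\CB$ with the corresponding point of $E^0_{sg}\subseteq\partial E$.

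It remains to see $\phi$ is a homeomorphism. Both $\mathsf{T}$ and $\partial E$ are locally compact Hausdorff with compact-open bases --- for $\mathsf{T}$ the sets $V_{e:e_1,\ldots,e_m}$ of Remark~\ref{basis of T} with $e=(\gm,A,\gm)$ and $e_i=(\gm^{(i)},A_i,\gm^{(i)})$ idempotents of $S$, and for $\partial E$ the sets $\CZ(U)\cap\CZ(K)^c$ of Definition~\ref{def:boundary path space}, which are in turn built from the cylinder sets $Z^1(\gm,A)\subseteq E^1$. I would show directly that $\phi$ carries $V_{e:e_1,\ldots,e_m}$ to a finite Boolean combination of such cylinder sets in $\partial E$ --- the condition $e\in\xi$ translating to ``$\mu$ passes through the cylinder determined by $(\gm,A)$'' and each $e_i\notin\xi$ to avoidance of the corresponding cylinder --- and conversely that the $\phi$-preimage of each basic open set of $\partial E$ is of the form $V_{e:\ldots}$. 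Since the basic sets on both sides are compact-open, this correspondence of bases makes $\phi$ an open continuous bijection, hence a homeomorphism.

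The step I expect to be the main obstacle is the well-definedness in the finite-type case: reconciling the abstract tightness condition of Theorem~\ref{char:tight}(ii) with the topological condition $d(e^{\af_n}_{\eta_n})\in E^0_{sg}$. This requires first computing, for the specific topological correspondence $(E^1,d,r)$ built from $(\CB,\CL,\theta,\CI_\af)$, what the sets $F^0_{sce}$, $F^0_{fin}$ and $F^0_{rg}$ are in Boolean-algebraic terms, and then matching the two notions of ``singular'' at play --- regularity of sets in $\CB$ in the sense of the earlier definition versus the correspondence-theoretic regularity of a vertex. Once this dictionary is in place, the composition identities for $f$, $g$, $h$ and the basis comparison are lengthy but routine.
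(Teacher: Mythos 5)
You should first note that the paper does not actually prove this statement: Theorem \ref{thm:iso.tight.boundary} is quoted verbatim from \cite[Theorem 7.10]{CasK1} and is used here as an imported ingredient, so there is no in-paper proof to measure your argument against. That said, your outline is built from exactly the tools this framework makes available (Theorem \ref{filter-bij-complete pair} for the filter/complete-family dictionary, Theorem \ref{char:tight} for tightness, Lemma \ref{lem:word from path} for the inverse construction, and cylinder sets for the topology), and the overall architecture is the natural one.

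As a standalone proof, however, what you have is a plan with the hardest parts still open. The step you yourself flag --- translating the condition ``$A\notin\CB_{reg}$ for all $A\in\xi_{|\af|}$'' of Theorem \ref{char:tight}(ii) into ``$d(e^{\af_n}_{\eta_n})\in E^0_{sg}$'' --- is not a routine afterthought: it requires computing $F^0_{sce}$, $F^0_{fin}$ and $F^0_{rg}$ for this specific correspondence in Boolean-algebraic terms (this is precisely the content of \cite[Lemma 7.9]{CasK1}, which the present paper also invokes later), and without it neither well-definedness of $\phi$ in the finite-type case nor tightness of your inverse construction is established. Two further points are asserted rather than proved: (a) that an infinite-type filter whose complete family consists of ultrafilters is itself an ultrafilter (``non-enlargeable'' is not an argument; this needs the ultrafilter characterization for infinite-type filters from \cite{CasK1}); and (b) the continuity/openness step, where you would have to show directly that the sets $\CN(\af,A)\setminus\bigl(\CN(\af_1,A_1)\cup\cdots\cup\CN(\af_m,A_m)\bigr)$ form a basis for the topology of Definition \ref{def:boundary path space} (generated by $\CZ(U)\cap\CZ(K)^c$) --- note you cannot appeal to Remark \ref{basis of partial E} or to Lemma \ref{compact open set} here, since both are derived from the very homeomorphism you are trying to construct, so your basis comparison as stated is circular unless this is proved independently. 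Until these three items are supplied, the proposal is an accurate roadmap but not a proof.
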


\begin{remark}\label{basis of partial E}
It follows immediately from Theorem \ref{thm:iso.tight.boundary} and Remark \ref{basis of T}, that $\partial E$ has a basis of compact-open sets.
\end{remark}

By Lemma \ref{lem:word from path},  we have a map $\CP:\partial E\to\CW^{\leq\infty}$ that sends a path $(e^{\af_k}_{\xi_k})_{k=1}^N$, where $1\leq N\leq\infty$, to $(\af_k)_{k=1}^N$ and an element of $E^0$ to $\emptyset$.
For $\alpha=\af_1 \cdots \af_{|\af|} \in\mathcal{W}^{\geq 1}$ and   $A  \in \CI_\af$, we then define the cylinder set by 
\begin{align*}\CN(\af,A)&:=\{(e_{\eta_n}^{\beta_n}) \in \partial E: \CP((e_{\eta_n}^{\beta_n}))_{1,|\af|}=\af ~\text{and}~   A\in \eta_{|\alpha|}\}\\
&=\{(e_{\eta_n}^{\beta_n}) \in \partial E: \bt_1 \cdots \bt_{|\af|}=\af ~\text{and}~   A\in \eta_{|\alpha|}\}.
\end{align*}
Also, for  $A\in\CB$, we define 
\begin{align*}\CN(\emptyset,A):=\{\mu \in\partial E: A\in r(\mu)\} .
\end{align*}
Note that if $\af\in\CW^*$ and $A=\emptyset$, then $\CN(\af,A)=\emptyset$.

\begin{lem}\label{compact open set} For $\alpha\in\mathcal{W}^*$ and $A \in \CI_\af$,  the sets $\CN(\af,A)$
are compact-open sets in $\partial E$.
\end{lem}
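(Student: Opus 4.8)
The plan is to pull $\CN(\af,A)$ back through the homeomorphism $\phi:\mathsf{T}\to\partial E$ of Theorem~\ref{thm:iso.tight.boundary} and recognize it as one of the basic compact-open subsets of the tight spectrum $\mathsf{T}$ described in Remark~\ref{basis of T}. If $A=\emptyset$ then $\CN(\af,A)=\emptyset$, which is trivially compact-open, so assume $\emptyset\neq A\in\CI_\af$; then $e:=(\af,A,\af)$ is an idempotent of the inverse semigroup $S$. I claim that
\[
\CN(\af,A)=\phi\bigl(\{\xi\in\mathsf{T}:e\in\xi\}\bigr),
\]
and that $\{\xi\in\mathsf{T}:e\in\xi\}$ is exactly a set of the form $V_{e:e_1\ldots,e_n}$ of Remark~\ref{basis of T} with $n=0$, hence compact-open; since $\phi$ is a homeomorphism, this finishes the proof.

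To prove the claim I would fix $\xi^\bt\in\mathsf{T}$ with its complete family $\{\xi_n\}$ and show that $\phi(\xi^\bt)\in\CN(\af,A)$ if and only if $(\af,A,\af)\in\xi^\bt$. Recall that $\xi_n=\{C\in\CB:(\bt_{1,n},C,\bt_{1,n})\in\xi^\bt\}$ and that, by Theorem~\ref{filter-bij-complete pair} together with the filter axioms and the description of the order on $E(S)$, every idempotent of $\xi^\bt$ has the form $(\bt_{1,m},C,\bt_{1,m})$ with $m\le|\bt|$; hence $(\af,A,\af)\in\xi^\bt$ is equivalent to the conjunction ``$\af=\bt_{1,|\af|}$ and $A\in\xi_{|\af|}$''. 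It then remains to match this with membership in $\CN(\af,A)$. For $|\af|\ge 1$, writing $\phi(\xi^\bt)=(e^{\bt_n}_{\eta_n})_n$, the definition of $\CN(\af,A)$ requires $\bt_{1,|\af|}=\af$ and $A\in\eta_{|\af|}$; since $\eta_{|\af|}$ and $\xi_{|\af|}$ are related by the mutually inverse maps $h_{[\af_{1,|\af|-1}]\af_{|\af|}}$ and $g_{(\af_{1,|\af|-1})\af_{|\af|}}$, one has $\xi_{|\af|}=\eta_{|\af|}\cap\CI_\af$, and therefore $A\in\eta_{|\af|}\iff A\in\xi_{|\af|}$ because $A\in\CI_\af$ (the case $|\af|=1$ being the trivial instance $\eta_1=\xi_1$). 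For $\af=\emptyset$, I would first check that $r(\phi(\xi^\bt))=\xi_0$: this is immediate if $\bt=\emptyset$, and for $|\bt|\ge 1$ it follows from $r(\phi(\xi^\bt))=f_{\emptyset[\bt_1]}(\xi_1)=\{C\in\CB:\theta_{\bt_1}(C)\in\xi_1\}$ combined with the completeness relation $\xi_0=\{C\in\CB:\theta_{\bt_1}(C)\in\xi_1\}$; hence $\phi(\xi^\bt)\in\CN(\emptyset,A)\iff A\in\xi_0\iff(\emptyset,A,\emptyset)\in\xi^\bt$.

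The step I expect to be the main obstacle is purely bookkeeping: keeping the various dual maps $f_{\emptyset[\cdot]}$, $g_{(\cdot)\cdot}$, $h_{[\cdot]\cdot}$ and the indexing of the complete family straight, and in particular nailing down the identity $\xi_{|\af|}=\eta_{|\af|}\cap\CI_\af$, which is precisely what lets the condition ``$A\in\eta_{|\af|}$'' in the definition of $\CN(\af,A)$ be traded for the semigroup-level condition $(\af,A,\af)\in\xi^\bt$. Once $\CN(\af,A)=\phi(\{\xi\in\mathsf{T}:e\in\xi\})$ is in hand, compactness and openness are free from Remark~\ref{basis of T} and the fact that $\phi$ is a homeomorphism.
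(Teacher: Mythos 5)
Your proposal is correct and follows essentially the same route as the paper: the paper's proof also disposes of $A=\emptyset$ separately and otherwise simply notes that $\phi(V_{(\af,A,\af)})=\CN(\af,A)$ under the homeomorphism of Theorem~\ref{thm:iso.tight.boundary}, so compactness and openness come from Remark~\ref{basis of T}. You merely spell out the identification $\xi_{|\af|}=\eta_{|\af|}\cap\CI_\af$ and the $\af=\emptyset$ case that the paper leaves to the reader.
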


\begin{proof} Let $\phi: \mathsf{T} \to \partial E$ be the homeomorphism defined in Theorem \ref{thm:iso.tight.boundary}. Then for $\af \in \mathcal{W}^*$ and $A \in \CI_\af \setminus\{\emptyset\}$, one can see that $\phi(V_{(\af,A,\af)})= \CN(\af,A)$. Thus, $\CN(\af,A)$ is a  compact open set in $\partial E$. If $A=\emptyset$, then $\CN(\af,A)=\emptyset$ is compact-open.
 \end{proof}

\begin{lem}\label{basis} For $\af, \bt \in \CW^*$ and $A \in \CI_\af \setminus \{\emptyset\}, B \in \CI_\bt \setminus \{\emptyset\}$, we have 
$$\CN(\af,A) \cap \CN(\bt,B)=
\left\{
   \begin{array}{ll}
    \CN(\af, \theta_{\af'}(B) \cap A)   & \hbox{if\ }  \af=\bt\af',   \\
    \CN(\bt, \theta_{\bt'}(A) \cap B)  & \hbox{if\ } \bt=\af\bt', \\
     \CN(\af, A \cap B) & \hbox{if\ }  \af=\bt,  \\
      \emptyset & \hbox{otherwise.}
   \end{array}
\right.$$ 
\end{lem}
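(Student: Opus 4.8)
The plan is to prove the four cases by directly unwinding the definition of the cylinder sets $\CN(\af,A)$ in terms of the map $\CP:\partial E\to\CW^{\leq\infty}$ and the ultrafilter $\eta_{|\gm|}$ attached to a path $(e^{\gm_k}_{\eta_k})$. Fix a path $\mu=(e^{\gm_k}_{\eta_k})\in\partial E$. If $\mu\in\CN(\af,A)\cap\CN(\bt,B)$, then $\CP(\mu)$ has $\af$ as an initial segment of length $|\af|$ and $\bt$ as an initial segment of length $|\bt|$; hence $\af$ and $\bt$ are comparable as words, which already forces one of the three non-trivial cases (or the empty one when they are incomparable). So the heart of the argument is to handle the case $\af=\bt\af'$ (the case $\bt=\af\bt'$ being symmetric, and $\af=\bt$ being the special instance $\af'=\emptyset$).

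For the case $\af=\bt\af'$: here $|\af|\geq|\bt|$, so membership in $\CN(\bt,B)$ is a condition on $\eta_{|\bt|}$ while membership in $\CN(\af,A)$ is a condition on $\eta_{|\af|}$. First I would recall from Lemma \ref{lem:word from path} that $\eta_{|\bt|}\cap\CI_{\gm_{1,|\bt|}}=\eta_{|\bt|}\cap\CI_\bt$ is an ultrafilter in $\CI_\bt$, and likewise $\eta_{|\af|}\cap\CI_\af$ is an ultrafilter in $\CI_\af$. The key point is the relation between these two ultrafilters: the completeness of the filter family (Theorem \ref{filter-bij-complete pair}), transported along the homeomorphism $\phi:\mathsf T\to\partial E$ of Theorem \ref{thm:iso.tight.boundary}, says precisely that $\eta_{|\bt|}$ is determined by $\eta_{|\af|}$ via $A'\in\eta_{|\bt|}\iff\theta_{\af'}(A')\in\eta_{|\af|}$ (using $\gm_{|\bt|+1}\cdots\gm_{|\af|}=\af'$ on this path). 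Consequently $B\in\eta_{|\bt|}\iff\theta_{\af'}(B)\in\eta_{|\af|}$, and since $\eta_{|\af|}\cap\CI_\af$ is an ultrafilter, $\theta_{\af'}(B)\in\eta_{|\af|}$ and $A\in\eta_{|\af|}$ together are equivalent to $\theta_{\af'}(B)\cap A\in\eta_{|\af|}$. This yields exactly $\mu\in\CN(\af,\theta_{\af'}(B)\cap A)$, and conversely. One should also check the degenerate subtlety that $\theta_{\af'}(B)\cap A$ may be $\emptyset$, in which case $\CN(\af,\theta_{\af'}(B)\cap A)=\emptyset$ and indeed no path can lie in the intersection; this is consistent because an ultrafilter never contains $\emptyset$.

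I expect the main obstacle to be bookkeeping rather than conceptual: one must be careful that $\theta_{\af'}(B)\in\CI_\af$ (so that $\CN(\af,\theta_{\af'}(B)\cap A)$ is a legitimate cylinder), which follows from $B\in\CI_\bt$ together with the definition $\CI_\af=\{A\in\CB:A\subseteq\theta_{\af'_2\cdots\af'_m}(C)\text{ for some }C\in\CI_{\bt\af'_1}\}$ unwound appropriately — really this is the statement that $\theta_{\af'}$ maps $\CI_\bt$ into $\CI_{\bt\af'}=\CI_\af$, which is \cite[Lemma 3.21]{CasK1}. The other place to be cautious is matching indices: on a path of $\partial E$ with $\CP(\mu)$ long enough, $\gm_{|\bt|+j}=\af'_j$, and the completeness relation is applied one letter at a time and composed, so strictly one inducts on $|\af'|$ or simply invokes the composed form $\theta_{\af'}=\theta_{\af'_m}\circ\cdots\circ\theta_{\af'_1}$ together with the corresponding composition of the single-step completeness relations. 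Once these points are pinned down, the "otherwise" case is immediate: if neither $\af$ nor $\bt$ is an initial segment of the other, no path can have both as initial segments of $\CP(\mu)$, so the intersection is empty.
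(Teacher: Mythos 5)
Your proposal is correct, and its skeleton matches the paper's: reduce to the case $\af=\bt\af'$, establish the pivotal equivalence $B\in\eta_{|\bt|}\iff\theta_{\af'}(B)\in\eta_{|\af|}$ for $B\in\CI_\bt$, and then finish with the filter property of $\eta_{|\af|}$ (being an ultrafilter is not needed for this last step, only that it is a filter, which is all the paper uses). Where you differ is in how the pivotal equivalence is obtained. The paper stays entirely inside $\partial E$: it applies Lemma \ref{range of path} to the tail $\mu_{|\bt|+1}\cdots\mu_{|\af|}$, so that the adjacency condition $d(\mu_{|\bt|})=r(\mu_{|\bt|+1})$ becomes the identity $h_{[\bt_{|\bt|}]\emptyset}(\eta_{|\bt|})=f_{\emptyset[\af']}\bigl(g_{(\af_{1,|\af'|-1})\af'_{|\af'|}}(\eta_{|\af|})\bigr)$, from which the equivalence is read off. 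You instead pull the path back through the homeomorphism $\phi:\mathsf{T}\to\partial E$ of Theorem \ref{thm:iso.tight.boundary} and invoke the completeness relation of the associated filter family (Theorem \ref{filter-bij-complete pair}), composed letter by letter, together with the translation $\xi_n=\eta_n\cap\CI_{\gm_{1,n}}$ between the edge ultrafilters and the complete family. Both routes are legitimate applications of the machinery of \cite{CasK1}; yours makes the conceptual content more transparent (the completeness relation \emph{is} the desired equivalence, modulo the observation that for $B\in\CI_\bt$ one has $B\in\eta_{|\bt|}$ iff $B\in\xi_{|\bt|}$, and likewise at level $|\af|$ since $\theta_{\af'}(B)\in\CI_\af$ by \cite[Lemma 3.21]{CasK1}), while the paper's is lighter, needing only the range formula already proved in Lemma \ref{range of path} rather than the full tight-spectrum correspondence. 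Your handling of the remaining cases (symmetry, $\af=\bt$ as $\af'=\emptyset$, emptiness when $\af$ and $\bt$ are incomparable, and the degenerate case $\theta_{\af'}(B)\cap A=\emptyset$) is correct and in fact slightly more explicit than the paper's ``Similarly, we have the others.''
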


\begin{proof} If $ \af=\bt\af'$, then for 
$\mu =(e^{\mu_i}
_{\eta_i}) \in \partial E$ such that $\CP(\mu)_{1,|\af|}=\af$,  we have  
$$h_{[\bt_{|\bt|}]\emptyset}(\eta_{|\bt|})=f_{\emptyset[\af']}(g_{(\af_{1, |\af'|-1})\af'_{|\af'|}}(\eta_{|\bt\af'|}))$$
by Lemma \ref{range of path}. 
So, for $B \in \CI_\bt$, one can see that $B \in \eta_{|\bt|} \iff \theta_{\af'}(B) \in \eta_{|\bt\af'|}. $
Then, since $\eta_{|\af|}$ is a filter, $A, \theta_{\af'}(B) \in \eta_{|\af|}  $ if and only if   $\theta_{\af'}(B) \cap A \in \eta_{|\af|} $. Thus, we can conclude that $\CN(\af,A) \cap \CN(\bt,B)=\CN(\af, \theta_{\af'}(B) \cap A) $.  Similarly, we have the others.
\end{proof}

\begin{lem}\label{union of open sets}
For $\af\in\CW^*$ and $A_1,\ldots,A_n \in \CI_\af \setminus \{\emptyset\}$, we have 
$$\bigcup_{i=1}^n\CN(\af,A_i)= \CN\left(\af,\bigcup_{i=1}^n A_i\right).$$
\end{lem}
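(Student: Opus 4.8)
The plan is to decide membership path by path: fixing $\mu\in\partial E$, I want to show that $\mu$ lies in $\bigcup_{i=1}^n\CN(\af,A_i)$ if and only if it lies in $\CN(\af,\bigcup_{i=1}^nA_i)$. First note that $\bigcup_{i=1}^nA_i$ is a nonempty element of $\CI_\af$, since $\CI_\af$ is an ideal and hence closed under finite unions, so the right-hand cylinder set is well defined. It would in fact be enough to treat $n=2$, since the recursion $\bigcup_{i=1}^n\CN(\af,A_i)=\bigl(\bigcup_{i=1}^{n-1}\CN(\af,A_i)\bigr)\cup\CN(\af,A_n)$ together with the analogous identity $\bigcup_{i=1}^nA_i=\bigl(\bigcup_{i=1}^{n-1}A_i\bigr)\cup A_n$ reduces the general statement to that case by induction; but the argument below handles all $n$ at once. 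One could alternatively transport everything through the homeomorphism $\phi:\mathsf T\to\partial E$ of Theorem~\ref{thm:iso.tight.boundary} and argue with tight filters in $E(S)$, noting that $\{(\af,A_i,\af)\}_i$ is a cover of $(\af,\bigcup_iA_i,\af)$, but the direct computation is shorter.

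For the main case $\af\in\CW^{\geq1}$, write $\mu=(e^{\bt_k}_{\eta_k})$. Unwinding the definition of the cylinder sets, $\mu\in\bigcup_i\CN(\af,A_i)$ holds precisely when $\CP(\mu)_{1,|\af|}=\af$ (in particular $\mu$ has at least $|\af|$ edges, so $\eta_{|\af|}$ makes sense) and $A_i\in\eta_{|\af|}$ for some $i$, whereas $\mu\in\CN(\af,\bigcup_iA_i)$ holds precisely when $\CP(\mu)_{1,|\af|}=\af$ and $\bigcup_iA_i\in\eta_{|\af|}$. Thus everything reduces to the following: assuming $\CP(\mu)_{1,|\af|}=\af$, show that $\bigcup_iA_i\in\eta_{|\af|}$ iff $A_i\in\eta_{|\af|}$ for some $i$. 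The key input is Lemma~\ref{lem:word from path}, which says that $\eta_{|\af|}\cap\CI_\af$ is an ultrafilter --- equivalently, a prime filter --- in the Boolean algebra $\CI_\af$. Since $A_1,\dots,A_n$ and $\bigcup_iA_i$ all belong to $\CI_\af$, membership in $\eta_{|\af|}$ coincides for each of them with membership in $\eta_{|\af|}\cap\CI_\af$; and for a prime filter, a finite union is a member exactly when one of its terms is. This settles the case $\af\in\CW^{\geq1}$.

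The case $\af=\emptyset$ is analogous but uses $\CN(\emptyset,A)=\{\mu\in\partial E:A\in r(\mu)\}$ with $A_1,\dots,A_n\in\CB\setminus\{\emptyset\}$. The point that needs care is that $r(\mu)$ takes values in $X_\emptyset\cup\{\emptyset\}$: if $r(\mu)$ is the point $\emptyset$ at infinity, then $A\in r(\mu)$ fails for every $A\in\CB$, so $\mu$ lies in neither side and there is nothing to prove; and if $r(\mu)$ is an ultrafilter in $\CB$, it is a prime filter and the same finite-union argument applies. No step here is a genuine obstacle --- the whole content is that a finite union belongs to an ultrafilter exactly when one of the pieces does --- so the only things to watch are keeping track of which Boolean algebra the relevant ultrafilter lives in (which is why one intersects with $\CI_\af$) and not overlooking the point at infinity in the $\af=\emptyset$ case.
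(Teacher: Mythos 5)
Your proof is correct and takes essentially the same route as the paper: both reduce the identity to the fact that a finite union lies in an ultrafilter (equivalently, a prime filter) exactly when one of its terms does. The only cosmetic differences are that the paper applies this directly to $\eta_{|\af|}$ viewed as an ultrafilter in $\CI_{\af_{|\af|}}$, while you pass to $\eta_{|\af|}\cap\CI_\af$ via Lemma~\ref{lem:word from path}, and you spell out the $\af=\emptyset$ case (including $r(\mu)=\emptyset$), which the paper leaves implicit.
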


\begin{proof}
This follows immediately from the fact that for an ultrafilter $\eta$ of $\CI_{\af_{|\af|}}$, we have that $\bigcup_{i=1}^n A_i\in\eta$ if and only if $A_i\in\eta$ for some $i=1,\ldots,n$.
\end{proof}

\subsection{A partial action on $\partial E$}
Let $\mathbb{F}$ be the free group generated by $\CL$. We identify the identity of $\mathbb{F}$ with $\emptyset$ and note that we can see $\CW^*$ as a subset of $\mathbb{F}$. 
To define a partial action of $\mathbb{F}$ on $\partial E$, we first
define  the following sets:

\begin{itemize}
\item $U_\emptyset=\partial E$;
\item for $\af, \bt \in \CW^{\geq 1}$  such that $\CI_\af \cap \CI_\bt \neq \emptyset$, let
\[ U_{\af\bt^{-1}}  =\{(e^{\gm_k}_{\eta_k})_{k = 1}^{N}\in\partial E :\gm_1 \cdots \gm_{|\af|}=\af \text{ and }\eta_{|\af|}\cap\CI_{\bt}\neq\emptyset \};\]
 \item for $\af \in \CW^{\geq 1}$, let \[U_{\af}:=U_{\af\emptyset}=\{(e^{\gm_k}_{\eta_k})_{k=1}^N\in\partial E : \gm_1 \cdots \gm_{|\af|}=\af \};\]
   \item for $\bt   \in \CW^{\geq 1}$, let \[ U_{\bt^{-1}}:=U_{\emptyset \bt^{-1}} =\{\mu \in\partial E : r(\mu) \cap \CI_\bt \neq \emptyset\};\]
\item for all the other elements $\gm \in \mathbb{F}$, let $U_\gm =\emptyset$. 
   \end{itemize}

\begin{lem} \label{open sets} For $\af, \bt \in \CW^{*}$, the set $U_{\af\bt^{-1}}$ is an open set in $\partial E$.
\end{lem}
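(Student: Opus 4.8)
The plan is to check openness case by case according to the definition of $U_{\af\bt^{-1}}$, reducing everything to the basic cylinder sets $\CN(\gm, A)$, which are compact-open by Lemma \ref{compact open set}. The only genuinely non-trivial case is $U_{\af\bt^{-1}}$ for $\af,\bt \in \CW^{\geq 1}$ with $\CI_\af \cap \CI_\bt \neq \emptyset$; the cases $U_\emptyset = \partial E$, $U_\af = \CN(\af, 1_\CB)$ when $\CB$ is unital (and a union of $\CN(\af,A)$ otherwise), and $U_{\bt^{-1}}$ are either immediate or handled the same way as the main case with $\af = \emptyset$. For all remaining $\gm \in \mathbb{F}$ we have $U_\gm = \emptyset$, which is trivially open.

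For the main case, I would rewrite the condition $\eta_{|\af|} \cap \CI_\bt \neq \emptyset$ in terms of membership of specific sets. Observe that an ultrafilter $\eta_{|\af|}$ in $\CI_{\af_{|\af|}}$ (recall $\eta_{|\af|} \cap \CI_{\af}$ is an ultrafilter in $\CI_\af$ by Lemma \ref{lem:word from path}) meets $\CI_\bt$ iff it contains some nonzero element of $\CI_\af \cap \CI_\bt$. Thus
\[
U_{\af\bt^{-1}} = \bigcup_{A \in (\CI_\af \cap \CI_\bt)\setminus\{\emptyset\}} \CN(\af, A),
\]
since $\mu = (e^{\gm_k}_{\eta_k}) \in \CN(\af,A)$ precisely when $\gm_1\cdots\gm_{|\af|} = \af$ and $A \in \eta_{|\af|}$. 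Each $\CN(\af,A)$ is open by Lemma \ref{compact open set}, so the union is open. (If $\CB$ — hence $\CI_\af \cap \CI_\bt$ — happens to be unital one could take a single set, but the union form works in general and avoids a case split.) The case $\af = \emptyset$ gives $U_{\bt^{-1}} = \bigcup_{A \in \CI_\bt \setminus \{\emptyset\}} \CN(\emptyset, A)$, where $\CN(\emptyset,A) = \{\mu : A \in r(\mu)\}$ is open since $r$ is continuous and $Z(A)$ is open in $F^0$; and the case $\bt = \emptyset$ gives $U_\af = \bigcup_{A \in \CI_\af \setminus \{\emptyset\}}\CN(\af, A)$.

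The main (mild) obstacle is simply making sure the set-theoretic identity above is correctly stated and that one is entitled to use the ultrafilter characterization: the point is that for an ultrafilter $\CF$ in a Boolean algebra and an ideal $\CJ$, we have $\CF \cap \CJ \neq \emptyset$ iff $A \in \CF$ for some nonzero $A \in \CJ$, which is immediate. One must also be slightly careful that in the definition of $U_{\af\bt^{-1}}$ the relevant ultrafilter is $\eta_{|\af|}$ viewed appropriately — but by Lemma \ref{lem:word from path}, $\eta_{|\af|} \cap \CI_{\af_{1,|\af|}}$ is an ultrafilter in $\CI_\af$, and $\eta_{|\af|} \cap \CI_\bt \neq \emptyset$ is equivalent to $(\eta_{|\af|} \cap \CI_\af) \cap \CI_\bt \neq \emptyset$ since $\CI_\af \cap \CI_\bt \subseteq \CI_\af$. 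So the argument goes through, and the proof is short once the cylinder-set reformulation is in place.
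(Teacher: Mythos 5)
Your proof is correct and follows essentially the same route as the paper: each $U_{\af\bt^{-1}}$ is expressed as a union of cylinder sets $\CN(\cdot,\cdot)$, which are open by Lemma \ref{compact open set} (the paper simply indexes the union by $A\in\CI_\bt$, resp.\ $\CI_\af$, $\CB$, rather than by $(\CI_\af\cap\CI_\bt)\setminus\{\emptyset\}$). Your parenthetical remarks about $\CN(\af,1_\CB)$ and unitality of $\CI_\af\cap\CI_\bt$ are not quite right ($1_\CB$ need not lie in $\CI_\af$, and an ideal of a unital Boolean algebra need not have a top element), but you explicitly do not rely on them, so the argument stands.
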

 
 \begin{proof} $U_{\emptyset}=\partial E$ is open. For $\af,\bt \in \CW^{\geq 1}$, it is easy to see that   $U_{\af\bt^{-1}} = \bigcup_{A \in \CI_\bt} \CN(\af, A)$,  $U_\af=U_{\af \emptyset}= \bigcup_{A \in \CB} \CN(\af, A) $, and   $U_{\beta^{-1}}=U_{\emptyset\beta^{-1}}=\bigcup_{A\in\CI_{\beta}}\CN(\emptyset,A)$. Thus, $U_{\af\bt^{-1}}$  is an open set in $\partial E$ for $\af, \bt \in \CW^{*}$. 
 \end{proof}
 
 \begin{lem}\label{U af closed}
 For $\af\in\CW^*$, $U_\af$ is closed in $\partial E$.
 \end{lem}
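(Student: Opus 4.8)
The plan is to show that $\partial E\setminus U_\af$ is open. If $\af=\emptyset$ there is nothing to do, since $U_\emptyset=\partial E$, so I assume $\af\in\CW^{\geq 1}$ and set $n=|\af|$. Fix $\mu=(e^{\gm_k}_{\eta_k})_{k=1}^{N}\in\partial E\setminus U_\af$; the goal is to find an open neighbourhood of $\mu$ disjoint from $U_\af$. Let $j$ be the least index with $1\leq j\leq\min(N,n)$ and $\gm_j\neq\af_j$, if such an index exists. Two cases arise, and I expect the second one to contain all the real work, reducing it to a separation statement about finite boundary paths.

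If such a $j$ exists, put $\bt=\gm_1\cdots\gm_j$. By Lemma~\ref{lem:word from path} we have $\bt\in\CW^{\geq 1}$; it has length $j$ and disagrees with $\af_{1,j}$, so neither word is an initial segment of the other. Then $\mu\in U_\bt$, the set $U_\bt$ is open by Lemma~\ref{open sets}, and $U_\bt\cap U_\af=\emptyset$, because any path in $U_\af$ has initial word $\af_{1,j}$ while any path in $U_\bt$ has initial word $\bt$. If no such $j$ exists, then $\gm_{1,\min(N,n)}=\af_{1,\min(N,n)}$; were $N\geq n$ this would give $\gm_{1,n}=\af$, i.e.\ $\mu\in U_\af$, a contradiction, so $N<n$, $\gm_{1,N}=\af_{1,N}$, and $\mu$ is a finite path with $d(\mu)\in E^0_{sg}$. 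Put $m=N$.

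In this second case I would use $U_\af\subseteq U_{\af_{1,m+1}}$ and reduce to finding a neighbourhood of $\mu$ avoiding $U_{\af_{1,m+1}}$. Since $\af_{1,m}\in\CW^{*}$, the set $U_{\af_{1,m}}$ is open (Lemma~\ref{open sets}) and contains $\mu$. The shift $\sm_E^{m}$ is defined and continuous on $U_{\af_{1,m}}$ (every element there has length $\geq m$, and the intermediate shifts $\sm_E^{j}$ for $j<m$ have length $\geq 1$, hence avoid $E^0_{sg}$), it satisfies $\sm_E^{m}(\mu)=d(\mu)\in E^0$, and a direct check gives $U_{\af_{1,m+1}}=(\sm_E^{m}|_{U_{\af_{1,m}}})^{-1}(U_{\af_{m+1}})$. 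Because $U_{\af_{1,m}}$ is open and contains both $\mu$ and $U_{\af_{1,m+1}}$, closures in $\partial E$ and in $U_{\af_{1,m}}$ agree on $U_{\af_{1,m}}$, so continuity of $\sm_E^{m}$ shows that $\mu\in\overline{U_{\af_{1,m+1}}}$ would force $d(\mu)\in\overline{U_{\af_{m+1}}}$. Hence the whole lemma comes down to the sub-claim that, for every $\gm\in\CL$, $E^0\cap\overline{U_\gm}=\emptyset$. To prove this I would take $v\in E^0\cap\partial E$, pick any $A\in v$, and use that $\CN(\emptyset,A)$ is an open neighbourhood of $v$ (Lemma~\ref{compact open set}). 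A path $\nu\in U_\gm$ has first edge $e^\gm_{\eta_1}$, and $A\in r(\nu)$ holds iff $\theta_\gm(A)\in\eta_1$; hence $\CN(\emptyset,A)\cap U_\gm=\CN(\gm,\theta_\gm(A))$ (which is $\emptyset$ when $\theta_\gm(A)=\emptyset$). Now $\theta_\gm(A)\in\CR_\gm\subseteq\CI_\gm$, so $\CN(\gm,\theta_\gm(A))$ is compact-open (Lemma~\ref{compact open set}), or empty, and in either case it is closed since $\partial E$ is Hausdorff. Therefore $\CN(\emptyset,A)\setminus\CN(\gm,\theta_\gm(A))$ is open, contains $v$ (a path of length $0$ never lies in $\CN(\gm,\cdot)$), and is disjoint from $U_\gm$, which proves the sub-claim and with it the lemma.

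The step I expect to be the main obstacle is exactly this sub-claim: a finite boundary path must be separated from every strict prolongation of it, which is the phenomenon that the singular-vertex condition in the definition of $\partial E$ is built to accommodate. Its computational heart is the identity $\CN(\emptyset,A)\cap U_\gm=\CN(\gm,\theta_\gm(A))$, which realises the offending intersection as a single compact-open — hence closed — set that can simply be removed from a basic neighbourhood of $v$; once that identity is in hand, the rest of the argument (the case split and the $\sm_E^{m}$ reduction) is routine.
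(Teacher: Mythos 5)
Your proof is correct and follows essentially the same route as the paper's: the same two-case split, with the first case handled by an open set determined by the first disagreeing letter, and the second by removing a compact-open (hence closed) cylinder of the next letter from a basic neighbourhood, using that $B$ lies in the level-$n$ filter iff $\theta_{\af_{n+1}}(B)$ lies in the level-$(n+1)$ filter. The only difference is cosmetic: the paper subtracts $\CN(\af_{1,n+1},\theta_{\af_{n+1}}(B))$ from $\CN(\af_{1,n},B)$ directly at level $n$, whereas you first push $\mu$ down to $E^0$ via $\sm_E^{m}$ and perform the same subtraction there.
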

 
 \begin{proof} 
  $U_{\emptyset}=\partial E$ is closed. For $\af\in\CW^{\geq 1}$, we show that $\partial E\setminus U_{\af}$ is open. Let $\mu\in \partial E\setminus U_{\af}$ and $\beta=\CP(\mu)$. Suppose first that there exists $1\leq i\leq |\af|$ such that $\beta_i\neq\af_i$. Recall that $\mu_i=e^{\bt_i}_{\eta_i}$ for some  ultrafilter $\eta_i$ in $\CI_{\bt_i}$. In this case for $B\in\eta_i$, we have that $\mu\in\CN(\beta_{1,i},B)\subseteq\partial E\setminus U_{\af}$. The second and final case is when $\beta=\af_{1,n}$ for some $0\leq n<|\af|$. Let $B$ be such that $\mu\in\CN(\af_{1,n},B)$. By Lemma \ref{compact open set}, $\CN(\af_{1,n},B)\setminus\CN(\af_{1,n+1},\theta_{\af_{n+1}}(B))$ is open and note that $\mu\in \CN(\af_{1,n},B)\setminus\CN(\af_{1,n+1},\theta_{\af_{n+1}}(B))\subseteq \partial E\setminus U_{\af}$. In all cases, we have proved that $\mu$ is an interior point of $\partial E\setminus U_{\af}$ so that $U_{\af}$ is closed.
 \end{proof}

\begin{lem}\label{adding a path} Let $\xi \in X_{\emptyset}$ such that  $\xi \cap \CI_\af \neq \emptyset$ for some $\af \in \CW^*$. Define 
\begin{align*} \eta_n &:=\xi \cap \CI_{\af_n} , \\
\eta_{n-1}&:=f_{\emptyset[\af_n]}(\eta_n) \cap \CI_{\af_{n-1}}  , \\
\eta_{n-2}&:=f_{\emptyset[\af_{n-1}]}(\eta_{n-1}) \cap \CI_{\af_{n-2}}, \\
& \vdots \\
\eta_2 &:=f_{\emptyset[\af_3]}(\eta_3) \cap \CI_{\af_2},\\
\eta_1&:=f_{\emptyset[\af_2]}(\eta_2) \cap \CI_{\af_1}. 
\end{align*}
Then we have  $\emptyset \neq \eta_i \in X_{\af_i}$ for  $i=1, \cdots, n$.
\end{lem}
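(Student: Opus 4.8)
The plan is to establish the three assertions simultaneously by downward induction on $k$: for $k=n,n-1,\dots,1$ I will show that $\eta_k\neq\emptyset$, that $\eta_k\in X_{\af_k}$, and — as a reinforcement of the inductive hypothesis without which the argument does not close — that $\eta_k\cap\CI_{\af_{1,k}}\neq\emptyset$. The one ingredient used repeatedly is the inclusion $\CI_\gm\subseteq\CI_{\gm_m}$ for any word $\gm=\gm_1\cdots\gm_m\in\CL^{\geq 1}$: if $m\geq 2$, then any $A\in\CI_\gm$ satisfies $A\subseteq\theta_{\gm_2\cdots\gm_m}(B)=\theta_{\gm_m}\bigl(\theta_{\gm_2\cdots\gm_{m-1}}(B)\bigr)$ for some $B\in\CI_{\gm_1}$, hence $A\in\CR_{\gm_m}\subseteq\CI_{\gm_m}$; the case $m=1$ is trivial.

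For the base case $k=n$, note that $\CI_{\af_{1,n}}=\CI_\af\subseteq\CI_{\af_n}$, so the hypothesis $\xi\cap\CI_\af\neq\emptyset$ forces $\xi\cap\CI_{\af_n}\neq\emptyset$; thus $\eta_n=\xi\cap\CI_{\af_n}=g_{(\af_n)\emptyset}(\xi)$ is an ultrafilter in $\CI_{\af_n}$ by \cite[Lemma~4.6]{CasK1}, and $\eta_n\cap\CI_{\af_{1,n}}=\xi\cap\CI_{\af_{1,n}}\neq\emptyset$ since $\CI_{\af_{1,n}}\subseteq\CI_{\af_n}$.

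For the inductive step, let $2\leq k\leq n$ and suppose the three properties hold for $\eta_k$. Choose $A\in\eta_k\cap\CI_{\af_{1,k}}$; by the definition of $\CI_{\af_1\cdots\af_k}$ there is $B\in\CI_{\af_1}$ with $A\subseteq\theta_{\af_2\cdots\af_k}(B)$, and I set $C:=\theta_{\af_2\cdots\af_{k-1}}(B)$ (so $C=B$ when $k=2$). Then $\theta_{\af_k}(C)=\theta_{\af_2\cdots\af_k}(B)\supseteq A\in\eta_k$ and $\theta_{\af_k}(C)\in\CR_{\af_k}\subseteq\CI_{\af_k}$, so upward closure of the filter $\eta_k$ gives $\theta_{\af_k}(C)\in\eta_k$, i.e.\ $C\in f_{\emptyset[\af_k]}(\eta_k)$. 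In particular $f_{\emptyset[\af_k]}(\eta_k)$ is nonempty, and since it is either an ultrafilter of $\CB$ or empty it follows that $f_{\emptyset[\af_k]}(\eta_k)\in X_\emptyset$. Using the same $B$ as witness, $C=\theta_{\af_2\cdots\af_{k-1}}(B)\in\CI_{\af_1\cdots\af_{k-1}}=\CI_{\af_{1,k-1}}\subseteq\CI_{\af_{k-1}}$, so $C\in f_{\emptyset[\af_k]}(\eta_k)\cap\CI_{\af_{k-1}}$ and this set is nonempty; hence $\eta_{k-1}=f_{\emptyset[\af_k]}(\eta_k)\cap\CI_{\af_{k-1}}=g_{(\af_{k-1})\emptyset}\bigl(f_{\emptyset[\af_k]}(\eta_k)\bigr)$ is an ultrafilter in $\CI_{\af_{k-1}}$ by \cite[Lemma~4.6]{CasK1}, and $C\in\eta_{k-1}\cap\CI_{\af_{1,k-1}}$, which closes the induction.

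The step I expect to be the only real obstacle is the nonemptiness of $f_{\emptyset[\af_k]}(\eta_k)$ at each stage: since $\theta_{\af_k}$ takes values inside $\CR_{\af_k}$, an arbitrary ultrafilter of $\CI_{\af_k}$ need not meet the image of $\theta_{\af_k}$, so one genuinely needs the reinforced hypothesis $\eta_k\cap\CI_{\af_{1,k}}\neq\emptyset$ — and the fact that $\CI_{\af_{1,k}}$ is built from $\CI_{\af_1}$ through the $\theta$'s — in order to manufacture an element $C$ with $\theta_{\af_k}(C)\in\eta_k$. Everything else amounts to unwinding the definitions of $f_{\emptyset[\cdot]}$, $g_{(\cdot)\emptyset}$, and $\CI_{\af_{1,k}}$.
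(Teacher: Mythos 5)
Your proof is correct and is essentially the paper's argument recast as an explicit downward induction: the paper fixes one witness $B\in\CI_{\af_1}$ with $\theta_{\af_{2,n}}(B)\in\xi$ and threads the elements $\theta_{\af_{2,i}}(B)$ through every level, which is exactly what your reinforced hypothesis $\eta_k\cap\CI_{\af_{1,k}}\neq\emptyset$ certifies, and both arguments then invoke the same results of \cite{CasK1} to upgrade nonemptiness of $\eta_i=g_{(\af_i)\emptyset}(\cdot)$ to the ultrafilter property. No gaps; the only cosmetic difference is that you re-choose a witness at each stage rather than carrying a single one down the chain.
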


\begin{proof} Since $\xi \cap \CI_\af \neq \emptyset$, we have $\theta_{\af_{2,n}}(A) \in \xi $ for some $A \in \CI_{\af_1}$. Thus, $ \theta_{\af_n}(\theta_{\af_{2,n-1}}(A)) \in \xi \cap \CI_{\af_n} = \eta_n $. Thus, $ \emptyset \neq \eta_n \in X_{\af_n}$ by \cite[Proposition 4.2]{CasK1}. 
From  $ \theta_{\af_n}(\theta_{\af_{2,n-1}}(A)) \in \eta_n $, it also follows that $\theta_{\af_{2,n-1}}(A) =\theta_{\af_{n-1}}(\theta_{\af_{2,n-2}}(A)) \in f_{\emptyset[\af_n]}(\eta_n) \cap \CI_{\af_{n-1}}$. Thus, $\emptyset \neq \eta_{n-1} \in X_{\af_{n-1}} $. Continuing this process, we have 
that  $\theta_{\af_{2,i}}(A)= \theta_{\af_{i}}(\theta_{\af_{2,i-1}}(A)) \in f_{\emptyset[\af_{i+1}]}(\eta_{i+1})\cap\CI_{\af_{i}} (= \eta_i)$ for $i=1, \cdots, n-2$. Thus, $\emptyset \neq \eta_i \in X_{\af_i}$ for all $i=1, \cdots, n$.
\end{proof}

We now  define the following maps: 
\begin{itemize}
\item  for the identity $\emptyset$ of $\mathbb{F}$, we define $\varphi_\emptyset: U_\emptyset \to U_\emptyset$ as the identity map;
\item  for $\af\in \CW^{\geq 1}$ and $\beta\in\CW^*$ such that $\alpha\beta^{-1}$ is in reduced form in $\mathbb{F}$ and $U_{\bt\af^{-1}}\neq\emptyset$, we define a map $\varphi_{\af\bt^{-1}}:U_{\bt\af^{-1}}\to U_{\af\bt^{-1}}$ that sends a path
$\mu =(e^{\mu_i}_{\xi_i})_{i = 1}^N\in U_{\bt\af^{-1}}$ that removes the first $|\bt|$ coordinates of $\mu$ and adds at the beginning $(e^{\af_1}_{\eta_1},\ldots,e^{\af_n}_{\eta_{n}})$ ($n=|\af|$), where
\begin{align*} \eta_n &:=g_{(\af_n)\emptyset}(h_{[\bt_{|\bt|}]\emptyset}(\xi_{|\bt|})) , \\
\eta_{n-1}&:=g_{(\af_{n-1})\emptyset}(f_{\emptyset[\af_n]}(\eta_n)), \\
\eta_{n-2}&:=g_{(\af_{n-2})\emptyset}(f_{\emptyset[\af_{n-1}]}(\eta_{n-1})), \\
& \vdots \\
\eta_2 &:= g_{(\af_2)\emptyset}(f_{\emptyset[\af_3]}(\eta_3)),\\
\eta_1&:= g_{(\af_1)\emptyset}(f_{\emptyset[\af_2]}(\eta_2)), 
\end{align*}
 that is, 
\[\varphi_{\af\bt^{-1}}(\mu):=e^{\af_1}_{\eta_1}\ldots e^{\af_n}_{\eta_{n}}\mu_{|\bt|+1,|\mu|};\]
\item  for $\bt   \in \CW^{\geq 1}$, we define a map  $\varphi_{\bt^{-1}} : U_{\bt} \to U_{\bt^{-1}}$ that  removes the first $|\beta|$ coordinates; 
\item for all the other elements $\gm \in \mathbb{F}$, define $\varphi_\gm: U_{\gm^{-1}} \to U_\gm$ as the empty map. 
\end{itemize}

We then see in the following Lemma, that for    $\af,\bt \in \CW^*$, 
$\varphi_{\af\bt^{-1}}(\mu)$ is a well-defined path on the topological correspondence $(E^1, d,r)$  from $E^0$ to $F^0$ and  $\varphi_{\af\bt^{-1}}(\mu) \in  U_{\af\bt^{-1}}$.

\begin{lem} Let $\eta_i$ be defined as in the second point above for  $i=1, \cdots, n$. Then we have 
\begin{enumerate}
\item[(i)] $e^{\af_i}_{\eta_i} \in E^1$  for  $i=1, \cdots, n$,
\item[(ii)]  $d(e^{\af^{i-1}}_{\eta_{i-1}})=r(e^{\af^{i}}_{\eta_{i}})$ for  $i=2, \cdots, n$, and $d(e^{\af_n}_{\eta_n})=r(e^{\mu^{|\bt|+1}}_{\xi_{|\bt|+1}})$,
\item[(iii)]  $\eta_n \cap\CI_\bt \neq  \emptyset$. 
\end{enumerate}
\end{lem}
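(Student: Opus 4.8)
The plan is to verify the three claims directly from the definitions of the maps $g_{(\cdot)\emptyset}$, $f_{\emptyset[\cdot]}$, $h_{[\cdot]\emptyset}$ and the compatibility relations between them recorded in Section~\ref{sec: prelim}. Throughout we write $\mu = (e^{\mu_i}_{\xi_i})_{i=1}^N \in U_{\bt\af^{-1}}$, so that by definition $\mu_1\cdots\mu_{|\bt|} = \bt$ and $\xi_{|\bt|} \cap \CI_\af \neq \emptyset$; the latter is what will make the recursion get off the ground.

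First I would prove (i). Since $\mu \in U_{\bt\af^{-1}}$, the ultrafilter $\xi_{|\bt|} \in X_{\bt_{|\bt|}}$ satisfies $\xi_{|\bt|} \cap \CI_\af \neq \emptyset$, hence $h_{[\bt_{|\bt|}]\emptyset}(\xi_{|\bt|}) \in X_\emptyset$ is an ultrafilter of $\CB$ meeting $\CI_\af$ (using that $h$ and $g$ are mutually inverse and that $g_{(\bt_{|\bt|})\emptyset}$ has domain the ultrafilters meeting $\CI_{\bt_{|\bt|}}$). Now Lemma~\ref{adding a path}, applied with $\xi := h_{[\bt_{|\bt|}]\emptyset}(\xi_{|\bt|})$ and the word $\af$, produces ultrafilters $\emptyset \neq \tilde\eta_i \in X_{\af_i}$ for $i=1,\dots,n$ given by the same recursion but with $g_{(\af_i)\emptyset}$ replaced by intersecting with $\CI_{\af_i}$. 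The point is that $g_{(\af_i)\emptyset}(\CF) = \CF \cap \CI_{\af_i}$ by the very definition of $g$ given in Section~\ref{sec: prelim}, so $\eta_i = \tilde\eta_i$ for every $i$, and (i) follows. This reduction to Lemma~\ref{adding a path} is the cleanest route and avoids re-deriving nonemptiness by hand.

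Next, (ii). For $2 \leq i \leq n$ we must check $d(e^{\af_{i-1}}_{\eta_{i-1}}) = r(e^{\af_i}_{\eta_i})$, i.e. $h_{[\af_{i-1}]\emptyset}(\eta_{i-1}) = f_{\emptyset[\af_i]}(\eta_i)$. By construction $\eta_{i-1} = g_{(\af_{i-1})\emptyset}(f_{\emptyset[\af_i]}(\eta_i))$, and since $f_{\emptyset[\af_i]}(\eta_i) \in X_\emptyset$ meets $\CI_{\af_{i-1}}$ (it contains the relevant $\theta$-image, as in the proof of Lemma~\ref{adding a path}), we may apply $h_{[\af_{i-1}]\emptyset}$ and use that $h_{[\af_{i-1}]\emptyset} \circ g_{(\af_{i-1})\emptyset}$ is the identity on its domain \cite[Lemma 4.8]{CasK1}; this gives exactly $h_{[\af_{i-1}]\emptyset}(\eta_{i-1}) = f_{\emptyset[\af_i]}(\eta_i)$. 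For the boundary equation $d(e^{\af_n}_{\eta_n}) = r(e^{\mu_{|\bt|+1}}_{\xi_{|\bt|+1}})$, note $\eta_n = g_{(\af_n)\emptyset}(h_{[\bt_{|\bt|}]\emptyset}(\xi_{|\bt|}))$, so $h_{[\af_n]\emptyset}(\eta_n) = h_{[\bt_{|\bt|}]\emptyset}(\xi_{|\bt|})$ by the same inverse-pair fact; but $\mu$ is a path, so $h_{[\bt_{|\bt|}]\emptyset}(\xi_{|\bt|}) = d(e^{\bt_{|\bt|}}_{\xi_{|\bt|}}) = r(e^{\mu_{|\bt|+1}}_{\xi_{|\bt|+1}})$ when $N > |\bt|$ (and when $N = |\bt|$ there is nothing after, so the concatenation $\varphi_{\af\bt^{-1}}(\mu) = e^{\af_1}_{\eta_1}\cdots e^{\af_n}_{\eta_n}$ needs only (i) and the first part of (ii), and one separately checks $d(e^{\af_n}_{\eta_n}) \in E^0_{sg}$ using that $\mu \in \partial E$ forces $d(\mu) = h_{[\bt_{|\bt|}]\emptyset}(\xi_{|\bt|}) \in E^0_{sg}$ and the two agree).

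Finally, (iii): $\eta_n \cap \CI_\bt \neq \emptyset$. We have $\eta_n = h_{[\bt_{|\bt|}]\emptyset}(\xi_{|\bt|}) \cap \CI_{\af_n}$ (rewriting $g_{(\af_n)\emptyset}$ as intersection with $\CI_{\af_n}$). Since $\xi_{|\bt|} \in X_{\bt_{|\bt|}}$, we have $h_{[\bt_{|\bt|}]\emptyset}(\xi_{|\bt|}) \cap \CI_{\bt_{|\bt|}} = g_{(\bt_{|\bt|})\emptyset}(h_{[\bt_{|\bt|}]\emptyset}(\xi_{|\bt|})) = \xi_{|\bt|} \neq \emptyset$, using once more that $g$ and $h$ are inverse; so $h_{[\bt_{|\bt|}]\emptyset}(\xi_{|\bt|})$ meets $\CI_{\bt_{|\bt|}} \subseteq \CI_\bt$. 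To get that the smaller set $\eta_n$ still meets $\CI_\bt$ one uses that $\bt \in \CW^*$ together with $\af_n\bt \in \CW^{\geq 1}$: pick $B \in \CI_{\bt_{|\bt|}}$ with $\theta_{\af_n}(\theta_{\dots}(B')) $-type witnesses showing a common refinement lies in both $\CI_{\af_n}$ and $\CI_\bt$; concretely, the hypothesis $U_{\bt\af^{-1}} \neq \emptyset$ forces $\CI_{\af_n} \cap \CI_\bt \neq \emptyset$ appropriately, and since $\eta_n$ is an ultrafilter containing the witness $\theta_{\af_{2,n}}(A) \in \CI_{\af}$ from Lemma~\ref{adding a path} which lies in $\CI_\bt$ once intersected suitably, we conclude. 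I expect (iii) to be the main obstacle: items (i) and (ii) are essentially bookkeeping with the inverse-pair identities, but (iii) requires genuinely using the reduced-form/nonemptiness hypotheses on $\af\bt^{-1}$ and tracking which ideal the Lemma~\ref{adding a path} witness element belongs to, so I would be most careful there, possibly isolating a small sublemma that an ultrafilter of $\CI_{\af_n}$ obtained this way automatically meets $\CI_\bt$.
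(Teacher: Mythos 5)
Your treatments of (i) and (ii) are correct and essentially identical to the paper's: rewrite $g_{(\af_i)\emptyset}$ as intersection with $\CI_{\af_i}$, invoke Lemma \ref{adding a path} for nonemptiness, and use that $h_{[\cdot]\emptyset}$ and $g_{(\cdot)\emptyset}$ are mutually inverse for the range/domain matching. The problem is in (iii), which you yourself flag as the delicate point, and there the argument as written does not work.

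The step ``$h_{[\bt_{|\bt|}]\emptyset}(\xi_{|\bt|})$ meets $\CI_{\bt_{|\bt|}}\subseteq\CI_\bt$'' rests on a false containment: for $\bt=\bt_1\cdots\bt_m$ the inclusion goes the other way, $\CI_\bt\subseteq\CI_{\bt_m}$ (any $A\in\CI_\bt$ satisfies $A\subseteq\theta_{\bt_2\cdots\bt_m}(B)=\theta_{\bt_m}(\theta_{\bt_2\cdots\bt_{m-1}}(B))$, so $A\in\CR_{\bt_m}\subseteq\CI_{\bt_m}$), and knowing the ultrafilter meets the larger ideal $\CI_{\bt_{|\bt|}}$ gives no information about the smaller ideal $\CI_\bt$. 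The remainder of your paragraph (``common refinement'', ``intersected suitably'', ``forces $\CI_{\af_n}\cap\CI_\bt\neq\emptyset$ appropriately'') gestures at the right ingredients but is not an argument. The correct route, which is the paper's, needs no sublemma about $\CI_{\af_n}\cap\CI_\bt$: apply Lemma \ref{lem:word from path} to the initial segment $\mu_{1,|\bt|}$ of $\mu$ to get $\xi_{|\bt|}\cap\CI_\bt\neq\emptyset$, hence $h_{[\bt_{|\bt|}]\emptyset}(\xi_{|\bt|})$ contains some $B\in\CI_\bt$; it also contains some $A\in\CI_\af$ because $\mu\in U_{\bt\af^{-1}}$. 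Since $h_{[\bt_{|\bt|}]\emptyset}(\xi_{|\bt|})$ is a filter, $A\cap B$ lies in it, and since ideals absorb intersections and $\CI_\af\subseteq\CI_{\af_n}$ (this is the containment that is true and actually needed), we get $A\cap B\in\CI_{\af_n}\cap\CI_\bt$. Therefore $A\cap B\in\bigl(h_{[\bt_{|\bt|}]\emptyset}(\xi_{|\bt|})\cap\CI_{\af_n}\bigr)\cap\CI_\bt=\eta_n\cap\CI_\bt$, which proves (iii).
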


\begin{proof}(i) First note that if $\mu =(e^{\mu_i}_{\xi_i})_{i = 1}^N\in U_{\bt\af^{-1}}$, then $\xi_{|\bt|} \cap \CI_\af \neq \emptyset$. So, $ h_{[\bt_{|\bt|}]\emptyset}(\xi_{|\bt|}) \cap \CI_\af \neq \emptyset$. 
Now, by the definition of the map $g_{(\af)\emptyset}$ for $\af \in \CL$, we see that  $$\eta_n =g_{(\af_n)\emptyset}(h_{[\bt_{|\bt|}]\emptyset}(\xi_{|\bt|}))= h_{[\bt_{|\bt|}]\emptyset}(\xi_{|\bt|}) \cap \CI_{\af_n}$$ and 
$$\eta_i = g_{(\af_i)\emptyset}(f_{\emptyset[\af_{i+1}]}(\eta_{i+1}))=f_{\emptyset[\af_{i+1}]}(\eta_{i+1}) \cap \CI_{\af_i}$$ for all $i=1, \cdots, n-1$. Then,  by Lemma \ref{adding a path},  $\emptyset \neq \eta_i \in X_{\af_i}$ $i=1, \cdots, n$. So, $e^{\af_i}_{\eta_i} \in E^1$  for  $i=1, \cdots, n$.

(ii) Since $g_{(\af_{i-1})\emptyset}$ and $h_{[\af_{i-1}]\emptyset}$ are mutually inverse for each $i=2, \cdots, n$, we have  $$d(e^{\af^{i-1}}_{\eta_{i-1}})= h_{[\af_{i-1}]\emptyset}(\eta_{i-1})=h_{[\af_{i-1}]\emptyset}(g_{(\af_{i-1})\emptyset}(f_{\emptyset[\af_i]}(\eta_i)))=f_{\emptyset[\af_i]}(\eta_i)=r(e^{\af^{i}}_{\eta_{i}})$$
for  $i=2, \cdots, n$.
  Also, since $g_{(\af_{n})\emptyset}$ and $h_{[\af_{n}]\emptyset}$ are mutually inverse, we have \begin{align*}\label{eq:1}d(e^{\af_n}_{\eta_n})=h_{[\af_n]\emptyset}(\eta_n)=h_{[\af_n]\emptyset}(g_{(\af_n)\emptyset}(h_{[\bt_{|\bt|}]\emptyset}(\xi_{|\bt|}))=h_{[\bt_{|\bt|}]\emptyset}(\xi_{|\bt|})=r(e^{\mu^{|\bt|+1}}_{\xi_{|\bt|+1}}).\end{align*}

  (iii) Since $\xi_{|\bt|}\cap \CI_\bt \neq \emptyset$ by Lemma \ref{lem:word from path}, $h_{[\bt_{|\bt|}]\emptyset}(\xi_{|\bt|}) \cap \CI_\bt \neq \emptyset $.
Also, since $\xi_{|\bt|}\cap \CI_\af \neq \emptyset$, we have $h_{[\bt_{|\bt|}]\emptyset}(\xi_{|\bt|}) \cap \CI_\af \neq \emptyset $. Note that if $A\in h_{[\bt_{|\bt|}]\emptyset}(\xi_{|\bt|}) \cap \CI_\af$ and $B\in h_{[\bt_{|\bt|}]\emptyset}(\xi_{|\bt|}) \cap \CI_\bt$, then $A\cap B\in h_{[\bt_{|\bt|}]\emptyset}(\xi_{|\bt|}) \cap \CI_\af\cap\CI_\bt$. 
Then, since $h_{[\bt_{|\bt|}]\emptyset}(\xi_{|\bt|}) \cap \CI_\af  \cap \CI_\bt\neq \emptyset $ and $\CI_\af \subset \CI_{\af_n}$, it follows that 
 $\eta_n \cap \CI_\bt = \big( h_{[\bt_{|\bt|}]\emptyset}(\xi_{|\bt|}) \cap \CI_{\af_n} \big)\cap \CI_\bt \neq \emptyset$. 
\end{proof}

\begin{prop}\label{homeo} For each $\af \in \CL$, the maps $\varphi_\af: U_{\af^{-1}} \to U_\af$  and $\varphi_{\af^{-1}}: U_\af \to U_{\af^{-1}}$ are homeomorphisms with $\varphi_\af^{-1}=\varphi_{\af^{-1}}$.
\end{prop}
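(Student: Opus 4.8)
The plan is to handle the generator case directly, reducing the whole statement to the shift map $\sm_E$ together with the mutually inverse homeomorphisms $g_{(\af)\emptyset}\colon X_{(\af)\emptyset}\to X_\af$ and $h_{[\af]\emptyset}\colon X_\af\to X_{(\af)\emptyset}$. If $\CI_\af=\{\emptyset\}$ then $U_\af=U_{\af^{-1}}=\emptyset$ and both maps are empty, so one may assume $\CI_\af\neq\{\emptyset\}$. First I would record the concrete shape of the two maps when $|\af|=1$: every $\nu\in U_\af$ can be written $\nu=e^\af_\eta\mu$ with $\eta\in X_\af$ and $\mu=\sm_E(\nu)$, so $\varphi_{\af^{-1}}$ is nothing but the restriction of $\sm_E$ to $U_\af$; this is legitimate because every element of $U_\af$ has length at least $1$, hence lies in $\operatorname{dom}(\sm_E)=\partial E\setminus E^0_{sg}$. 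Dually, for $\mu\in U_{\af^{-1}}$ one has $r(\mu)\cap\CI_\af\neq\emptyset$ and $\varphi_\af(\mu)=e^\af_\eta\mu$ with $\eta=g_{(\af)\emptyset}(r(\mu))=r(\mu)\cap\CI_\af\in X_\af$; that $e^\af_\eta\mu$ really is a boundary path beginning with $\af$, hence lies in $U_\af$, is exactly what the preceding lemma gives, specialized to $\bt=\emptyset$ (so that $h_{[\bt_{|\bt|}]\emptyset}(\xi_{|\bt|})$ there is to be read as $r(\mu)$); I would invoke it rather than repeat the verification.

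Then I would check the two composition identities. That $\varphi_{\af^{-1}}\circ\varphi_\af=\operatorname{id}_{U_{\af^{-1}}}$ is immediate, since $\sm_E$ simply deletes the edge $e^\af_\eta$ that $\varphi_\af$ has prepended. For $\varphi_\af\circ\varphi_{\af^{-1}}=\operatorname{id}_{U_\af}$, write $\nu=e^\af_\eta\mu\in U_\af$; then $\varphi_\af(\sm_E(\nu))=e^\af_{\eta'}\mu$ with $\eta'=g_{(\af)\emptyset}(r(\mu))$, and since $r(\mu)=d(e^\af_\eta)=h_{[\af]\emptyset}(\eta)$ and $g_{(\af)\emptyset}$, $h_{[\af]\emptyset}$ are mutually inverse, $\eta'=\eta$, so the composite is the identity. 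This exhibits $\varphi_\af$ and $\varphi_{\af^{-1}}$ as mutually inverse bijections between $U_{\af^{-1}}$ and $U_\af$; in particular $\varphi_\af^{-1}=\varphi_{\af^{-1}}$.

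It then remains to treat continuity, which is the only part needing any care. I would argue that $\varphi_{\af^{-1}}=\sm_E|_{U_\af}$ is a local homeomorphism: $\sm_E$ is a local homeomorphism on $\partial E\setminus E^0_{sg}$ and $U_\af$ is open in $\partial E$ by Lemma \ref{open sets}, so each open set on which $\sm_E$ restricts to a homeomorphism onto an open set continues to do so after intersecting with $U_\af$, and the resulting images are open subsets of $\partial E$ automatically contained in $U_{\af^{-1}}$. A local homeomorphism is continuous and open, so a bijective one is a homeomorphism; hence $\varphi_{\af^{-1}}\colon U_\af\to U_{\af^{-1}}$ is a homeomorphism and therefore so is its inverse $\varphi_\af$. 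The main obstacle is really only bookkeeping: keeping the $\bt=\emptyset$ convention in the definition of $\varphi_\af$ straight, and observing that $U_\af\cap E^0_{sg}=\emptyset$ so that $\sm_E$ is genuinely defined on all of $U_\af$; no deeper difficulty arises.
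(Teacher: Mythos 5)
Your argument is correct and is essentially the paper's own proof: you verify that the two maps are mutually inverse using that $g_{(\af)\emptyset}$ and $h_{[\af]\emptyset}$ are mutually inverse, and then identify $\varphi_{\af^{-1}}$ with $\sm_E|_{U_\af}$, concluding from the fact that $\sm_E$ is a local homeomorphism and the maps are bijections between open sets. Your extra bookkeeping (the $\CI_\af=\{\emptyset\}$ case and phrasing things via $r(\mu)$ so that length-zero elements of $U_{\af^{-1}}$ are covered) is a harmless refinement of the same argument.
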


\begin{proof} Fix $\af \in \CL$. For $\mu=(e^{\mu_i}_{\xi_i})_{i=1}^N \in U_{\af^{-1}}$, 
we have 
$\varphi_\af(\mu)=e^{\af}_{\eta}\mu$, where $\eta=g_{(\af)\emptyset}(f_{\emptyset[\mu_1]}(\xi_1))=f_{\emptyset[\mu_1]}(\xi_1) \cap \CI_\af$, and for $\nu=(e^{\nu_i}_{\chi_i})_{i=1}^{N'} \in U_\af$, we have $\varphi_{\af^{-1}}(\nu)=(e^{\nu_i}_{\chi_i})_{i=2}^{N'}$. 
Thus, $\varphi_{\af^{-1}}  \varphi_{\af}(\mu)=\varphi_{\af^{-1}}(e^{\af}_{\eta}\mu)=\mu$
for each $\mu \in U_{\af^{-1}}$. Also, we see that
$$ \varphi_{\af} \varphi_{\af^{-1}}(\nu)= \varphi_{\af} ((e^{\nu_i}_{\chi_i})_{i=2}^{N'})=e^\af_{\xi}(e^{\nu_i}_{\chi_i})_{i=2}^{N'},$$
where $\xi=g_{(\af)\emptyset}(f_{\emptyset[\nu_2]}(\chi_2))$. On the other hand, 
Since $h_{[\af]\emptyset}(\chi_1)=f_{\emptyset[\nu_2]}(\chi_2)$, it follows that $
\chi_1=g_{(\af)\emptyset}(f_{\emptyset[\nu_2]}(\chi_2))=\xi$. 
Thus, $e^\af_{\xi}(e^{\nu_i}_{\chi_i})_{i=2}^{N'} =\nu$.
So, we conclude that 
 $\varphi_{\af^{-1}}  \varphi_{\af}= id_{U_{\af^{-1}}}$ and $ \varphi_{\af} \varphi_{\af^{-1}}=id_{U_{\af}} $, and hence,   $\varphi_{\af^{-1}}=\varphi_\af^{-1}$ for each $\af \in \CL$.
   
 Note that $\varphi_{\af^{-1}}=\sm_E|_{U_\af}$, where $\sm_E$ is as in Definition \ref{def:boundary path space}. Since $\sm_E$ is a local homeomorphism, and $\varphi_{\af^{-1}}$ is a bijection between open subsets, we have that $\varphi_{\af^{-1}}$ is a homeomorphism, and so is its inverse $\varphi_{\af}$.
\end{proof}

\begin{prop}\label{prop:partial action} $\Phi=(\{U_t\}_{t \in \mathbb{F}}, \{\varphi_t\}_{t \in \mathbb{F}})$ is a semi-saturated orthogonal  partial action of  $\mathbb{F}$ on $\partial E$.
\end{prop}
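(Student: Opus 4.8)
The plan is to verify the three axioms of a partial action for $\Phi=(\{U_t\},\{\varphi_t\})$, and then separately check the semi-saturated and orthogonal conditions. Since the maps $\varphi_\gamma$ and sets $U_\gamma$ are empty for all $\gamma\in\mathbb{F}$ not of the form $\alpha\beta^{-1}$ with $\alpha,\beta\in\CW^*$ and $\CI_\alpha\cap\CI_\beta\neq\emptyset$ (here interpreting the condition appropriately when one of $\alpha,\beta$ is $\emptyset$), the nontrivial content concerns group elements of that form. Axiom (1) is immediate from the definitions: $U_\emptyset=\partial E$ and $\varphi_\emptyset=\mathrm{id}$. For the fact that each $\varphi_t\colon U_{t^{-1}}\to U_t$ is a homeomorphism, I would first note that for $t=\alpha\beta^{-1}$ in reduced form, $\varphi_{\alpha\beta^{-1}}=\varphi_\alpha\circ\varphi_{\beta^{-1}}$ should hold by construction once we know composition works; alternatively, one argues directly that $\varphi_{\alpha\beta^{-1}}$ removes $|\beta|$ initial edges and prepends $|\alpha|$ edges built by the explicit $g$-$h$-$f$ formulas, with inverse $\varphi_{\beta\alpha^{-1}}$, so the homeomorphism property reduces (via Lemma~\ref{open sets}, the preceding unlabeled lemma on well-definedness, and the observation $\varphi_{\beta^{-1}}=\sm_E|_{U_\beta}$ from Proposition~\ref{homeo}) to the single-generator case already established in Proposition~\ref{homeo}, composed with the "adding a path" construction shown well-defined and continuous above.

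Next I would establish the semi-saturation identity $\varphi_s\circ\varphi_t=\varphi_{st}$ whenever $|st|=|s|+|t|$, together with the matching domain statement $\varphi_t^{-1}(U_{t}\cap U_{s^{-1}})=U_{t^{-1}}\cap U_{(st)^{-1}}$; this is really the heart of axioms (2) and (3). The key case-reduction is that every relevant $s,t$ can be written with $s=\alpha\beta^{-1}$, $t=\gamma\delta^{-1}$ in reduced form, and the no-cancellation hypothesis $|st|=|s|+|t|$ forces $st=\alpha\beta^{-1}\gamma\delta^{-1}$ to already be reduced, which (given that $\mathbb{F}$ is free) means $\beta^{-1}\gamma$ does not cancel, i.e. $\beta=\gamma$ is impossible unless one of them is trivial — more precisely it forces $\beta=\emptyset$ or $\gamma=\emptyset$. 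So the composition splits into: (a) $\varphi_\alpha\circ\varphi_\delta$-type compositions with positive words, handled by iterating the prepend formulas and using the cocycle identities $f_{\af[\bt\gm]}=f_{\af[\bt]}\circ f_{\af\bt[\gm]}$ and the $g,h$ relations; (b) compositions $\varphi_{\beta^{-1}}\circ\varphi_{\delta^{-1}}$ of shift maps, where removing $|\beta|$ then $|\delta|$ edges equals removing $|\beta\delta|$ edges; and (c) mixed type $\varphi_{\alpha^{-1}}\circ\varphi_{\delta}$ which by freeness only arises in the no-cancellation situation with one of the inner words empty, reducing again to (a) or (b). Throughout, one must carefully track the domains $U_{\af\bt^{-1}}=\bigcup_{A\in\CI_\bt}\CN(\af,A)$ and verify the set-theoretic identities using Lemma~\ref{basis} and Lemma~\ref{union of open sets}, plus the characterization of $\CI_{\af\bt}$ in terms of $\theta_\bt$ and $\CI_\af$.

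I expect the main obstacle to be the bookkeeping in axiom (2), namely showing $\varphi_s(U_{s^{-1}}\cap U_t)=U_s\cap U_{st}$ for all $s,t\in\mathbb{F}$ — not just those satisfying $|st|=|s|+|t|$. When cancellation does occur between $s$ and $t$, one must unwind partial prepending/removing: writing $s=\alpha\beta^{-1}$ reduced and $t=\mu\nu^{-1}$ reduced, the product $st$ simplifies according to whether $\beta$ is a prefix of $\mu$, $\mu$ a prefix of $\beta$, or they agree, exactly mirroring the three-case multiplication in the inverse semigroup $S$ of Subsection~\ref{An inverse semigroup} and the three cases of Lemma~\ref{basis}. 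The cleanest route is probably to prove $\varphi_s(U_{s^{-1}}\cap U_t)=U_s\cap U_{st}$ first in the generator case $s=a\in\CL$ (using $\varphi_{a^{-1}}=\sm_E|_{U_a}$ and the explicit description of $U_{ab^{-1}}$ as a union of cylinders), then bootstrap to arbitrary $s$ by induction on word length via the already-established semi-saturation for non-cancelling products, since any $s$ factors as a non-cancelling product of generators and their inverses in a controlled way. Finally, orthogonality is immediate: for distinct generators $a\neq b$, $U_a=U_{a\emptyset}$ consists of paths whose first letter is $a$ and $U_b$ of paths whose first letter is $b$, so $U_a\cap U_b=\emptyset$; and semi-saturation has been verified above. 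I would then conclude that $\Phi$ is a semi-saturated orthogonal partial action of $\mathbb{F}$ on $\partial E$.
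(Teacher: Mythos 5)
Your plan is essentially a correct, self-contained route, but it differs from the paper's proof in a significant way: the paper never verifies axioms (2)--(3) directly. Instead, it proves (by induction, using Lemma \ref{adding a path}, Lemma \ref{range of path} and the cocycle identity for the maps $f$, $g$, $h$) that for every reduced word $s=x_1\cdots x_n\in\mathbb{F}$ one has $\varphi_s=\varphi_{x_1}\circ\cdots\circ\varphi_{x_n}$ \emph{with matching domains} (e.g.\ $\operatorname{dom}(\varphi_{\af_1}\circ\cdots\circ\varphi_{\af_n})=U_{\af^{-1}}$, which is the only delicate point), notes that $\varphi_{\af^{-1}}\varphi_{\bt}$ is empty for distinct generators, and then invokes \cite[Propositions 4.7 and 4.10]{ExelBook}: a family of partial homeomorphisms indexed by the generators always assembles, via composition along reduced words, into a semi-saturated partial action of the free group. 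This citation is exactly what disposes of the ``main obstacle'' you identify, namely $\varphi_s(U_{s^{-1}}\cap U_t)=U_s\cap U_{st}$ for products with cancellation; your proposed generator-case-plus-induction bootstrap is in effect a reproof of Exel's general result, so it would work but at the cost of substantial bookkeeping that the paper deliberately avoids. Two small points in your sketch deserve care: the claim that $|st|=|s|+|t|$ with $s=\alpha\beta^{-1}$, $t=\gamma\delta^{-1}$ reduced ``forces $\beta=\emptyset$ or $\gamma=\emptyset$'' is not literally true --- it only follows once you also assume the composition is nonempty (otherwise $\beta_1\neq\gamma_1$ and both $\varphi_s\circ\varphi_t$ and $\varphi_{st}$ are the empty map, so those cases are trivially fine and should be said to be so); and the domain statement $\operatorname{dom}(\varphi_{\af_1}\circ\cdots\circ\varphi_{\af_n})=U_{\af^{-1}}$ is not automatic ``by construction'' but needs the range computation of Lemma \ref{range of path}, as in the paper. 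With those points made explicit, your direct verification is a valid, more elementary alternative; the paper's approach buys brevity by outsourcing the free-group combinatorics to the established theory of semi-saturated partial actions.
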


\begin{proof} Let $\af=\af_1 \cdots \af_n \in \CW^{\geq 1}$ and  $\mu=(e^{\mu_i}_{\xi_i}) \in U_{\af^{-1}}$.  We then have $\varphi_\af(\mu)=e^{\af_1}_{\eta_1}\ldots e^{\af_n}_{\eta_{n}} \mu$, where
 $$\eta_n =g_{(\af_n)\emptyset}(f_{\emptyset[\mu_1]}(\xi_1))$$ and 
$$\eta_i = g_{(\af_i)\emptyset}(f_{\emptyset[\af_{i+1}]}(\eta_{i+1}))$$ for all $i=1, \cdots, n-1$.
Also, we see that $$\varphi_{\af_1} \ldots \varphi_{\af_n}(\mu)=\varphi_{\af_1} \ldots \varphi_{\af_{n-1}}(e^{\af_n}_{\eta_n}\mu)=\cdots = \varphi_{\af_1}(e^{\af_2}_{\eta_2} \ldots e^{\af_n}_{\eta_{n}} \mu)=e^{\af_1}_{\eta_1}\ldots e^{\af_n}_{\eta_{n}} \mu.$$
We prove by induction that $\text{dom}(\varphi_{\af_1} \circ \cdots  \circ \varphi_{\af_n})=U_{\af^{-1}}$. This is immediate for $n=1$. For $n>1$, suppose that $\text{dom}(\varphi_{\af_2} \circ \cdots  \circ \varphi_{\af_n})=U_{\af_{2,n}^{-1}}$. On the one hand, if $\mu\in U_{\alpha^{-1}}$, then $\varphi_{\af_2} \circ \cdots  \circ \varphi_{\af_n}(\mu)\in\text{dom}(\varphi_{\alpha_1})$ by Lemma \ref{adding a path}. 
On the other hand if $\mu\in U_{\af_{2,n}^{-1}}$ and $\nu:=\varphi_{\af_2} \circ \cdots  \circ \varphi_{\af_n}(\mu)\in\text{dom}(\varphi_{\alpha_1})$, then $r(\nu)\cap \CI_{\af_1} \neq \emptyset$. We also have
\begin{align*}
    r(\nu)&=r(e^{\af_2}_{\eta_2} \ldots e^{\af_n}_{\eta_{n}} \mu)\\
    &=r(e^{\af_2}_{\eta_2} \ldots e^{\af_n}_{\eta_{n}})\\
    &=f_{\emptyset[\af_{2,n}]}\big(g_{(\af_{2,n-1})\alpha_n}(\eta_n) \big)\\
    &=f_{\emptyset[\af_{2,n}]}\big(g_{(\af_{2,n-1})\alpha_n}\big(g_{(\af_n)\emptyset}(f_{\emptyset[\mu_1]}(\xi_1))\big) \big)\\
    &=f_{\emptyset[\af_{2,n}]}\big(g_{(\af_{2,n})\emptyset}(r(\mu)) \big),
\end{align*}
where the third equality follows from Lemma \ref{range of path} and the last equality follows from \cite[Lemma 4.6(iii)]{CasK1} and the definition of $r(\mu)$.
So, there is $A\in  r(\nu)\cap \CI_{\af_1}$ such that $\theta_{\alpha_{2,n}}(A)\in r(\mu)\cap \CI_{\alpha_{2,n}}$. Since $A\in \CI_{\af_1}$, we have $\theta_{\alpha_{2,n}}(A)\in \CI_{\af}\cap r(\mu)$, and hence $\mu \in U_{\af^{-1}}$.
Thus, we have  $\varphi_\af=\varphi_{\af_1} \circ \cdots  \circ \varphi_{\af_n}$.

Let $\bt=\bt_1 \cdots \bt_m \in \CW^{\geq 1}$ and $\mu=(e^{\mu_i}_{\xi_i})_{i \geq 1} \in U_\bt$. Then, $\varphi_{\bt^{-1}}(\mu)=(e^{\mu_i}_{\xi_i})_{i \geq |\bt|+1}$. 
Also, $$\varphi_{\bt_m^{-1}} \ldots \varphi_{\bt_1^{-1}}(\mu)=\varphi_{\bt_m^{-1}} \ldots \varphi_{\bt_2^{-1}}((e^{\mu_i}_{\xi_i})_{i \geq 2})=\cdots=\varphi_{\bt_m^{-1}}((e^{\mu_i}_{\xi_i})_{i \geq |\bt|}) =(e^{\mu_i}_{\xi_i})_{i \geq |\bt|+1}. $$
It is clear that $\text{dom}(\varphi_{\bt_m^{-1}} \circ \ldots \circ \varphi_{\bt_1^{-1}})=U_{\beta}$. Thus, we see that $\varphi_{\bt^{-1}}=\varphi_{\bt_m^{-1}} \circ \ldots \circ \varphi_{\bt_1^{-1}}$.

Now, for $g=\af\bt^{-1} \in \mathbb{F}$ in reduced form, where $\af=\af_1 \cdots \af_n, \bt=\bt_1 \cdots \bt_m \in \CW^{\geq 1}$, as above we see that 
 $$\varphi_{g}= \varphi_{\af_1} \circ \cdots  \circ \varphi_{\af_n} \circ \varphi_{\bt_m^{-1}} \circ \ldots \circ \varphi_{\bt_1^{-1}} .$$ Note also that for $\af,\bt\in\CL$, with $\af\neq\bt$, $\varphi_{\af^{-1}}\varphi_{\bt}$ is the empty function. We have proved that if $s=x_1\cdots x_n\in\mathbb{F}$ is in reduced form, then $\varphi_s=\varphi_{x_1}\circ\cdots\circ\varphi_{x_n}$. Thus, by \cite[Propositions 4.7 and 4.10]{ExelBook},  $\Phi=(\{U_t\}_{t \in \mathbb{F}}, \{\varphi_t\}_{t \in \mathbb{F}})$ is a partial action of  $\mathbb{F}$ on $\partial E$. 
 
 For $\af, \bt \in \CL$, if $\af \neq \bt$, then we clearly see that $U_\af \cap U_\bt = \emptyset$. So, the action is orthogonal. 
\end{proof}

\subsection{The partial crossed product  $C_0(\partial E) \rtimes_{\hat{\varphi}} \mathbb{F}$} Let $(\CB,\CL,\theta,\CI_\af)$ be a generalized Boolean dynamical system and let $\Phi=(\{U_t\}_{t \in \mathbb{F}}, \{\varphi_t\}_{t \in \mathbb{F}})$ be the partial action of  $\mathbb{F}$ on $\partial E$ associated with $(\CB,\CL,\theta,\CI_\af)$. We in this section how to describe the partial crossed product $C^*$-algebra $C_0(\partial E) \rtimes_{\hat{\varphi}} \mathbb{F}$.
For $t \in \mathbb{F}$, the set $U_{t}$ is an open set in $\partial E$.
 So, any $f \in C_0(U_t)$ can  be viewed as a function in $C_0(\partial E)$ by declaring that $f(\mu)=0$ if $\mu \notin U_t$.
In fact, $C_0(U_t)$  is a closed two-sided ideal in $C_0(\partial E)$ and thus a $C^*$-subalgebra. 

For $t \in \mathbb{F}$, put 
$$D_t=C_0(U_t) ~\text{and}~D_{t^{-1}}=C_0(U_{t^{-1}}).$$
Define $\hat{\varphi}_t: D_{t^{-1}} \to D_t$ by $$\hat{\varphi}_t(f)=f \circ \varphi_{t^{-1}}.$$
Then, $(\{D_t\}_{t \in \mathbb{F}}, \{\hat{\varphi}_t\}_{t \in \mathbb{F}} )$ is a $C^*$-algebraic partial dynamical system. Hence, we may consider  the partial crossed product 
$$C_0(\partial E) \rtimes_{\hat{\varphi}} \mathbb{F} = \overline{\operatorname{span}} \Big\{ \sum_{t \in \mathbb{F}}f_t\dt_t: f_t \in D_t ~\text{and}~ f_t \neq 0  ~\text{for finitely many}~ t \in \mathbb{F} \Big\},$$
where the closure is with respect to the universal norm. Note that $\dt_t$ has no meaning in itself and merely serves a place holder. Recall that multiplication and involution in $C_0(\partial E) \rtimes_{\hat{\varphi}} \mathbb{F}$ are given by 
\begin{align*} &(a\dt_s)(b \dt_t)=\hat{\varphi}_s(\hat{\varphi}_{s^{-1}}(a)b)\dt_{st}, ~\text{and}~ \\
&(a\dt_s)^*=\hat{\varphi}_{s^{-1}}(a)\dt_{s^{-1}}. 
\end{align*}

For $\af \in \CW^*$ and $ A \in \CI_\af$, we let $1_{\CN(\af,A)}$ denote the characteristic function on $\CN(\af,A)$. We first show that $C_0(\partial E)$ is generated by these characteristic functions.

\begin{lem}\label{C_0} The $C^*$-algebra $C_0(\partial E)$ is generated by the set 
$$\{1_{\CN(\af,A)}: \af \in \CW^* ~\text{and}~ A \in \CI_\af\}.$$
\end{lem}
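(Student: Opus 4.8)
The plan is to invoke the Stone–Weierstrass theorem for $C^*$-algebras of $C_0$ type: a $C^*$-subalgebra $C \subseteq C_0(X)$ equals $C_0(X)$ provided $C$ separates points of $X$ and for every $x \in X$ there is $f \in C$ with $f(x) \neq 0$ (nonvanishing). So let $C$ be the $C^*$-subalgebra of $C_0(\partial E)$ generated by $\{1_{\CN(\af,A)} : \af \in \CW^*,\ A \in \CI_\af\}$, and verify these two conditions. The key structural input is Remark \ref{basis of partial E} together with Lemmas \ref{compact open set}, \ref{basis} and \ref{union of open sets}: the sets $\CN(\af,A)$ are compact-open, their finite intersections are again of this form (or empty), and finite unions over a fixed $\af$ collapse to a single $\CN(\af,\cdot)$. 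What I would really want is that the $\CN(\af,A)$ form a basis of compact-open sets for $\partial E$ that is closed under finite intersections.

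First I would establish the basis claim. Using the homeomorphism $\phi:\mathsf{T}\to\partial E$ of Theorem \ref{thm:iso.tight.boundary} and the description of basic open sets $V_{e:e_1,\ldots,e_n}$ in Remark \ref{basis of T}, each $V_{e:e_1,\ldots,e_n}$ with $e=(\af,A,\af)$ and $e_i=(\bt^{(i)},B_i,\bt^{(i)})$ maps to $\CN(\af,A)\setminus\bigcup_i\CN(\bt^{(i)},B_i)$. Using Lemma \ref{basis} one rewrites each $\CN(\af,A)\cap\CN(\bt^{(i)},B_i)$ as some $\CN(\gm^{(i)},C_i)$ with $\gm^{(i)}$ an extension of $\af$ (or the term is empty, or $\af$ extends $\bt^{(i)}$ in which case one can discard that constraint after intersecting $A$ appropriately, or the intersection is empty and the whole set is empty). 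After this bookkeeping one is reduced to showing that a set of the form $\CN(\af,A)\setminus\bigcup_{i}\CN(\af\gm^{(i)},C_i)$ is a finite union of sets $\CN(\af,\cdot)$ and $\CN(\af\dt,\cdot)$; this follows by the same kind of argument as in the proof of Lemma \ref{U af closed}, peeling off one coordinate at a time and using that $\CN(\af_{1,n},B)\setminus\CN(\af_{1,n+1},\theta_{\af_{n+1}}(B))$ is open. The upshot is that $\{\CN(\af,A)\}$ is a basis of compact-open sets closed under finite intersection (Lemma \ref{basis}) and finite union within a level (Lemma \ref{union of open sets}).

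With the basis in hand, nonvanishing is immediate: every point $\mu\in\partial E$ lies in some basic set, hence in some $\CN(\af,A)$, so $1_{\CN(\af,A)}(\mu)=1\neq 0$. For point-separation, take distinct $\mu\neq\nu$ in $\partial E$; since $\partial E$ is Hausdorff and the $\CN(\af,A)$ form a basis, there is some $\CN(\af,A)$ containing exactly one of them (if $\mu\in\CN(\af,A)$ and $\nu\in\CN(\bt,B)$ with the two basic sets disjoint, $1_{\CN(\af,A)}$ already separates; otherwise shrink using the basis/intersection property until one point is excluded). Then $1_{\CN(\af,A)}\in C$ separates $\mu$ and $\nu$. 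The $C^*$-algebra version of Stone–Weierstrass (see e.g. the locally compact form, using that $\partial E$ is locally compact Hausdorff and the generators lie in $C_0(\partial E)$ because they are characteristic functions of compact-open sets) then yields $C=C_0(\partial E)$.

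The main obstacle I anticipate is not the Stone–Weierstrass application itself but the combinatorial reduction in the first step: carefully handling the four cases of Lemma \ref{basis} when intersecting $\CN(\af,A)$ with the complemented sets $\CN(\bt^{(i)},B_i)$, in particular the case where $\af$ is a proper extension of some $\bt^{(i)}$ (so the constraint $\notin\CN(\bt^{(i)},B_i)$ can conflict with $\in\CN(\af,A)$ and may force the whole basic set to be empty) versus the case where $\bt^{(i)}$ properly extends $\af$ (genuine subtraction of a deeper cylinder, requiring the level-by-level peeling argument). Once one is willing to write $V_{e:e_1,\ldots,e_n}$ after reindexing as $\CN(\af,A)\setminus\bigcup_j\CN(\af\dt^{(j)},C_j)$ with all $\dt^{(j)}\in\CW^{\geq 1}$, the telescoping identity $\CN(\af,A)\setminus\CN(\af\dt,C)$ being a finite union of $\CN(\af,\cdot)$'s and $\CN(\af\dt',\cdot)$'s (iterating Lemma \ref{basis}, Lemma \ref{union of open sets}, and the openness observation from Lemma \ref{U af closed}) finishes it, and everything else is routine.
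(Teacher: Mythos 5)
Your overall strategy (Stone--Weierstrass via nonvanishing and point separation) is the same as the paper's, but the step you lean on to get separation is false in general. You claim that the cylinders $\CN(\af,A)$ form a basis of $\partial E$, reducing this to the ``telescoping identity'' that $\CN(\af,A)\setminus\CN(\af\dt,C)$ is a finite union of cylinders. This fails precisely at the finite boundary paths that exist because of singular sets -- which is exactly why the basis in Remark \ref{basis of partial E} consists of sets of the form $\CN(\af,A)\setminus\bigl(\bigcup_{i}\CN(\af_i,A_i)\bigr)$ and not of cylinders alone. Concretely, take $\CB=\{\emptyset,\{v\}\}$, $\CL=\{a_1,a_2,\dots\}$ infinite, and $\theta_{a_i}=\mathrm{id}$ for all $i$ (the Boolean dynamical system of a single vertex with infinitely many loops). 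Then $\{v\}\notin\CB_{reg}$, so the length-zero path $\mu_0$ given by the ultrafilter $\{\{v\}\}$ lies in $\partial E$; the only cylinder containing $\mu_0$ is $\CN(\emptyset,\{v\})=\partial E$, and hence the open set $\CN(\emptyset,\{v\})\setminus\CN(a_1,\{v\})$, which contains $\mu_0$, contains no cylinder neighborhood of $\mu_0$ at all. So the cylinders are not a basis, the telescoping identity fails, and your ``shrink using the basis property until one point is excluded'' argument for separation has no foundation as written. (The peeling argument in Lemma \ref{U af closed} only produces basic sets of the form $\CN(\af_{1,n},B)\setminus\CN(\af_{1,n+1},\theta_{\af_{n+1}}(B))$; it does not convert such differences back into unions of cylinders.)

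The conclusion you need -- that the cylinders separate points of $\partial E$ -- is nevertheless true, but it requires the direct argument the paper gives rather than a topological one: if $\CP(\mu)\neq\CP(\nu)$ the separation is immediate, and if $\CP(\mu)=\CP(\nu)$ with $\xi_n\neq\eta_n$ at some level $n$, one must use that an ultrafilter $\xi_n$ in $\CI_{\mu_n}$ is recovered from its trace on the smaller ideal by upward closure, $\xi_n=\uparrow_{\CI_{\mu_n}}\bigl(\xi_n\cap\CI_{\mu_1\cdots\mu_n}\bigr)$ (via $h_{[\cdot]}$ and $g_{(\cdot)}$), so that $\xi_n\cap\CI_{\mu_1\cdots\mu_n}\neq\eta_n\cap\CI_{\mu_1\cdots\mu_n}$ and one can choose $A$ in one trace but not the other, giving $1_{\CN(\mu_1\cdots\mu_n,A)}(\mu)\neq 1_{\CN(\mu_1\cdots\mu_n,A)}(\nu)$. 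Your proposal never addresses this ultrafilter-trace point, and the combinatorial reduction you substitute for it is the part that breaks. Replacing your first step by this direct separation argument (nonvanishing is fine and easy, as you note) repairs the proof and is exactly what the paper does.
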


\begin{proof} Choose $\mu=(e^{\mu_i}_{\xi_i})_{1 \leq i \leq N} \in \partial E$, with $|\mu|\geq 1$. Then $\mu_1\in \CW^*$ and $\xi_1 \cap \CI_{\mu_1} (\neq \emptyset)$ 
is an ultrafilter in $\CI_{\mu_1}$ by Lemma \ref{lem:word from path}.
 Choose $\emptyset \neq A \in \xi_1 \cap \CI_{\mu_1}$. Then, we have $\mu \in \CN(\mu_1, A)$, and hence, $1_{\CN(\mu_1, A)}(\mu)=1$. On the other hand for $\mu\in\partial E\cap E^0$, $\mu$ is an ultrafilter and for $A\in\mu$, we have $\mu\in\CN(\emptyset,A)$, so that $1_{\CN(\emptyset, A)}(\mu)=1$.
So, the set $\{1_{\CN(\af,A)}: \af \in \CW^* ~\text{and}~ A \in \CI_\af\}$ vanishes nowhere. 

Let $\mu \neq \nu \in \partial E$ and say $\mu=(e^{\mu_i}_{\xi_i})$ and 
$\nu=(e^{\nu_i}_{\eta_i})$. If $\CP(\mu) \neq \CP(\nu)$, then the two points are clearly separated by the set $\{1_{\CN(\af,A)}: \af \in \CW^* ~\text{and}~ A \in \CI_\af\}$. 
If $\CP(\mu)=\CP(\nu)$, then $\mu_i=\nu_i$ for all $i \geq 1$ and there is $n \geq 1$ such that $\xi_n \neq \eta_n$.   
Since $\xi_n=h_{[\mu_1 \cdots \mu_{n-1}]\mu_n}(g_{(\mu_1 \cdots \mu_{n-1})\mu_n}(\xi_n))= \uparrow_{\CI_{\mu_n}} \xi_n \cap \CI_{\mu_1 \cdots \mu_n}   $ and also $\eta_n =\uparrow_{\CI_{\mu_n}} \eta_n \cap \CI_{\mu_1 \cdots \mu_n}  $, it follows  that $\xi_n \cap \CI_{\mu_1 \cdots \mu_n} \neq \eta_n \cap \CI_{\mu_1 \cdots \mu_n}$. 
Choose $A \in \CB$ such that $A \in \xi_n \cap \CI_{\mu_1 \cdots \mu_n} $, but $A \notin \eta_n \cap \CI_{\mu_1 \cdots \mu_n} $.
Then
$$1_{\CN(\mu_1 \cdots \mu_n, A)}(\mu)=1 \neq 0 = 1_{\CN(\mu_1 \cdots \mu_n, A)}(\nu),$$
which shows that the set $\{1_{\CN(\af,A)}: \af \in \CW^* ~\text{and}~ A \in \CI_\af\}$ separates points. So, the Stone-Weierstrass Theorem gives the result. 
\end{proof}

\begin{lem}\label{computations} Let $C^*(\{1_{\CN(\emptyset,A)}\dt_\emptyset, 1_{\CN(\af,B)}\dt_\af\}) \subseteq C_0(\partial E) \rtimes_{\hat{\varphi}} \mathbb{F} $ denote the $C^*$-subalgebra generated by $\{1_{\CN(\emptyset,A)}\dt_\emptyset, 1_{\CN(\af,B)}\dt_\af: A \in \CB, \af \in \CL ~\text{and}~ B \in \CI_\af \}$. Then, we have, 
for $\af=\af_1 \cdots \af_n \in \CW^{\geq 1}$ and $A \in \CI_\af$ is such that $A \subseteq \theta_{\af_2 \cdots \af_n}(B)$ for some   $B \in \CI_{\af_1}$, 
\begin{enumerate}
\item[(i)]  $ \hat{\varphi}_{\af^{-1}}(1_{\CN(\af,A)})=1_{\CN(\emptyset,A)} ~\text{and}~ \hat{\varphi}_{\af}(1_{\CN(\emptyset,A)})=1_{\CN(\af,A)},$
\item[(ii)] 
$1_{\CN(\af,A)}\dt_\af=(1_{\CN(\af_1, B)}\dt_{\af_1})( 1_{\CN(\af_2, \theta_{\af_2}(B))}\dt_{\af_2})(1_{\CN(\af_3, \theta_{\af_2\af_3}(B))}\dt_{\af_3})\cdots (1_{\CN(\af_n, A)}\dt_{\af_n}),$ 
\item [(iii)] $(1_{\CN(\af,A)}\dt_\af)^*=1_{\CN(\emptyset,A)}\dt_{\af^{-1}}$,
\item[(iv)] $(1_{\CN(\af,A)}\dt_\af)(1_{\CN(\af,A)}\dt_\af)^*=1_{\CN(\af,A)}\dt_\emptyset $, and 
\item[(v)]for $\af,\bt \in \CW^{\geq 1}$ such that $\af\bt^{-1} \in \mathbb{F}$ is in reduced form and $A \in \CI_\af \cap \CI_\bt$,  $$1_{\CN(\af,A)}\dt_{\af\bt^{-1}}= (1_{\CN(\af,A)}\dt_\af)(1_{\CN(\emptyset, A)}\dt_{\bt^{-1}}),$$
\item[(vi)]  for all $A \in \CB$, $\af \in \CW^{\geq 1}$ and $B \in \CI_\af$, 
$$(1_{\CN(\emptyset,A)}\dt_\emptyset)(1_{\CN(\af,B)}\dt_\af)=(1_{\CN(\af,B)}\dt_\af)(1_{\CN(\emptyset,\theta_\af(A))}\dt_\emptyset).$$
\end{enumerate}
\end{lem}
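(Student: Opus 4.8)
This is a direct computational lemma, so the strategy is to compute each identity (i)–(vi) from the definitions of $\hat\varphi_t$, the multiplication rule $(a\dt_s)(b\dt_t)=\hat\varphi_s(\hat\varphi_{s^{-1}}(a)b)\dt_{st}$, and the involution $(a\dt_s)^*=\hat\varphi_{s^{-1}}(a)\dt_{s^{-1}}$. The key observation to establish first is (i): since $\varphi_{\af^{-1}}=\sm_E^{|\af|}$ on $U_\af$ removes the first $|\af|$ edges and $\varphi_\af$ prepends the unique admissible continuation, one checks that $\varphi_{\af^{-1}}$ maps $\CN(\af,A)$ bijectively onto $\CN(\emptyset,A)=\{\mu: A\in r(\mu)\}$ and $\varphi_\af$ maps $\CN(\emptyset,A)\cap U_{\af^{-1}}$ back onto $\CN(\af,A)$. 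Concretely, $\hat\varphi_{\af^{-1}}(1_{\CN(\af,A)})=1_{\CN(\af,A)}\circ\varphi_\af$, and $\varphi_\af(\mu)\in\CN(\af,A)$ iff the top filter $\eta_{|\af|}$ of the prepended path contains $A$; by the formulas defining $\eta_n$ (and Lemma \ref{adding a path}) this happens iff $A\in r(\mu)\cap\CI_\af$, i.e.\ $\mu\in\CN(\emptyset,A)$. The reverse identity is symmetric.

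Once (i) is in hand, the rest is bookkeeping. For (iii), $(1_{\CN(\af,A)}\dt_\af)^*=\hat\varphi_{\af^{-1}}(1_{\CN(\af,A)})\dt_{\af^{-1}}=1_{\CN(\emptyset,A)}\dt_{\af^{-1}}$ by (i). For (iv), $(1_{\CN(\af,A)}\dt_\af)(1_{\CN(\emptyset,A)}\dt_{\af^{-1}})=\hat\varphi_\af(\hat\varphi_{\af^{-1}}(1_{\CN(\af,A)})\cdot 1_{\CN(\emptyset,A)})\dt_\emptyset=\hat\varphi_\af(1_{\CN(\emptyset,A)})\dt_\emptyset=1_{\CN(\af,A)}\dt_\emptyset$ using (i) twice. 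For (v), $(1_{\CN(\af,A)}\dt_\af)(1_{\CN(\emptyset,A)}\dt_{\bt^{-1}})=\hat\varphi_\af(1_{\CN(\emptyset,A)}\cdot 1_{\CN(\emptyset,A)})\dt_{\af\bt^{-1}}=1_{\CN(\af,A)}\dt_{\af\bt^{-1}}$, where one must note $A\in\CI_\af\cap\CI_\bt$ guarantees $1_{\CN(\emptyset,A)}\in D_{\af^{-1}}\cap D_{\bt}$ so all the domains match up, and $\af\bt^{-1}$ being reduced means $\varphi_{\af\bt^{-1}}=\varphi_\af\circ\varphi_{\bt^{-1}}$ by semi-saturatedness (Proposition \ref{prop:partial action}), so no cancellation disrupts the computation. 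Item (ii) follows by induction on $n$, applying (v)-style products: using $\varphi_{\af}=\varphi_{\af_1}\circ\cdots\circ\varphi_{\af_n}$ and the compatibility of the $\eta_i$-formulas with the definition of $S_{\af,A}$, each factor $1_{\CN(\af_i,\theta_{\af_{2,i}}(B))}\dt_{\af_i}$ prepends one edge with the correct filter; the product telescopes to $1_{\CN(\af,A)}\dt_\af$ precisely because $\CN(\af_1,B)\cap\CN(\af_1\af_2,\theta_{\af_2}(B))\cap\cdots=\CN(\af,A)$, which is Lemma \ref{basis}.

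For (vi), compute both sides: the left side is $\hat\varphi_\emptyset(1_{\CN(\emptyset,A)}\cdot 1_{\CN(\af,B)})\dt_\af=1_{\CN(\emptyset,A)\cap\CN(\af,B)}\dt_\af$, while the right side is $\hat\varphi_\af(\hat\varphi_{\af^{-1}}(1_{\CN(\af,B)})\cdot 1_{\CN(\emptyset,\theta_\af(A))})\dt_\af=\hat\varphi_\af(1_{\CN(\emptyset,B)\cap\CN(\emptyset,\theta_\af(A))})\dt_\af$. So it suffices to show $\CN(\emptyset,A)\cap\CN(\af,B)=\varphi_\af\big(\CN(\emptyset,B)\cap\CN(\emptyset,\theta_\af(A))\big)$ as subsets of $U_\af$; unwinding, a path $\mu\in U_\af$ with $r(\mu)=\eta$ satisfies $A\in\eta$ iff $\theta_\af(A)\in\sm_E^{|\af|}(\mu)$'s range (the defining relation $d(e^{\af}_\cdot)=r(\text{next edge})$ together with $h,g,f$ being mutual inverses, as in Lemma \ref{range of path}), and $B$ lies in the top filter iff $B\in r(\sm_E^{|\af|}(\mu))\cap\CI_\af$ — the same translation as in the proof of (i). The main obstacle is bookkeeping the filter/ideal memberships correctly through the maps $g_{(\af)\emptyset}$, $h_{[\af]\emptyset}$, $f_{\emptyset[\af]}$: one must keep careful track of whether an element lives in $\CB$, $\CI_\af$, or $\CI_{\af_i}$, and repeatedly use that these are mutually inverse bijections on the relevant open subspaces ([CasK1, Lemmas 4.6, 4.8]) together with the fact that intersecting with an ideal sends ultrafilters to ultrafilters or $\emptyset$. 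None of the individual steps is deep, but (ii) and (vi) require threading these identifications through several compositions without dropping a hypothesis.
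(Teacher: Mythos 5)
Your proposal is correct and takes essentially the same approach as the paper: you prove (i) via the identity $\varphi_{\af^{-1}}(\CN(\af,A))=\CN(\emptyset,A)$ obtained from the filter translation, and derive (ii)--(vi) by the same crossed-product computations, the paper merely carrying out (ii) pairwise from right to left and verifying (vi) pointwise, which is just a repackaging of your telescoping and cylinder-intersection arguments. One cosmetic correction: in (v) the membership needed is $1_{\CN(\emptyset,A)}\in D_{\af^{-1}}\cap D_{\bt^{-1}}$ rather than $D_{\af^{-1}}\cap D_{\bt}$, and it holds precisely because $A\in\CI_\af\cap\CI_\bt$.
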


\begin{proof}(i) Let $\af \in \CW^{\geq 1}$ and $A \in \CI_\af$. We first claim that  
 $$\varphi_{\af^{-1}}(\CN(\af,A))=\CN(\emptyset,A).$$
For $\mu=(e^{\mu_i}_{\xi_i})_{i \geq 1} \in \CN(\af,A)$, since  $A \in \xi_{|\af|}$, we have $ A \in h_{[\mu_{|\af|}]
\emptyset}(\xi_{|\af|})= f_{\emptyset[\mu_{|\af|+1}]}(\xi_{|\af|+1})$.  Thus, $\varphi_{\af^{-1}}(\mu)=(e^{\mu_i}_{\xi_i})_{i \geq |\af|+1} \in \CN(\emptyset,A)$. Hence,  $\varphi_{\af^{-1}}(\CN(\af,A)) \subseteq \CN(\emptyset,A)$.
For the converse, note first that $\CN(\emptyset, A) \subset U_{\af^{-1}}$. 
If $\mu \in \CN(\emptyset,A)$, then $\varphi_{\af}(\mu)=e^{\af_1}_{\eta_1} \cdots e^{\af_n}_{\eta_n}\mu$, where $\eta_n=r(\mu)\cap \CI_{\af_n}, \eta_{n-1}=f_{\emptyset[\af_n]}(\eta_n) \cap \CI_{\af_{n-1}}  , \cdots, \eta_1=f_{\emptyset[\af_2]}(\eta_2) \cap \CI_{\af_1}$. 
Since $A \in r(\mu) \cap \CI_\af \subset r(\mu) \cap \CI_{\af_n} $, we have $\varphi_{\af}(\mu) \in \CN(\af,A)$. 
So,   $\mu=\varphi_{\af^{-1}}(\varphi_\af(\mu)) \in \varphi_{\af^{-1}}(\CN(\af,A))$.
Hence, $ \CN(\emptyset,A) \subseteq \varphi_{\af^{-1}}(\CN(\af,A)) $.

Then, we have that
$$\hat{\varphi}_{\af^{-1}}(1_{\CN(\af,A)})=1_{\CN(\af,A)} \circ \varphi_\af =1_{\varphi_{\af^{-1}}(\CN(\af,A))}=1_{\CN(\emptyset,A)}.$$
Since $\hat{\varphi}_{\af^{-1}}$ and $\hat{\varphi}_{\af}$ are inverse of each other, we also have $\hat{\varphi}_{\af}(1_{\CN(\emptyset,A)})=1_{\CN(\af,A)}$.

(ii) Let $\af=\af_1 \cdots \af_n$ and $A \in \CI_\af$ such that   $A \subseteq \theta_{\af_2 \cdots \af_n}(B)$ for some $B \in \CI_{\af_1}$.
We first claim that $$1_{\CN(\af_1, B)}\dt_{\af_1}1_{\CN(\af_2 \cdots \af_n, A)}\dt_{\af_2 \cdots \af_n}=1_{\CN(\af,A)}\dt_\af.$$
The left-hand side is
\begin{align*} 1_{\CN(\af_1, B)}\dt_{\af_1}1_{\CN(\af_2 \cdots \af_n, A)}\dt_{\af_2 \cdots \af_n} &=\hat{\varphi}_{\af_1}(\hat{\varphi}_{\af_1^{-1}} (1_{\CN(\af_1, B)})1_{\CN(\af_2 \cdots \af_n, A)}) \dt_{\af} \\
&= \hat{\varphi}_{\af_1}(1_{\CN(\emptyset, B)}1_{\CN(\af_2 \cdots \af_n, A)}) \dt_{\af},
\end{align*}
where the last equality follows from (i). 
Here, for a $\mu :=e^{\af_1}_{\xi_1} \cdots e^{\af_n}_{\xi_n}\mu' \in U_\af$, 
\begin{align*} & \hat{\varphi}_{\af_1}(1_{\CN(\emptyset, B)}1_{\CN(\af_2 \cdots \af_n, A)}) (\mu) \\
 & = 1_{\CN(\emptyset, B)}1_{\CN(\af_2 \cdots \af_n, A)}(\varphi_{\af_1^{-1}}(\mu)) \\
&=1_{\CN(\emptyset, B)}1_{\CN(\af_2 \cdots \af_n, A)} (e^{\af_2}_{\xi_2} \cdots e^{\af_n}_{\xi_n}\mu') \\
&= \left\{
   \begin{array}{ll}
    1  & \hbox{if\ }  B \in f_{\emptyset[\af_2]}(\xi_2)  ~\text{and}~ A \in \xi_{n},  \\
          0 & \hbox{otherwise.}
   \end{array}
\right.
\end{align*}
On the other hand, for a $\mu =e^{\af_1}_{\xi_1} \cdots e^{\af_n}_{\xi_n}\mu' \in U_\af$,
\begin{align*} 1_{\CN(\af,A)}(\mu) &=\left\{
   \begin{array}{ll}
    1  & \hbox{if\ }   A \in \xi_{n},  \\
          0 & \hbox{otherwise.}
   \end{array}
\right.
\end{align*}
Since $ \xi_n \ni A \subseteq \theta_{\af_2 \cdots \af_n}(B) (\in \CI_{\af_2 \cdots \af_n} \subset \CI_{\af_n})$, we have  $\theta_{\af_2 \cdots \af_n}(B) \in \xi_n \cap \CI_{\af_2 \cdots \af_n}$, and hence,  we see that $ B \in f_{\emptyset[\af_2]}(\xi_2)=r(e^{\af_2}_{\xi_2} \cdots e^{\af_n}_{\xi_n})=f_{\emptyset[\af_2 \cdots \af_n]}(g_{(\af_2 \cdots \af_{n-1})\af_n}(\xi_n))$ by Lemma \ref{range of path},
 Thus, we conclude that
$$\hat{\varphi}_{\af_1}(1_{\CN(\emptyset, B)}1_{\CN(\af_2 \cdots \af_n, A)})=1_{\CN(\af,A)}.$$
Hence, it follows that
 $$1_{\CN(\af_1, B)}\dt_{\af_1}1_{\CN(\af_2 \cdots \af_n, A)}\dt_{\af_2 \cdots \af_n}=1_{\CN(\af,A)}\dt_\af.$$

Now, for $\af=\af_1 \cdots \af_n \in \CW^{\geq 1}$ and $A \in \CI_\af$ such that   $A \subseteq \theta_{\af_2 \cdots \af_n}(B)$ for some $B \in \CI_{\af_1}$, applying the preceding argument pairwise from right to left yields 
\begin{align*} & (1_{\CN(\af_1, B)}\dt_{\af_1})( 1_{\CN(\af_2, \theta_{\af_2}(B))}\dt_{\af_2}) \cdots (1_{\CN(\af_{n-1}, \theta_{\af_2 \cdots \af_{n-1}}(B))}\dt_{\af_{n-1}}) (1_{\CN(\af_n, A)}\dt_{\af_n})  \\
&=(1_{\CN(\af_1, B)}\dt_{\af_1})( 1_{\CN(\af_2, \theta_{\af_2}(B))}\dt_{\af_2}) \cdots (1_{\CN(\af_{n-1}\af_n, A)})\dt_{\af_{n-1}\af_n} \\
&~~~\vdots\\
&=(1_{\CN(\af_1, B)}\dt_{\af_1}) (1_{\CN(\af_2 \cdots \af_n, A)}\dt_{\af_2 \cdots \af_n})\\
&= 1_{\CN(\af,A)}\dt_\af,
\end{align*}
completing the proof of (ii).

(iii)  We have  $(1_{\CN(\af,A)}\dt_\af)^*=\hat{\varphi}_{\af^{-1}}(1_{\CN(\af,A)})\dt_{\af^{-1}}=1_{\CN(\emptyset,A)}\dt_{\af^{-1}}$ by (i).

(iv) Applying (i), we have that 
\begin{align*}  (1_{\CN(\af,A)}\dt_\af)(1_{\CN(\af,A)}\dt_\af)^* &=(1_{\CN(\af,A)}\dt_\af)(1_{\CN(\emptyset,A)}\dt_{\af^{-1}}) \\
&=\hat{\varphi}_\af(\hat{\varphi}_{\af^{-1}}(1_{\CN(\af,A)})1_{\CN(\emptyset,A)})\dt_\emptyset \\
&=\hat{\varphi}_\af(1_{\CN(\emptyset,A)}1_{\CN(\emptyset,A)})\dt_\emptyset \\
&=\hat{\varphi}_\af(1_{\CN(\emptyset,A)})\dt_\emptyset \\
&=1_{\CN(\af,A)}\dt_\emptyset.
\end{align*}

(v) Applying (i) again, we see that  \begin{align*} (1_{\CN(\af,A)}\dt_\af)(1_{\CN(\emptyset, A)}\dt_{\bt^{-1}}) &= \hat{\varphi}_\af(\hat{\varphi}_{\af^{-1}}(1_{\CN(\af,A)})1_{\CN(\emptyset, A)})\dt_{\af\bt^{-1}} \\
&=\hat{\varphi}_\af(1_{\CN(\emptyset,A)}1_{\CN(\emptyset, A)})\dt_{\af\bt^{-1}} \\
&=\hat{\varphi}_\af(1_{\CN(\emptyset,A)})\dt_{\af\bt^{-1}} \\
&=1_{\CN(\af,A)}\dt_{\af\bt^{-1}}. 
\end{align*}

(vi) Let $A \in \CB$, $\af \in \CW^{\geq 1}$ and $B \in \CI_\af$.
Note first that $$(1_{\CN(\emptyset,A)}\dt_\emptyset)(1_{\CN(\af,B)}\dt_\af)=(1_{\CN(\emptyset,A)}1_{\CN(\af,B)}) \dt_\af.$$
We  claim that $$1_{\CN(\emptyset,A)}1_{\CN(\af,B)}=\hat{\varphi}_\af(1_{\CN(\emptyset,B \cap \theta_\af(A))} ).$$
For a  $\mu=(e^{\mu_i}_{\xi_i}) \in \partial E$, we have that 
\begin{align*} 1_{\CN(\emptyset,A)}1_{\CN(\af,B)}(\mu) &=\left\{
   \begin{array}{ll}
    1  & \hbox{if\ }  \mu_1=\af,  ~A \in f_{\emptyset[\af]}(\xi_1) ~\text{and}~ B \in \xi_1,  \\
          0 & \hbox{otherwise}
   \end{array}
\right. \\
 &=\left\{
   \begin{array}{ll}
    1  & \hbox{if\ }  \mu_1=\af, ~ \theta_\af(A) \cap B \in \xi_1, \\
          0 & \hbox{otherwise,}
   \end{array}
\right.
\end{align*}
 and that 
 \begin{align*} \hat{\varphi}_\af(1_{\CN(\emptyset,B \cap \theta_\af(A))} )(\mu)&= 1_{\CN(\emptyset,B \cap \theta_\af(A))}(\varphi_{\af^{-1}}(\mu))\\ &=\left\{
   \begin{array}{ll}
    1  & \hbox{if\ }  B \cap \theta_\af(A) \in f_{\emptyset[\mu_2]}(\xi_2), \\
          0 & \hbox{otherwise.}
   \end{array}
\right.
\end{align*}
Here,  since 
$ h_{[\af]\emptyset}(\xi_1) \cap \CI_\af =g_{(\af)\emptyset}(h_{[\af]\emptyset}(\xi_1) )=\xi_1$, one can  see that $B \cap \theta_\af(A) \in f_{\emptyset[\mu_2]}(\xi_2)(=h_{[\af]\emptyset}(\xi_1))$ if and only if $\theta_\af(A) \cap B \in \xi_1$. Thus, we have  $1_{\CN(\emptyset,A)}1_{\CN(\af,B)}= \hat{\varphi}_\af(1_{\CN(\emptyset,B \cap \theta_\af(A))} ).$

Then it follows that
\begin{align*} (1_{\CN(\af,B)}\dt_\af)(1_{\CN(\emptyset,\theta_\af(A))}\dt_\emptyset)&=\hat{\varphi}_\af(\hat{\varphi}_{\af^{-1}}(1_{\CN(\af,B)}) 1_{\CN(\emptyset,\theta_\af(A))})\dt_\af \\
&=\hat{\varphi}_\af(1_{\CN(\emptyset,B)} 1_{\CN(\emptyset,\theta_\af(A))})\dt_\af \\
&=\hat{\varphi}_\af(1_{\CN(\emptyset,B \cap \theta_\af(A))} )\dt_\af \\
&= (1_{\CN(\emptyset,A)}1_{\CN(\af,B)})\dt_\af \\
&=(1_{\CN(\emptyset,A)}\dt_\emptyset)(1_{\CN(\af,B)}\dt_\af).  
\end{align*}
\end{proof}

\begin{lem}\label{app. unit}  For $\af, \bt \in \CW^{*}$,  $\{1_{\CN(\af,A)}\}_{A\in\CI_\bt}$ is an approximate identity for $C_0(U_{\af\bt^{-1}})$.
\end{lem}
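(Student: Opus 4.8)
The plan is to present $\{1_{\CN(\af,A)}\}_{A\in\CI_\bt}$ as a bounded net of projections in $C_0(U_{\af\bt^{-1}})$ whose supports form an upward-directed family of compact-open sets exhausting $U_{\af\bt^{-1}}$, and then to invoke the standard approximate-identity argument for the $C^*$-algebra of a locally compact Hausdorff space.

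First I would check that the net lives in $C_0(U_{\af\bt^{-1}})$ and is well-indexed. For $A\in\CI_\bt$ the set $\CN(\af,A)$ is compact-open by Lemma~\ref{compact open set} and is contained in $U_{\af\bt^{-1}}$ — indeed the proof of Lemma~\ref{open sets} gives $U_{\af\bt^{-1}}=\bigcup_{A\in\CI_\bt}\CN(\af,A)$ — so $1_{\CN(\af,A)}$ is a self-adjoint projection in $C_0(\partial E)$ vanishing off $U_{\af\bt^{-1}}$, hence lies in the ideal $C_0(U_{\af\bt^{-1}})\subseteq C_0(\partial E)$ and has norm at most $1$. Since $\CI_\bt$ is an ideal it is closed under finite unions, and Lemma~\ref{union of open sets} gives $\CN(\af,A)\cup\CN(\af,B)=\CN(\af,A\cup B)$; thus $\{\CN(\af,A):A\in\CI_\bt\}$ is a directed family of compact-open subsets of $U_{\af\bt^{-1}}$ under inclusion, and we regard $\{1_{\CN(\af,A)}\}_{A\in\CI_\bt}$ as the corresponding net of projections in $C_0(U_{\af\bt^{-1}})$.

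Next I would carry out the estimate. Given $f\in C_0(U_{\af\bt^{-1}})$ and $\ep>0$, the set $K:=\{\mu\in\partial E:|f(\mu)|\ge\ep\}$ is a compact subset of $U_{\af\bt^{-1}}=\bigcup_{A\in\CI_\bt}\CN(\af,A)$, so there are $A_1,\dots,A_m\in\CI_\bt$ with $K\subseteq\bigcup_{i=1}^{m}\CN(\af,A_i)=\CN(\af,A_0)$, where $A_0:=\bigcup_{i=1}^{m}A_i\in\CI_\bt$. Then for every $A\in\CI_\bt$ with $\CN(\af,A)\supseteq\CN(\af,A_0)$ one has $\CN(\af,A)\supseteq K$, so every $\mu\notin\CN(\af,A)$ satisfies $\mu\notin K$, i.e.\ $|f(\mu)|<\ep$; since $1_{\CN(\af,A)}f$ agrees with $f$ on $\CN(\af,A)$ and vanishes off it, this gives $\|f-1_{\CN(\af,A)}f\|_\infty=\sup_{\mu\notin\CN(\af,A)}|f(\mu)|\le\ep$. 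Hence $1_{\CN(\af,A)}f\to f$ along the net, and by commutativity of $C_0(U_{\af\bt^{-1}})$ also $f\,1_{\CN(\af,A)}\to f$; the net being bounded, it is an approximate identity for $C_0(U_{\af\bt^{-1}})$.

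The substance of the proof is the compactness extraction in the previous paragraph, and I do not anticipate a serious difficulty; the one point that warrants care — and which I would call the main obstacle — is the behaviour of the cylinders $\CN(\af,A)$ when $A$ ranges over the ideal $\CI_\bt$ rather than over $\CI_\af$, namely that they remain compact-open subsets of $U_{\af\bt^{-1}}$ and that the union formula of Lemma~\ref{union of open sets} persists for them (so that the supports genuinely form a directed family). I would settle this by reducing to the case $A\in\CI_\af$ via Lemma~\ref{lem:word from path}: for any path $\mu=(e^{\gm_k}_{\eta_k})$ with $\CP(\mu)_{1,|\af|}=\af$ the trace $\eta_{|\af|}\cap\CI_\af$ is a genuine ultrafilter of $\CI_\af$, which permits rewriting $\CN(\af,A)$ in terms of the cylinders $\CN(\af,C)$ with $\emptyset\ne C\in\CI_\af$, to which Lemmas~\ref{compact open set} and~\ref{union of open sets} apply directly; equivalently, it suffices to run the argument over the cofinal subfamily $\{\CN(\af,C):\emptyset\ne C\in\CI_\af\cap\CI_\bt\}$. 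Once this bookkeeping is in place, the argument proceeds as above.
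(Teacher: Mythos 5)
Your argument is essentially the paper's: both cover a compact subset of $U_{\af\bt^{-1}}$ by finitely many cylinders $\CN(\af,A_i)$, replace them by $\CN(\af,\bigcup_i A_i)$ via Lemma~\ref{union of open sets} and the directedness of $\CI_\bt$, and conclude from monotonicity of the supports that the tail of the net acts as the identity up to $\ep$; the only cosmetic difference is that you work with the level set $\{\mu:|f(\mu)|\ge\ep\}$ where the paper first approximates by an element of $C_c(U_{\af\bt^{-1}})$ and covers its support. The bookkeeping point you flag about $A$ ranging over $\CI_\bt$ rather than $\CI_\af$ is likewise passed over silently in the paper's own proof, so your treatment matches it in both substance and level of detail.
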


\begin{proof}Note that $\CI_\bt$ is a directed set with the order given by inclusion. To show that $\{1_{\CN(\af,A)}\}_{A \in \CI_\bt}$ is an approximate identity for   $C_0(U_{\af\bt^{-1}})$, 
let $g \in C_0(U_{\af\bt^{-1}})$ and $\epsilon > 0$ be given. 
Take $h \in C_c(U_{\af\bt^{-1}})$ such that $\|g-h\| < \epsilon$. Put $K:= \operatorname{supp}(h)$. Since $K$ is compact,  there exist $\CN(\af,A_1), \cdots, \CN(\af,A_n)$ such that $K \subseteq \bigcup_{i=1}^n \CN(\af,A_i) = \CN(\af, \bigcup_{i=1}^n A_i)$, where the last equality follows from Lemma \ref{union of open sets}.    Take $A_0:=\bigcup_{i=1}^n A_i \in \CI_\bt$. Then if $A \geq A_0$, then 
$\CN(\af, A_0) \subset \CN(\af,A)$,  and hence, we have 
\begin{align*} \|g-g1_{\CN(\af,A)}\| & \leq \| g-h \| +\| h -h1_{\CN(\af,A)}  \|+\| h1_{\CN(\af,A)} -g1_{\CN(\af,A)}  \| \\
&   \leq \epsilon + \|h-g\|\|1_{\CN(\af,A)}\| \\
& \leq 2\epsilon.
\end{align*}
 So, we are done.
\end{proof}

\begin{prop}\label{generator of the p.c.p} Let $C^*(\{1_{\CN(\emptyset,A)}\dt_\emptyset, 1_{\CN(\af,B)}\dt_\af\}) \subseteq C_0(\partial E) \rtimes_{\hat{\varphi}} \mathbb{F} $ denote the $C^*$-subalgebra generated by $\{1_{\CN(\emptyset,A)}\dt_\emptyset, 1_{\CN(\af,B)}\dt_\af: A \in \CB ~\text{and}~\af \in \CL, B \in \CI_\af \}$. Then 
$$C_0(\partial E) \rtimes_{\hat{\varphi}} \mathbb{F} = C^*(\{1_{\CN(\emptyset,A)}\dt_\emptyset, 1_{\CN(\af,B)}\dt_\af\}).$$
\end{prop}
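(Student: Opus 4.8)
The plan is to establish the two inclusions separately, the content being entirely in the reverse one. Recall that, by definition, $C_0(\partial E)\rtimes_{\hat\varphi}\mathbb{F}$ is the closed linear span of the elements $f\dt_t$ with $t\in\mathbb{F}$ and $f\in D_t=C_0(U_t)$. Each $1_{\CN(\emptyset,A)}$ lies in $D_\emptyset=C_0(\partial E)$, and by Lemma~\ref{compact open set} each $1_{\CN(\af,B)}$ is the characteristic function of a compact-open subset of $\partial E$ contained in $U_\af$, hence lies in $D_\af$; so the distinguished generators lie in the partial crossed product, which gives the inclusion $C^*(\{1_{\CN(\emptyset,A)}\dt_\emptyset,1_{\CN(\af,B)}\dt_\af\})\subseteq C_0(\partial E)\rtimes_{\hat\varphi}\mathbb{F}$. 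For the reverse inclusion it is enough to show $f\dt_t\in C^*(\{1_{\CN(\emptyset,A)}\dt_\emptyset,1_{\CN(\af,B)}\dt_\af\})$ for every $t\in\mathbb{F}$ and every $f\in D_t$, and since $D_t=0$ unless $t$ is $\emptyset$, or $\af$ or $\bt^{-1}$ with $\af,\bt\in\CW^{\geq 1}$, or a reduced word $\af\bt^{-1}$ with $\af,\bt\in\CW^{\geq 1}$, it suffices to treat these four families. From now on write $C^*(\cdots)$ for the subalgebra in the statement.

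The first step is the diagonal part $t=\emptyset$. For $\af=\af_1\cdots\af_n\in\CW^{\geq 1}$ and $A\in\CI_\af$, the hypothesis of Lemma~\ref{computations}(ii) holds automatically, since it is precisely the defining property of the ideal $\CI_\af$; that lemma then exhibits $1_{\CN(\af,A)}\dt_\af$ as a product of the distinguished generators $1_{\CN(\af_i,\cdot)}\dt_{\af_i}$, so $1_{\CN(\af,A)}\dt_\af\in C^*(\cdots)$, and Lemma~\ref{computations}(iv) gives $1_{\CN(\af,A)}\dt_\emptyset=(1_{\CN(\af,A)}\dt_\af)(1_{\CN(\af,A)}\dt_\af)^*\in C^*(\cdots)$. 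As this is also immediate for $\af=\emptyset$, the $*$-homomorphism $f\mapsto f\dt_\emptyset$ of $C_0(\partial E)$ into the partial crossed product sends each of the generators $\{1_{\CN(\af,A)}:\af\in\CW^*,\,A\in\CI_\af\}$ of $C_0(\partial E)$ (Lemma~\ref{C_0}) into $C^*(\cdots)$, whence $C_0(\partial E)\dt_\emptyset\subseteq C^*(\cdots)$.

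The three remaining families are handled by one device: approximate $f$ by $f\cdot 1_{\CN(\af,A)}$ using the approximate identities of Lemma~\ref{app. unit}, and factor the approximants through the diagonal part and the distinguished generators. For $t=\af\in\CW^{\geq 1}$ and $f\in C_0(U_\af)$, Lemma~\ref{app. unit} (with $\bt=\emptyset$, so that $A$ ranges over $\CB$) gives $f\cdot 1_{\CN(\af,A)}\to f$, while $(f\cdot 1_{\CN(\af,A)})\dt_\af=(f\dt_\emptyset)(1_{\CN(\af,A)}\dt_\af)\in C^*(\cdots)$ by the previous paragraph, so $f\dt_\af\in C^*(\cdots)$. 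For $t=\bt^{-1}$ with $\bt\in\CW^{\geq 1}$ and $f\in C_0(U_{\bt^{-1}})$, one passes to adjoints: $(f\dt_{\bt^{-1}})^*=\hat\varphi_\bt(f)\dt_\bt\in C^*(\cdots)$ by the case just treated, hence $f\dt_{\bt^{-1}}\in C^*(\cdots)$. For a reduced $t=\af\bt^{-1}$ with $\af,\bt\in\CW^{\geq 1}$ and $f\in C_0(U_{\af\bt^{-1}})$, I would first note that $U_{\af\bt^{-1}}=\bigcup_{A\in\CI_\af\cap\CI_\bt}\CN(\af,A)$ --- an ultrafilter arising as the $|\af|$-th coordinate of a path of $U_\af$ and meeting $\CI_\bt$ already meets $\CI_\af\cap\CI_\bt$, since its intersection with $\CI_\af$ is an ultrafilter of $\CI_\af$ --- so that $\{1_{\CN(\af,A)}\}_{A\in\CI_\af\cap\CI_\bt}$ remains an approximate identity for $C_0(U_{\af\bt^{-1}})$; then $(f\cdot 1_{\CN(\af,A)})\dt_{\af\bt^{-1}}=(f\dt_\emptyset)(1_{\CN(\af,A)}\dt_\af)(1_{\CN(\bt,A)}\dt_\bt)^*$ by Lemma~\ref{computations}(iii) and (v), each factor belongs to $C^*(\cdots)$, and letting $A$ grow gives $f\dt_{\af\bt^{-1}}\in C^*(\cdots)$. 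This exhausts the cases and finishes the proof.

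I expect all the algebra with the product and involution of the partial crossed product to be routine, as it is already isolated in Lemma~\ref{computations}; the only point requiring a little care --- and the main obstacle, such as it is --- is the cross case $t=\af\bt^{-1}$, where the approximate identity supplied by Lemma~\ref{app. unit} is indexed by $\CI_\bt$ but the factorization of Lemma~\ref{computations}(v) needs $A\in\CI_\af\cap\CI_\bt$. One must therefore check that restricting the net to $\CI_\af\cap\CI_\bt$ still yields an approximate identity for $C_0(U_{\af\bt^{-1}})$; this follows from the above description of $U_{\af\bt^{-1}}$, from Lemma~\ref{union of open sets}, and from the fact that $\CI_\af\cap\CI_\bt$ is closed under finite unions (being an intersection of ideals), so that any compact subset of $U_{\af\bt^{-1}}$ is contained in a single $\CN(\af,A)$ with $A\in\CI_\af\cap\CI_\bt$.
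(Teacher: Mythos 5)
Your proof is correct and follows essentially the same route as the paper: the diagonal part $C_0(\partial E)\dt_\emptyset$ is absorbed via Lemmas \ref{C_0} and \ref{computations}(ii),(iv), and the off-diagonal elements $f\dt_t$ are recovered from it using the approximate identities of Lemma \ref{app. unit} together with the factorizations in Lemma \ref{computations}(iii),(v); your restriction of the index set to $\CI_\af\cap\CI_\bt$ in the case $t=\af\bt^{-1}$ is a correct tightening of a point the paper treats only implicitly. The one adjustment you should make is in the case $t=\af$: index the approximate identity by $A\in\CI_\af$ rather than all of $\CB$ (this is still an approximate identity for $C_0(U_\af)$, since $\eta_{|\af|}\cap\CI_\af\neq\emptyset$ for every path in $U_\af$ by Lemma \ref{lem:word from path}), because membership of $1_{\CN(\af,A)}\dt_\af$ in the generated subalgebra has been established, via Lemma \ref{computations}(ii), only for $A\in\CI_\af$.
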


\begin{proof} Let $C_0(\partial E)\dt_\emptyset$ denote the canonical image of $C_0(\partial E)$ in $C_0(\partial E) \rtimes_{\hat{\varphi}} \mathbb{F}$. We first claim that
$C_0(\partial E)\dt_\emptyset \subset C^*(\{1_{\CN(\emptyset,A)}\dt_\emptyset, 1_{\CN(\af,B)}\dt_\af\})$. To prove this, it is enough to show that $1_{\CN(\af,A)}\dt_\emptyset \in C^*(\{1_{\CN(\emptyset,A)}\dt_\emptyset, 1_{\CN(\af,B)}\dt_\af\})$ for $\af \in \CW^{\geq 1}$ and $A \in \CI_\af$ by Lemma \ref{C_0}. 
Let $\af=\af_1 \cdots \af_n$ and $B \in \CI_{\af_1}$ such that $A \subseteq \theta_{\af_2 \cdots \af_n}(B)$.
Then, by Lemma \ref{computations}(ii),(iv) we have 
\begin{align*} 1_{\CN(\af,A)}\dt_\emptyset &= (1_{\CN(\af,A)}\dt_\af)(1_{\CN(\af,A)}\dt_\af)^* \\
&=(1_{\CN(\af_1, B)}\dt_{\af_1} 1_{\CN(\af_2, \theta_{\af_2}(B))}\dt_{\af_2} \cdots 1_{\CN(\af_n, A)}\dt_{\af_n}) \\
& \hskip 3pc (1_{\CN(\af_1, B)}\dt_{\af_1} 1_{\CN(\af_2, \theta_{\af_2}(B))}\dt_{\af_2} \cdots 1_{\CN(\af_n, A)}\dt_{\af_n})^*,
\end{align*}
completing the proof.

Next we show that $C_0(\partial E) \rtimes_{\hat{\varphi}} \mathbb{F} = C^*(\{1_{\CN(\emptyset,A)}\dt_\emptyset, 1_{\CN(\af,B)}\dt_\af\})$. It is clear that 
$C^*(\{1_{\CN(\emptyset,A)}\dt_\emptyset, 1_{\CN(\af,B)}\dt_\af\}) \subseteq C_0(\partial E) \rtimes_{\hat{\varphi}} \mathbb{F}$. To show the reverse inclusion, 
let $f_t\dt_t \in C_0(\partial E) \rtimes_{\hat{\varphi}} \mathbb{F}$ with $f_t \in  D_t= C_0(U_t)$ and  $t \in \mathbb{F}$. If $t=\emptyset$, then $f_\emptyset \dt_\emptyset \in C^*(\{1_{\CN(\emptyset,A)}\dt_\emptyset, 1_{\CN(\af,B)}\dt_\af\})$ by Lemma \ref{C_0}. 
If $t \neq \emptyset$, we may assume that $t=\af\bt^{-1}$ with $\af,\bt \in \CW^*$.
Since $\{1_{\CN(\af,A)}\}_{A\in\CI_\bt}$ is an approximate identity for $C_0(U_{\af\bt^{-1}})$ by Lemma \ref{app. unit}, we have $ \lim_{A \to\infty} g1_{\CN(\af,A)}=g$ for $g \in C_0(U_{\af\bt^{-1}})$. Thus, 
  we have 
\begin{align*} f_{\af\bt^{-1}}\dt_{\af\bt^{-1}} &= \big( \lim_{A \to\infty} f_{\af\bt^{-1}}1_{\CN(\af,A)}\big) \dt_{\af\bt^{-1}}  \\
&=  \lim_{A \to\infty} \big( f_{\af\bt^{-1}}1_{\CN(\af,A)} \dt_{\af\bt^{-1}} \big) \\
&= \lim_{A \to\infty} \big( f_{\af\bt^{-1}}\dt_\emptyset \big)\big( 1_{\CN(\af,A)} \dt_{\af\bt^{-1}} \big) \\
&= \lim_{A \to\infty} \big( f_{\af\bt^{-1}}\dt_\emptyset \big) \big( 1_{\CN(\af,A)} \dt_{\af} \big) \big( 1_{\CN(\emptyset,A)} \dt_{\bt^{-1}} \big)\\
&=  \lim_{A \to\infty} \big( f_{\af\bt^{-1}}\dt_\emptyset \big) \big( 1_{\CN(\af,A)} \dt_{\af} \big) \big( 1_{\CN(\beta,A)} \dt_{\bt} \big)^*,
\end{align*}
where the fourth equality follows from Lemma \ref{computations}(v), and the fifth equality follows from Lemma \ref{computations}(iii).
We then can conclude, using the case $t=\emptyset$ above, that $ f_{\af\bt^{-1}}\dt_{\af\bt^{-1}} \in C^*(\{1_{\CN(\emptyset,A)}\dt_\emptyset, 1_{\CN(\af,B)}\dt_\af\}) $. So, we are done. 
\end{proof}

\section{Partial actions and generalized Boolean dynamical system groupoids are isomorphic}\label{sec: groupoids}

Let  $(\CB,\CL,\theta, \CI_\af)$ be a generalized Boolean dynamical system,  $E=(E^1,d,r)$ be the associated  topological correspondence and  $\Phi=(\{U_t\}_{t \in \mathbb{F}}, \{\varphi_t\}_{t \in \mathbb{F}})$ be the partial action of  $\mathbb{F}$ on $\partial E$. 
In this section, we show that the groupoid associated with the partial action  is isomorphic to the boundary path groupoid $\Gamma(\partial E, \sm_E)$ associated with the topological correspondence as studied in \cite[Sect. 7]{CasK1}.   As a result, we have that 
 the partial crossed product $C^*$-algebras obtained from the partial action  is isomorphic to the $C^*$-algebra of the generalized Boolean dynamical system.

 We first recall the boundary path groupoid $\Gamma(\partial E, \sm_E)$. To easy notation, put $\sm:=\sm_E$.

\begin{dfn}\label{boundary path groupoid 2} Let  $(\CB,\CL,\theta, \CI_\af)$ be a generalized Boolean dynamical system and $E=(E^1,d,r)$ be the associated  topological correspondence.  We define the {\it boundary path groupoid}  $\Gamma(\partial E, \sigma_E)$ to be the Renault-Deaconu groupoid, that  is, 
\begin{align*} \Gamma &:= \Gamma(\partial E, \sigma_E) \\
& =\{(\mu, k-l, \nu) \in \partial E \times \Z \times \partial E : \mu \in \text{dom}(\sigma^k), \nu \in \text{dom}(\sigma^l),  
 \sigma^k( \mu)=\sigma^l(\nu)\}.
\end{align*}
The unit space is defined by $\Gamma^{(0)}:=\{(\mu, 0 , \mu): \mu \in \partial E\}$. For $(\mu, n, \nu), (\nu, m, \dt) \in \Gamma(\partial E, \sigma_E) $, we define the multiplication and the inverse by 
\begin{align*} (\mu, n, \nu)(\nu, m, \dt)&:=(\mu, n + m,\dt), \\
(\mu, n, \nu)^{-1}&:=( \nu , -n,\mu),
\end{align*}
and the range map and  the source map by
$$r_\Gamma(\mu, n, \nu) := (\mu, 0, \mu), \hskip 1pc  s_\Gamma(\mu, n, \nu):= (\nu, 0, \nu).$$
Define the topology on $ \Gamma(\partial E, \sigma_E) $ to be generated by the basic open set 
\begin{align}\CU(U,k_1,k_2,V):=\{(\mu, k_1-k_2, \nu):\mu \in U, \nu \in V, \sigma^{k_1}(\mu)= \sigma^{k_2}(\nu)\},\end{align}
where $U \subseteq \text{dom}(\sm^{k_1})$, $V \subseteq \text{dom}(\sm^{k_2})$ are open in $\partial E$, $\sm^{k_1}$ is injective on $U$ and $\sm^{k_2}$ is injective on $V$. 
\end{dfn}

We note that  $\Gamma(\partial E, \sigma_E)$ is a locally compact  Hausdorff  ample groupoid and that the unit space $\Gamma^{(0)}$ is identified with $\partial E$.

Now consider  the groupoid 
$$\CG:=\mathbb{F} \ltimes_\varphi \partial E =\{(\mu,t,\nu) \in \partial E \times \mathbb{F} \times \partial E: \nu \in U_{t^{-1}} ~\text{and}~ \mu=\varphi_t(\nu)\}$$ associated to the  partial action  $\Phi=(\{U_t\}_{t \in \mathbb{F}}, \{\varphi_t\}_{t \in \mathbb{F}})$ of  $\mathbb{F}$ on $\partial E$ as in Section \ref{section partial actions}. The unit space $\CG^0$ is also identified with $\partial E$. 

The following characterization of elements in $\CG$ is used throughout this section without reference. 
\begin{lem} We have that $(\mu, t, \nu) \in \CG$ if and only if $(\mu, t, \nu)  $ is such that  $\CP(\mu)_{1,|\af|}=\af$ and $\CP(\nu)_{1,|\bt|}=\bt$ 
for some $\af,\bt \in \CW^*$, $t=\af\bt^{-1}$ and $\varphi_{\af^{-1}}(\mu)=\varphi_{\bt^{-1}}(\nu)$. 
\end{lem}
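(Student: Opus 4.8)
The plan is to unwind the definition of $\CG=\mathbb{F}\ltimes_\varphi\partial E$ and match it against the structure of $\mathbb{F}$ as a free group, using the concrete description of the sets $U_t$ and maps $\varphi_t$ built in Section \ref{sec: partial action partial E}.

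First I would handle the reverse implication, which is essentially bookkeeping. Suppose $\af,\bt\in\CW^*$ with $\CP(\mu)_{1,|\af|}=\af$, $\CP(\nu)_{1,|\bt|}=\bt$, $t=\af\bt^{-1}$, and $\varphi_{\af^{-1}}(\mu)=\varphi_{\bt^{-1}}(\nu)$. From $\CP(\nu)_{1,|\bt|}=\bt$ one reads off $\nu\in U_\bt$, hence $\nu\in\mathrm{dom}(\varphi_{\bt^{-1}})$. One must check $\nu\in U_{t^{-1}}=U_{\bt\af^{-1}}$, which requires the extra condition $\eta_{|\bt|}\cap\CI_\af\neq\emptyset$ where $\nu=(e^{\gm_k}_{\eta_k})$; this follows because $\varphi_{\af^{-1}}(\mu)=\varphi_{\bt^{-1}}(\nu)$ lies in $U_{\af^{-1}}$ (its range meets $\CI_\af$), and $r(\varphi_{\bt^{-1}}(\nu))=h_{[\bt_{|\bt|}]\emptyset}(\eta_{|\bt|})$, so $h_{[\bt_{|\bt|}]\emptyset}(\eta_{|\bt|})\cap\CI_\af\neq\emptyset$, which via the mutually-inverse pair $g_{(\bt_{|\bt|})\emptyset},h_{[\bt_{|\bt|}]\emptyset}$ gives $\eta_{|\bt|}\cap\CI_\af\neq\emptyset$. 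Then, since the partial action is semi-saturated and $t=\af\bt^{-1}$ is in reduced form (or reduces to a shorter reduced form, handled the same way), $\varphi_t=\varphi_\af\circ\varphi_{\bt^{-1}}$ on the appropriate domain, so $\varphi_t(\nu)=\varphi_\af(\varphi_{\bt^{-1}}(\nu))=\varphi_\af(\varphi_{\af^{-1}}(\mu))=\mu$; thus $(\mu,t,\nu)\in\CG$.

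For the forward implication, suppose $(\mu,t,\nu)\in\CG$, so $\nu\in U_{t^{-1}}$ and $\mu=\varphi_t(\nu)$. Since $U_{t^{-1}}\neq\emptyset$, the definition of the $U_g$'s forces $t$ to be of the form $\af\bt^{-1}$ with $\af,\bt\in\CW^*$ (these are exactly the group elements with nonempty domain); here I'd write $t$ in the unique reduced form dictated by that description, i.e.\ $\af\bt^{-1}$ reduced or with the obvious cancellations, and note $U_{t^{-1}}=U_{\bt\af^{-1}}$. From $\nu\in U_{\bt\af^{-1}}$ the definition gives $\CP(\nu)_{1,|\bt|}=\bt$ directly. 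Applying $\varphi_t=\varphi_\af\circ\varphi_{\bt^{-1}}$ (semi-saturation again) to $\nu$: first $\varphi_{\bt^{-1}}(\nu)$ deletes the first $|\bt|$ edges, then $\varphi_\af$ prepends an $\af$-labelled prefix, so $\mu=\varphi_t(\nu)$ has $\CP(\mu)_{1,|\af|}=\af$. Finally, $\varphi_{\af^{-1}}(\mu)=\varphi_{\af^{-1}}(\varphi_\af(\varphi_{\bt^{-1}}(\nu)))=\varphi_{\bt^{-1}}(\nu)$, which is the required identity.

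The main obstacle is the careful handling of reduced forms and the edge cases where $\af$ or $\bt$ is empty or where $\af\bt^{-1}$ admits cancellation (e.g.\ $\af=\bt\af'$), together with verifying that the relevant compositions $\varphi_\af\circ\varphi_{\bt^{-1}}$ really equal $\varphi_{\af\bt^{-1}}$ on the nose — this is where Proposition \ref{prop:partial action} (semi-saturatedness) and the explicit formulas for $\varphi_{\af\bt^{-1}}$ must be invoked precisely, and where one must confirm the domains match rather than just the formulas. I expect the orthogonality of the partial action to be what rules out any other shape for $t$, keeping the case analysis finite and clean.
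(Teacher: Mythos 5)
Your proposal is correct and follows essentially the same route as the paper: both directions are obtained by unwinding the definition of $\CG$, using that $U_t\neq\emptyset$ forces $t=\af\bt^{-1}$ with $\af,\bt\in\CW^*$, that $U_{\bt\af^{-1}}\subseteq U_\bt$, the explicit form of $\varphi_{\af\bt^{-1}}$, and the semi-saturation identity $\varphi_\af\circ\varphi_{\bt^{-1}}\subseteq\varphi_{\af\bt^{-1}}$. Your extra verification that $\eta_{|\bt|}\cap\CI_\af\neq\emptyset$ (via the filter/ideal intersection argument) is just an explicit filling-in of a domain check the paper leaves implicit, so no substantive difference.
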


\begin{proof} If  $(\mu, t, \nu) \in \CG$, then $\nu \in U_{t^{-1}}$, and hence, $U_t \neq \emptyset$. Thus, $t=\af\bt^{-1}$ for some $\af, \bt \in \CW^*$. Since $U_{t^{-1}}=U_{\bt\af^{-1}} \subset U_\bt$, we have $\CP(\nu)_{1,|\bt|}=\bt$. Also, since $\mu=\varphi_{\af\bt^{-1}}(\nu)$, we have
$\CP(\mu)_{1,|\af|}=\af$ and  $\varphi_{\af^{-1}}(\mu) =\varphi_{\af^{-1}}(\varphi_{\af\bt^{-1}}(\nu))=\varphi_{\bt^{-1}}(\nu) $.

Let $(\mu, t, \nu)  $ be such that  $\CP(\mu)_{1,|\af|}=\af$ and $\CP(\nu)_{1,|\bt|}=\bt$ 
for some $\af,\bt \in \CW^*$, $t=\af\bt^{-1}$ and $\varphi_{\af^{-1}}(\mu)=\varphi_{\bt^{-1}}(\nu)$.
Then, $\mu=\varphi_\af(\varphi_{\bt^{-1}}(\nu))=\varphi_{\af\bt^{-1}}(\nu)$ and $\nu \in U_{\bt\af^{-1}}$. So,  $(\mu, t, \nu) \in \CG$.
\end{proof}

\begin{lem} The groupoid $\CG=\mathbb{F} \ltimes_\varphi \partial E$ is an ample groupoid.
\end{lem}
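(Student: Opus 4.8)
The statement to prove is that $\CG = \mathbb{F} \ltimes_\varphi \partial E$ is an ample groupoid. Recall that a groupoid is ample if it is étale and its unit space has a basis of compact-open sets; equivalently, it has a basis of compact-open bisections. Since $\partial E$ has a basis of compact-open sets (Remark \ref{basis of partial E}) and $\CG^{(0)}$ is identified with $\partial E$, the unit space already has the right structure, so the essential content is to exhibit a basis of compact-open bisections for $\CG$.

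The plan is to use the description of the topology on $\CG$ inherited from the product topology on $\partial E \times \mathbb{F} \times \partial E$ (with $\mathbb{F}$ discrete). For a fixed $t = \af\bt^{-1} \in \mathbb{F}$ in reduced form with $\af, \bt \in \CW^*$, a basic open set of $\CG$ is of the form $(W \times \{t\} \times W') \cap \CG$ where $W, W'$ are open in $\partial E$. I would first observe that $\{t\} \times \CG$ (the $t$-fibre) is homeomorphic, via the source map, to the open set $U_{t^{-1}} = U_{\bt\af^{-1}} \subseteq \partial E$: indeed $(\varphi_t(\nu), t, \nu) \leftrightarrow \nu$ is a homeomorphism, since $\varphi_t$ is a homeomorphism $U_{t^{-1}} \to U_t$ by Proposition \ref{homeo} and the general theory of partial actions. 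Consequently, for each compact-open subset $N \subseteq U_{t^{-1}}$, the set $\{(\varphi_t(\nu), t, \nu) : \nu \in N\}$ is a compact-open bisection of $\CG$. Concretely, using Lemma \ref{open sets}, $U_{\bt\af^{-1}} = \bigcup_{A \in \CI_\af} \CN(\bt, A)$, and each $\CN(\bt,A)$ is compact-open by Lemma \ref{compact open set}; so the sets $\bigl\{(\varphi_{\af\bt^{-1}}(\nu), \af\bt^{-1}, \nu) : \nu \in \CN(\bt, A)\bigr\}$, for $A \in \CI_\af$, form a collection of compact-open bisections.

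Next I would check that these sets form a basis for the topology on $\CG$, i.e.\ that every basic open set $(W \times \{t\} \times W') \cap \CG$ is a union of such sets. This follows because, on the $t$-fibre, the first and third coordinates are both continuous functions of $\nu$ (namely $\varphi_t$ and the identity), so the subspace topology on the $t$-fibre agrees with the topology transported from $U_{t^{-1}}$; since the $\CN(\bt,A) \cap N$ with $N$ compact-open form a basis for $U_{t^{-1}}$, we are done. Finally, to conclude ampleness I would verify that the range and source maps restrict to homeomorphisms on each such bisection onto their (open) images in $\partial E$: the source map sends the bisection onto $\CN(\bt,A)$, and the range map sends it onto $\varphi_{\af\bt^{-1}}(\CN(\bt,A))$, which is open since $\varphi_{\af\bt^{-1}}$ is a homeomorphism; both restrictions are injective by the bisection property. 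This gives an étale groupoid with a basis of compact-open bisections, hence ample. (Alternatively, one could simply invoke that the transformation groupoid of a partial action of a discrete group on a totally disconnected locally compact Hausdorff space with a basis of compact-open sets is always ample — see \cite{ExelBook} — but I would spell out the bisections explicitly as above since they will be needed later.)

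The main obstacle is not conceptual but bookkeeping: one must be careful that the homeomorphism $U_{t^{-1}} \to \{t\}$-fibre is genuinely a homeomorphism onto a subspace of $\CG$ (this requires the product topology and continuity of $\varphi_t$, already established), and that the compact-open sets $\CN(\bt,A)$ indeed exhaust a basis of the open set $U_{\bt\af^{-1}}$ rather than just cover it — this is where Lemma \ref{basis} (closure under intersection) and Lemma \ref{union of open sets} together with Remark \ref{basis of partial E} are needed to see that $\{\CN(\bt,A) : A \in \CI_\af\}$ (and their intersections with other cylinders) form a basis for the subspace topology on $U_{\bt\af^{-1}}$.
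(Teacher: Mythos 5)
Your proposal is correct and follows essentially the same route as the paper: étaleness via the fibrewise identification of the $t$-fibre with the open set $U_{t^{-1}}$ (the paper projects onto the first coordinate, you onto the third), and ampleness from discreteness of $\mathbb{F}$ together with the basis of compact-open sets for $\partial E$ from Remark \ref{basis of partial E}. (Your aside that each $\CN(\bt,A)$ with $A\in\CI_\af$ is compact-open slightly overreaches Lemma \ref{compact open set}, but your argument does not actually need it, since arbitrary compact-open subsets of $U_{t^{-1}}$ already supply the bisections.)
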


\begin{proof} We first show that $\CG$ is $\acute{e}$tale. Let $(\mu, \af\bt^{-1}, \nu) \in \CG$. Let $W=\varphi_{\af^{-1}}(U_\af) \cap \varphi_{\bt^{-1}}(U_{\bt})$, and put $W_\af=\varphi_\af(W)$ and $W_\bt=\varphi_\bt(W)$.  Then $$U:=(W_\af \times \{\af\bt^{-1}\} \times W_\bt) \cap \CG$$ is an open neighborhood of $(\mu, \af\bt^{-1}, \nu) (=(\varphi_{\af\bt^{-1}}(\nu), \af\bt^{-1}, \varphi_{\bt\af^{-1}}(\mu)))$ such that $r_\CG|_U$ is just the projection onto the first coordinate, and is thus a  homeomorphism. Thus, $\CG$ is $\acute{e}$tale. Since then $\mathbb{F}$ is discrete and $\partial E$ has a basis of compact-open sets by Remark \ref{basis of partial E}, it follows that $\CG$ has a basis of compact open sets. So, $\CG$ is an ample groupoid. 
\end{proof}

\begin{thm}\label{iso groupoids} The map $\Theta: \mathbb{F} \ltimes_\varphi \partial E \to \Gamma(\partial E, \sm_E)$ defined by 
$$\Theta((\mu,\af\bt^{-1}, \nu))=(\mu, |\af|-|\bt|, \nu)$$ is a groupoid isomorphism. 
\end{thm}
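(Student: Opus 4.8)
The plan is to show that $\Theta$ is a well-defined bijective groupoid homomorphism whose inverse is also continuous. I would organize the argument into four stages: (1) well-definedness, (2) the homomorphism property, (3) bijectivity, and (4) bicontinuity.

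\textbf{Well-definedness.} The formula for $\Theta$ depends a priori on the expression $t=\af\bt^{-1}$, but an element $t\in\mathbb{F}$ has a unique reduced form, and by the structure of the partial action (the nonempty domains $U_t$ occur precisely for $t=\af\bt^{-1}$ in reduced form with $\CI_\af\cap\CI_\bt\neq\emptyset$), the pair $(\af,\bt)$ with $\CP(\mu)_{1,|\af|}=\af$, $\CP(\nu)_{1,|\bt|}=\bt$ is forced; so $|\af|-|\bt|$ is unambiguous. I must then check that $(\mu,|\af|-|\bt|,\nu)$ genuinely lies in $\Gamma(\partial E,\sm_E)$. Setting $k=|\af|$, $l=|\bt|$: since $\mu\in U_\af$ we have $\mu\in\text{dom}(\sm^k)$ because $\varphi_{\af^{-1}}=\sm^{|\af|}|_{U_\af}$ (this follows from the description of $\varphi_{\af^{-1}}$ as removing the first $|\af|$ coordinates, iterating Proposition \ref{homeo}), and likewise $\nu\in\text{dom}(\sm^l)$; finally $\sm^k(\mu)=\varphi_{\af^{-1}}(\mu)=\varphi_{\bt^{-1}}(\nu)=\sm^l(\nu)$ by the Lemma characterizing $\CG$. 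Conversely I should note $k-l$ need not equal $|\af|-|\bt|$ for \emph{arbitrary} representations, but the \emph{canonical} pair attached to $(\mu,t,\nu)\in\CG$ gives exactly this value.

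\textbf{Homomorphism and bijectivity.} For the homomorphism property, take composable $(\mu,\af\bt^{-1},\nu),(\nu,\gm\dt^{-1},\rho)\in\CG$; their product is $(\mu,\af\bt^{-1}\gm\dt^{-1},\rho)$, and I need $\Theta$ of this to be $(\mu,(|\af|-|\bt|)+(|\gm|-|\dt|),\rho)$. The subtlety is that $\bt^{-1}\gm$ may cancel; but writing $\bt=\gm_0\bt'$, $\gm=\gm_0\gm'$ for the maximal common prefix $\gm_0$ (or symmetrically), the reduced form of the product is $\af\gm'(\dt\bt')^{-1}$, and $|\af|+|\gm'| - (|\dt|+|\bt'|) = (|\af|-|\bt|)+(|\gm|-|\dt|)$ since $|\bt|=|\gm_0|+|\bt'|$ and $|\gm|=|\gm_0|+|\gm'|$. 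Compatibility of the canonical pairs (the second path $\nu$'s prefixes match up because $\varphi_{\bt^{-1}}(\nu)=\sm^{|\bt|}(\nu)$ lies in the relevant domain) then gives the claim; this uses that the partial action is semi-saturated. For injectivity: if $\Theta(\mu,\af\bt^{-1},\nu)=\Theta(\mu',\gm\dt^{-1},\nu')$ then $\mu=\mu'$, $\nu=\nu'$, so $\af=\CP(\mu)_{1,|\af|}$; but $|\af|-|\bt|=|\gm|-|\dt|$ alone does not force $|\af|=|\gm|$ — here I use that $\af$ and $\gm$ are \emph{both} prefixes of $\CP(\mu)$ and $\bt,\dt$ both prefixes of $\CP(\nu)$ with $\sm^{|\af|}(\mu)=\sm^{|\bt|}(\nu)$ and $\sm^{|\gm|}(\mu)=\sm^{|\dt|}(\nu)$; subtracting, WLOG $|\af|\leq|\gm|$, and $\sm^{|\gm|-|\af|}$ applied to $\sm^{|\af|}(\mu)$ equals $\sm^{|\dt|-|\bt|}$ applied to $\sm^{|\bt|}(\nu)=\sm^{|\af|}(\mu)$, forcing (by the shift being genuinely ``shortening'' on these finite-or-infinite paths and the fact that two prefixes of the same path of equal length are equal) $|\af|=|\gm|$, hence $\af=\gm$, $\bt=\dt$, $\af\bt^{-1}=\gm\dt^{-1}$. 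For surjectivity: given $(\mu,n,\nu)\in\Gamma$ with $\sm^k(\mu)=\sm^l(\nu)$, $n=k-l$, set $\af=\CP(\mu)_{1,k}$, $\bt=\CP(\nu)_{1,l}$ (these lie in $\CW^*$ by Lemma \ref{lem:word from path}); then $\mu\in U_\af$, $\nu\in U_\bt$, $\varphi_{\af^{-1}}(\mu)=\sm^k(\mu)=\sm^l(\nu)=\varphi_{\bt^{-1}}(\nu)$, and one checks $\nu\in U_{\bt\af^{-1}}$ (i.e.\ $\eta_{|\bt|}\cap\CI_\af\neq\emptyset$, which holds because $r(\sm^l(\nu))\in\CI_\af$ as $\mu\in U_\af$), so $(\mu,\af\bt^{-1},\nu)\in\CG$ maps to $(\mu,n,\nu)$.

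\textbf{Bicontinuity.} Finally I would show $\Theta$ and $\Theta^{-1}$ are continuous by matching basic open sets. A basic open set of $\CG$ is $(W_\af\times\{\af\bt^{-1}\}\times W_\bt)\cap\CG$ as in the ample-groupoid lemma; its image under $\Theta$ sits inside $\CU(W_\af,|\af|,|\bt|,W_\bt)$, since $\sm^{|\af|}=\varphi_{\af^{-1}}$ is injective on $U_\af\supseteq W_\af$ and likewise for $\bt$ — so $\Theta$ is continuous (and open onto its image). Conversely, a basic open set $\CU(U,k_1,k_2,V)$ of $\Gamma$ can be refined by sets of the form $\CN(\af,A)$-type cylinders on which $\CP$ is locally constant equal to a fixed prefix of length $k_1$ resp.\ $k_2$, and on such refined pieces $\Theta^{-1}$ is exactly $(\mu,k_1-k_2,\nu)\mapsto(\mu,\af\bt^{-1},\nu)$, manifestly continuous.

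\textbf{The main obstacle} I anticipate is the bookkeeping in the homomorphism step: correctly tracking how the canonical prefix-pair of a product behaves under cancellation in $\mathbb{F}$, and verifying that the $\eta$-datum (the ultrafilters decorating the edges produced by $\varphi_\af$) is consistent with what the shift produces — i.e.\ that $\Theta$ really does respect composition on the nose and not merely ``up to the $\Z$-coordinate.'' Semi-saturation of $\Phi$ (Proposition \ref{prop:partial action}) and the identity $\varphi_{\af^{-1}}=\sm^{|\af|}|_{U_\af}$ are the two facts that make this work, and I would isolate the latter as a preliminary lemma before proving the theorem.
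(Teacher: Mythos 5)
Your overall architecture matches the paper's proof (well-definedness via the reduced form, the prefix bookkeeping for the homomorphism property, bijectivity, then topology), and most steps are sound: well-definedness, surjectivity (which you do in more detail than the paper's ``clearly onto''), and the continuity sketch are fine, while the homomorphism computation is essentially the paper's once you observe that composability forces $\bt$ and $\gm$ to be prefixes of the \emph{same} word $\CP(\nu)$, so that one of $\bt',\gm'$ is empty; note that your formula for the product, $\af\gm'(\dt\bt')^{-1}$, is only valid in that degenerate case, since in general $\af\bt'^{-1}\gm'\dt^{-1}$ does not rearrange to that form.

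The genuine gap is in the injectivity step. Write $\omega=\sm^{|\af|}(\mu)=\sm^{|\bt|}(\nu)$ and $k=|\gm|-|\af|$, which by the assumed equality of the $\Z$-coordinates equals $|\dt|-|\bt|$. The two sides you compare are then both $\sm^{k}(\omega)$, so the displayed equality holds for \emph{every} $k$ and forces nothing; and ``the shift is genuinely shortening'' cannot force $k=0$ when $\omega$ is infinite: for a periodic path coming from a loop one has $\sm^{k}(\omega)=\omega$ for all $k$, and indeed with non-reduced representatives the same arrow of $\CG$ can be written with $(\af,\bt)=(a,\emptyset)$ or $(aa,a)$, so $|\af|=|\gm|$ is not a consequence of membership in $\CG$ together with equal $\Z$-coordinates alone. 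What actually closes the argument is one of two things you did not use. Either (i) reducedness: $\af,\gm$ are prefixes of $\CP(\mu)$ and $\bt,\dt$ of $\CP(\nu)$, so WLOG $\gm=\af\gm'$ and $\dt=\bt\dt'$ with $|\gm'|=|\dt'|=k$, and both $\gm'$ and $\dt'$ equal $\CP(\omega)_{1,k}$; if $k>0$ this common last letter contradicts $\gm\dt^{-1}$ being reduced, whence $k=0$. Or (ii) the paper's route, which never needs $|\af|=|\gm|$: from $\varphi_{\af^{-1}}(\mu)=\varphi_{\bt^{-1}}(\nu)$ one gets $\CP(\mu)=\af\zeta$ and $\CP(\nu)=\bt\zeta$ with a common tail $\zeta$, similarly $\CP(\mu)=\gm\eta$, $\CP(\nu)=\dt\eta$, and a short computation (using the equality $|\af|-|\bt|=|\gm|-|\dt|$ to match word lengths) yields $\gm=\af\gm'$ and $\dt=\bt\gm'$ for the same $\gm'$, hence $\af\bt^{-1}=\gm\dt^{-1}$ in $\mathbb{F}$, which is all injectivity requires. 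As written, your ``forcing $|\af|=|\gm|$'' is not a proof and needs to be replaced by one of these arguments.
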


\begin{proof} We first show that $\Theta$ is well-defined. For $t=(\af\dt)(\bt\dt)^{-1}$ such that $t=\af\bt^{-1}$ is in reduced form. Then $|\af\dt|-|\bt\dt|=|\af|-|\bt|$. Since $\Theta$ only changes the second coordinate, it follows that $\Theta$ is well-defined.

Clearly, $\Theta$ is onto. 
To show that $\Theta$ is injective, let $\Theta((\mu,\af\bt^{-1},\nu))=\Theta((\gm, st^{-1}, \dt))$.  Then $\mu=\gm$ and $\nu=\dt$, and hence,  $\CP(\mu)=\CP(\gm)$ and $\CP(\nu)=\CP(\dt)$.
Say $\CP(\mu)=\af\zeta$, $\CP(\nu)=\bt\zeta$, $\CP(\gm)=s\eta$ and $\CP(\dt)=t\eta$. Then, 
$\af\zeta=s\eta$ and $\bt\zeta=t\eta$. We may assume that $\af=s\af'$ (if not, then $s=\af s'$ and the same arguments holds). Then, $\af\zeta=s\af'\zeta=s\eta$, implying that $\eta=\af'\zeta$. Then, $\bt\zeta=t\af'\zeta$, which implies that $\bt=t\af'$. Then we compute in $\mathbb{F}$:
$$\af\bt^{-1}=s\af'(t\af')^{-1}=st^{-1}.$$
Thus, $\Theta$ is injective. 

We next note that it is clear that $(\Theta \times \Theta) (\CG^{(2)}) \subseteq \Gamma^{(2)}$. Now, to show that $\Theta$ preserves multiplication, let $(\mu,\af\bt^{-1}, \nu), (\nu, st^{-1}, \dt) \in \CG$. If $\bt=s\bt'$ (if not, then $s=\bt s'$ and the same arguments holds), then 
\begin{align*} \Theta((\mu,\af\bt^{-1}, \nu)(\nu, st^{-1}, \dt)) &= \Theta((\mu, \af(s\bt')^{-1}st^{-1}, \dt)) \\
&=\Theta((\mu, \af (t\bt')^{-1}, \dt)) \\
&=(\mu, |\af|-|t\bt'|, \dt),
\end{align*}
and 
\begin{align*}  \Theta((\mu,\af\bt^{-1}, \nu))\Theta((\nu, st^{-1}, \dt)) &= (\mu, |\af|-|\bt|, \nu)(\nu, |s|-|t|, \dt) \\
&= (\mu,|\af|-|s|-|\bt'|+  |s|-|t|  , \dt) \\
&=(\mu, |\af|-|t\bt'|,\dt).
\end{align*}
Thus,  $ \Theta((\mu,\af\bt^{-1}, \nu)(\nu, st^{-1}, \dt))=\Theta((\mu,\af\bt^{-1}, \nu))\Theta((\nu, st^{-1}, \dt)) $. 

Lastly, to show that $\Theta$ is a homeomorphism, since both $\Gamma(\partial E, \sm_E)$ and $\CG$ are ample groupoids, it suffices to show that their unit spaces are homeomorphic. But, this follows from the fact that both $\CG^{(0)}$ and $\Gamma^{(0)}$ are homeomorphic to $\partial E$. So, we are done.
\end{proof}

\begin{cor}\label{cor: iso} Let  $(\CB,\CL,\theta, \CI_\af)$ be a generalized Boolean dynamical system. Then we have  $$C^*(\CB,\CL,\theta,\CI_\af) \cong C_0(\partial E) \rtimes_{\hat{\varphi}} \mathbb{F}.$$ 
\end{cor}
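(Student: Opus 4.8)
The plan is to obtain this isomorphism as a short chain of three identifications, the decisive one being the groupoid isomorphism already established in Theorem~\ref{iso groupoids}; essentially no new computation is required, only the invocation of standard facts about groupoid $C^*$-algebras and partial crossed products.

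Concretely, I would argue as follows. First, invoke the groupoid model of \cite[Sect.~7]{CasK1}: there is an isomorphism $C^*(\CB,\CL,\theta,\CI_\af)\cong C^*(\Gamma(\partial E,\sm_E))$, where the right-hand side is the full groupoid $C^*$-algebra of the boundary path groupoid. Second, note that the groupoid isomorphism $\Theta\colon\mathbb{F}\ltimes_\varphi\partial E\to\Gamma(\partial E,\sm_E)$ of Theorem~\ref{iso groupoids} is a homeomorphism of ample (in particular \'etale) groupoids that restricts to the identity on the common unit space $\partial E$; consequently it carries the canonical (counting-measure) Haar system of one onto that of the other and hence induces an isomorphism $C^*(\mathbb{F}\ltimes_\varphi\partial E)\cong C^*(\Gamma(\partial E,\sm_E))$. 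Third, apply the general fact that, for a partial action of a discrete group $G$ on a locally compact Hausdorff space $X$, the full $C^*$-algebra of the transformation groupoid $G\ltimes_\varphi X$ is canonically isomorphic to the partial crossed product $C_0(X)\rtimes_{\hat{\varphi}}G$ (\cite{A}; see also \cite{ExelBook}), specialized to $G=\mathbb{F}$ and $X=\partial E$, with $\hat{\varphi}$ the induced $C^*$-algebraic partial action. Composing the three steps yields
\[
C^*(\CB,\CL,\theta,\CI_\af)\cong C^*(\Gamma(\partial E,\sm_E))\cong C^*(\mathbb{F}\ltimes_\varphi\partial E)\cong C_0(\partial E)\rtimes_{\hat{\varphi}}\mathbb{F}.
\]

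The work is almost entirely bookkeeping, and that is where the care is needed: one must ensure all three $C^*$-algebras are taken in the full (universal) sense so that each step is a literal isomorphism, and one should record explicitly why $\Theta$ respects the \'etale structure and Haar systems — this is immediate because $\Theta$ alters only the middle, group-valued coordinate and is the identity on units, hence maps standard basic bisections to standard basic bisections. No amenability assumption is needed for the statement as phrased; if one also wished to identify the reduced versions, one would invoke amenability of $\Gamma(\partial E,\sm_E)$ from \cite{CasK1}. I would present the groupoid route above, but I note that a more self-contained alternative is available: from Lemma~\ref{computations} one checks that $\{1_{\CN(\emptyset,A)}\dt_\emptyset,\ 1_{\CN(\af,B)}\dt_\af\}$ is a $(\CB,\CL,\theta,\CI_\af)$-representation, obtains a surjection $C^*(\CB,\CL,\theta,\CI_\af)\twoheadrightarrow C_0(\partial E)\rtimes_{\hat{\varphi}}\mathbb{F}$ from universality together with Proposition~\ref{generator of the p.c.p}, and proves injectivity via a gauge-invariant uniqueness theorem using the gauge action coming from the length grading of $\mathbb{F}$.
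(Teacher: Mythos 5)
Your proposal is correct and follows essentially the same route as the paper: the paper's proof likewise chains $C^*(\CB,\CL,\theta,\CI_\af)\cong C^*(\Gamma(\partial E,\sm_E))$ from \cite[Corollary 7.11]{CasK1}, the groupoid isomorphism of Theorem \ref{iso groupoids}, and Abadie's identification $C^*(\mathbb{F}\ltimes_\varphi\partial E)\cong C_0(\partial E)\rtimes_{\hat{\varphi}}\mathbb{F}$ from \cite{A}. Your alternative sketch via a $(\CB,\CL,\theta,\CI_\af)$-representation and gauge-invariant uniqueness is exactly what the paper records in the remark following the corollary.
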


\begin{proof} By \cite{A}, we have $C_0(\partial E) \rtimes_{\hat{\varphi}} \mathbb{F} \cong C^*(\mathbb{F} \ltimes_\varphi \partial E)$. Hence, we have that
$$C^*(\CB,\CL,\theta,\CI_\af) \cong C^*(\Gamma(\partial E, \sm_E)) \cong C^*(\mathbb{F} \ltimes_\varphi \partial E) \cong C_0(\partial E) \rtimes_{\hat{\varphi}} \mathbb{F},$$  
where the first isomorphism follows from \cite[Corollary 7.11]{CasK1}.
\end{proof}

\begin{remark} By \cite[Theorem 4.3]{ExelLaca2003}, if $N: \CL \to (0, \infty)$ is any function, then there exists a unique strongly continuous one-parameter group $\sm$ of automorphism of 
$C_0(\partial E) \rtimes_{\hat{\varphi}} \mathbb{F}$ such that 
\[\sm_t(f\dt_\af)=N(\af)^{it}f \dt_\af ~\text{and}~ \sm_t(g\dt_\emptyset)=g \dt_\emptyset \]
for all $t \in \mathbb{R}$, $\af \in \CL$, $f \in D_\af$ and $g \in D_\emptyset$.
If we let $N(\af)=\exp(1)$ for every $\af \in \CL$, then we obtain a strongly continuous action (using the same notation) $\sm: \mathbb{T} \to \operatorname{Aut}(C_0(\partial E) \rtimes_{\hat{\varphi}} \mathbb{F})$ such that 
\[\sm_z(f\dt_\af)=zf \dt_\af ~\text{and}~ \sm_z(g\dt_\emptyset)=g \dt_\emptyset \]
for all $t \in \mathbb{R}$, $\af \in \CL$, $f \in D_\af$ and $g \in D_\emptyset$.

Then, one also can directly show that there is a $*$-isomorphism $\psi$ from $C^*(\CB,\CL,\theta,\CI_\af)$ onto $C_0(\partial E) \rtimes_{\hat{\varphi}} \mathbb{F}$ such that 
$$\psi(p_A)=1_{\CN(\emptyset,A)}\dt_\emptyset  ~\text{and}~ \psi(s_{\af,B})=1_{\CN(\af,B)}\dt_\af$$
for  $A \in \CB$, $\af \in \CL$ and $B \in \CI_\af$ by showing that 
 $$\{1_{\CN(\emptyset,A)}\dt_\emptyset, 1_{\CN(\af,B)}\dt_\af: A \in \CB, \af \in \CL ~\text{and}~B \in \CI_\af \}$$ is a $(\CB, \CL,\theta,\CI_\af)$-representation in  $C_0(\partial E) \rtimes_{\hat{\varphi}} \mathbb{F}$ and using the gauge-invariant uniqueness theorem.
\end{remark}

\section{Partial actions of the free group on Stone spaces}\label{section model}

In this section, we show how we can model certain partial actions of the free group using Boolean dynamical systems. We then consider the C*-algebraic version of this result by looking at partial actions of the free group on commutative C*-algebras generated by projections.

\begin{thm}\label{partial action model}(cf. \cite[Theorem 7.10]{BCGW}) Let $X$ be a Stone space, that is, a dual of a Boolean algebra, and let 
$\rho=(\{V_t\}_{t \in \mathbb{F}}, \{\rho_t\}_{t \in \mathbb{F}})$ be a semi-saturated, orthogonal topological partial action of a free group $\mathbb{F}$ on $X$ such that $V_\af$ is clopen for all generators $\af$ of $\mathbb{F}$. Then there exist a  Boolean dynamical system $(\CB,\CL,\theta)$ and a homeomorphism $f: X \to \partial E$, such that $f$ is equivariant with respect to the actions $\rho$ and $\varphi$, where $\partial E$ is the boundary path space associated with $(\CB,\CL,\theta, \CR_\af)$ and $\varphi$ is the partial action given in Section 3. In particular, $\mathbb{F} \ltimes_\rho X$ and  $\mathbb{F} \ltimes_\varphi \partial E $ are isomorphic as topological groupoids.
\end{thm}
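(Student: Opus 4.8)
The plan is to reconstruct the combinatorial data of a generalized Boolean dynamical system directly from the partial action $\rho$, using the structure of the Stone space $X$ and the clopen sets $V_\af$, and then verify that the boundary path space $\partial E$ of the resulting system is equivariantly homeomorphic to $X$.

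\textbf{Construction of the Boolean dynamical system.} Let $\CB$ be the Boolean algebra of compact-open subsets of $X$, so that $X \cong \widehat{\CB}$ by Stone duality. Let $\CL$ be the given set of generators of $\mathbb{F}$. For each $\af \in \CL$, the homeomorphism $\rho_\af \colon V_{\af^{-1}} \to V_\af$ between clopen sets induces a map on compact-open sets; I would define $\theta_\af \colon \CB \to \CB$ by $\theta_\af(A) = \rho_{\af^{-1}}(A \cap V_\af)$ (the ``pullback'' along $\rho_\af$), which is readily checked to be a Boolean homomorphism with $\theta_\af(\emptyset) = \emptyset$, hence an action. This gives a Boolean dynamical system $(\CB,\CL,\theta)$, and we take the ideals $\CI_\af = \CR_\af$; note that $\CR_\af$ corresponds under Stone duality to the compact-open subsets of $\overline{V_{\af^{-1}}}$ (or the relevant domain), and the orthogonality of $\rho$ on the generators is what makes the ``directions'' in $\mathbb{F}$ behave like edge labels without overlap. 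The extra hypothesis that each $V_\af$ is clopen is exactly what is needed so that the pullback of a compact-open set stays compact-open and the system is well-defined.

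\textbf{Identifying $X$ with $\partial E$.} Having built $(\CB,\CL,\theta,\CR_\af)$, form its inverse semigroup $S$ and its tight spectrum $\mathsf{T}$; by Theorem~\ref{thm:iso.tight.boundary} we have $\mathsf{T} \cong \partial E$. The strategy is then to produce a homeomorphism $f \colon X \to \mathsf{T}$. Each point $x \in X$ has a well-defined ``itinerary'' under $\rho$: the set of $t \in \mathbb{F}$ with $x \in V_{t^{-1}}$, and semi-saturatedness forces this to be determined by a reduced word $\af = \af_1\af_2\cdots \in \CW^{\leq\infty}$ together with, at each finite stage, an ultrafilter on the appropriate $\CI_{\af_{1,n}}$ recording which compact-open neighborhoods $x$ lies in after applying $\rho_{\af_{1,n}^{-1}}$. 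This is precisely the data of a tight filter on $E(S)$ as catalogued in Theorem~\ref{char:tight}. I would define $f(x)$ to be this tight filter, check it is a bijection (injectivity from $X$ being Hausdorff with a compact-open basis; surjectivity by running the construction backwards), and check continuity both ways using that the basic compact-open sets $V_{e:e_1,\ldots,e_n}$ of $\mathsf{T}$ (Remark~\ref{basis of T}) pull back to compact-open sets of $X$ built from the $V_t$'s and elements of $\CB$.

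\textbf{Equivariance and the groupoid isomorphism.} Finally, I would verify that $f \circ \rho_t = \varphi_t \circ f$ on $V_{t^{-1}}$ for every $t \in \mathbb{F}$. By semi-saturatedness and Proposition~\ref{prop:partial action} it suffices to check this for $t$ a single generator $\af$ and its inverse, where on one side $\rho_\af$ is (essentially) a shift-type map and on the other $\varphi_{\af^{-1}} = \sm_E|_{U_\af}$ by the proof of Proposition~\ref{homeo}; the matching is then a direct consequence of how $\theta_\af$ was defined from $\rho_\af$. Given the equivariant homeomorphism $f$, the induced map $(\mu,t,\nu) \mapsto (f^{-1}(\mu),t,f^{-1}(\nu))$ is a topological groupoid isomorphism $\mathbb{F} \ltimes_\varphi \partial E \to \mathbb{F} \ltimes_\rho X$, since both transformation groupoids are subspaces of $(\text{space}) \times \mathbb{F} \times (\text{space})$ with product topology and the group coordinate is untouched.

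\textbf{The main obstacle} I anticipate is the bookkeeping in the middle step: showing that an arbitrary point of $X$ really does give rise to a \emph{tight} filter (not merely a filter), i.e.\ matching the dichotomy in Theorem~\ref{char:tight} --- infinite reduced itinerary versus finite itinerary terminating at a singular ultrafilter --- against the topological behavior of $\rho$ near $x$ (whether $x$ eventually leaves every $V_{t^{-1}}$, and if so whether the terminal ultrafilter meets $\CB_{reg}$). Pinning down exactly which compact-open sets of $X$ correspond to $\CR_\af$ and to the regular sets $\CB_{reg}$, and confirming this is consistent with the range/source structure of $\partial E$, is where the argument needs care; this is the analogue of the labeled-graph computation in \cite[Theorem 7.10]{BCGW} and I would follow that template closely.
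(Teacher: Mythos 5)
Your proposal is correct and follows essentially the same route as the paper: the same Boolean dynamical system $(\CB,\CL,\theta)$ with $\theta_\af(A)=\rho_{\af^{-1}}(A\cap V_\af)$ on the algebra of compact-open sets, the same itinerary-plus-ultrafilter construction of $f$, and the same reduction of equivariance to generators via semi-saturation, with your detour through the tight spectrum $\mathsf{T}$ being immaterial in view of Theorem \ref{thm:iso.tight.boundary}. The one step you defer --- that a finite itinerary terminates at an ultrafilter containing no regular sets --- is settled in the paper exactly along the lines you anticipate, by removing from a compact-open neighborhood the finitely many clopen sets $V_\af$ it meets and appealing to \cite[Lemma 7.9]{CasK1}.
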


\begin{proof} Let $\CL$ be the set of generators of $\mathbb{F}$ and $\CB$ the set of compact open subsets of $X$. For each $\af \in \CL$, define $\theta_\af: \CB \to \CB$ by $\theta_\af(A)=\rho_{\af^{-1}}(A \cap  V_\af)$.
Then, we clearly have $\theta_\af(\emptyset)=\rho_{\af^{-1}}(\emptyset)=\emptyset$ and 
 $\theta_\af(A\cup B)= \theta_\af(A) \cup \theta_\af(B)$. Since $\rho_{\af^{-1}}$ is bijective for each $\af \in \CL$, we have 
\begin{align*} \theta_\af(A\cap B) &=\rho_{\af^{-1}}( (A \cap B) \cap  V_\af) \\
&=\rho_{\af^{-1}}( (A \cap V_\af) \cap (B \cap V_\af)) \\
&=\rho_{\af^{-1}}( A \cap V_\af)  \cap \rho_{\af^{-1}} (B \cap V_\af) \\
&=\theta_\af(A) \cap \theta_\af(B).
\end{align*}
Also, since we have 
$\theta_\af(A \setminus B) \cap \theta_\af(B)=\theta_\af(\emptyset)=\emptyset$ and 
$$\theta_\af(A \setminus B) \cup \theta_\af(B) =\theta_\af(A\cup B)=\theta_\af(A)\cup \theta_\af(B)= (\theta_\af(A) \setminus \theta_\af(B)) \cup \theta_\af(B),$$
 it follows that $\theta_\af(A \setminus B) =\theta_\af(A) \setminus \theta_\af(B)$.
Thus, $ \theta_\af$ is an action for each $\af \in \CL$, and hence, $(\CB,\CL,\theta)$ is a Boolean dynamical system. 
Note that  $\af\in\Delta_A (=\{\af \in \CL: \theta_\af(A)=\rho_{\af^{-1}}(A \cap V_\af) \neq \emptyset\})$ if and only if $A\cap V_{\af}\neq\emptyset$.

For each $\af \in \CL$, we claim that 
\begin{align*} \CR_\af&=\{A \in \CB: A \subseteq \rho_{\af^{-1}} (B \cap V_\af) ~\text{for some}~ B \in \CB \} \\
&= \{A \in \CB: A \subseteq V_{\af^{-1}} \}. 
\end{align*}
It is clear that $\CR_\af \subset \{A \in \CB: A \subseteq V_{\af^{-1}} \}$. 
For the converse inclusion,  choose $A\in\CB$ such that $A\subseteq V_{\af^{-1}}$ and take $B=\rho_{\af}(A)\subseteq V_{\af}$. Then $A=\rho_{\af^{-1}}(B)\in\CR_\af$.

Notice that we have $X= \big( X \setminus \bigsqcup_{\af \in \CL} V_\af \big) \sqcup \big( \bigsqcup_{\af \in \CL} V_\af \big )$ since the partial action $\rho$ is orthogonal.
We first associate an element of $\CW^{\leq \infty}$  with a given $x \in X$ as follows: 
given $x \in X$,  there is either a unique letter $\af_1 \in \CL$ such that $x \in V_{\af_1}$ or $x \in  X \setminus \bigsqcup_{\af \in \CL} V_\af $. In the first case, the same dichotomy applies to $\rho_{\af_1^{-1}}(x)$, and so either there is a unique letter $\af_2 \in \CL$ such that $\rho_{\af_1^{-1}}(x) \in V_{\af_2}$ or $\rho_{\af_1^{-1}}(x) \in X \setminus \bigsqcup_{\af \in \CL} V_\af$. 
We  note here that $\rho_{\af_2^{-1}}(\rho_{\af_1^{-1}}(x))=\rho_{(\af_1\af_2)^{-1}}(x)$ since the partial action is semi-saturated, and that $x \in V_{\af_1\af_2}$. 
This process might either stop and we get a finite word $\af_1 \cdots \af_n$ (possibly the empty word) or it does not stop and we get an infinite word $\af_1\af_2 \cdots$. We denote by $\af:=\af_x$ this word and observe 
 that $x \in V_{\af_{1,i}}$ for all $1 \leq i \leq |\af_x|$ (understanding that $i \leq |\af_x|$ means $i < \infty$ if $|\af_x|=\infty$).
 Since $\rho_{(\af_{2,i})^{-1}}(V_{\af_1^{-1}} \cap V_{\af_2}) \neq \emptyset$,  we have $\CR_{\af_{1,i}} \neq \{\emptyset\}$ for all $1 \leq i \leq |\af_x|$, and hence, we have a path $\af_x \in \CW^{\leq \infty}$.

 Now,  if $|\af_x|=0$, we define $\eta=\{A\in\CB:x\in A\}$. If  $|\af_x|\geq 1$, we  define $\eta_i=\{ A \in \CR_{\af_i}: \rho_{\af_{1,i}}^{-1}(x) \in A\}$  for each $1 \leq i \leq |\af_x|$.  Then, $\eta$ and $\eta_i$ are  ultrafilters for all $1 \leq i \leq |\af_x|$ and we have 
\begin{align*}  & f_{\emptyset[\af_{i+1}]}(\eta_{i+1}) \\ &= \{ A \in \CB: \theta_{\af_{i+1}}(A) \in \eta_{i+1}\} \\
&=\{  A \in \CB: \rho_{\af_{1,i+1}^{-1}}(x) \in \rho_{\af_{i+1}^{-1}}(A \cap V_{\af_{i+1}})  \} \\
&=\{ A \in \CB : \rho_{\af_{1,i}^{-1}}(x) \in A \cap V_{\af_{i+1}}  \} \\
&=h_{[\af_i]\emptyset}(\eta_i)
\end{align*}
for  $1 \leq i < |\af_x|$ . 
We then define a map $f: X \to \partial E$ by 
\[f(x)=\begin{cases}
\eta & \text{if } |\af_x|=0, \\
(e^{\af_i}_{\eta_i})_{1 \leq i \leq |\af_x|} &\text{if }|\af_x|\geq 1.
\end{cases}\]
Then $f$ is well-defined and  
it is easy to see that the map $f$ is injective.

To show that $f$ is surjective, let $\mu=(e^{\af_i}_{\xi_i}) \in \partial E$. We first note that we cannot have $|\CP(\mu)| \geq 1$ and $f_{\emptyset[\af_1]}(\xi_1)=\emptyset$, because if $A \in \xi_1$, then 
$\theta_{\af_1}(\rho_{\af_1}(A))=A$ so that $\rho_{\af_1}(A) \in f_{\emptyset[\af_1]}(\xi_1)$. 
So, $r(\mu) \neq \emptyset$ for any given $\mu$. Now, let $x_\mu \in X$ be the element corresponding to $r(\mu) $ via Stone duality. 
We then claim that $f(x_\mu)=\mu$. 
Suppose first that $\CP(\mu)=\emptyset$. So, $\mu \in E_{sg}^0$.
If $x_\mu \in \bigsqcup_{\af \in \CL} V_\af$,  then there exits $\af \in \CL$ such that $x_\mu \in V_\af$. Choose $A \in \CB $ such that $x_\mu \in A$. Then,  $x_\mu \in A \cap V_\af (\in \CB)$, and hence,  $A \cap V_\af \in \mu$. On the other hand, since $\Delta_{A \cap V_\af}=\{\af\}$ and  $ \big( A \cap V_\af \big) \cap \big( X \setminus \bigsqcup_{\af \in \CL} V_\af \big)=\emptyset$,  we have $A \cap V_\af \in \CB_{\reg}$, which contradicts to  \cite[Lemma 7.9]{CasK1}.
Hence, in this case, $\af_{x_\mu}=\emptyset$ and $f(x_\mu)=\mu$. 
Suppose next that $|\CP(\mu)| \geq 1$.   We prove that,  for $1 \leq i \leq |\CP(\mu)|$, $x_\mu \in V_{\af_{1,i}}$ and $\xi_i$ is the ultrafilter in $\CR_{\af_{i}}$ corresponding to the point $\rho_{\af_{1,i}^{-1}}(x_\mu)$. 
For $i=1$, we must have $x_\mu \in V_{\af_1}$ because otherwise we would find $A \in f_{\emptyset[\af_1]}(\xi_1)$ such that $\theta_{\af_1}(A)=\emptyset$ which does not exist. 
Next, suppose that $\xi_1$ corresponds to a point $y \in X$  different from $\rho_{\af_{1}^{-1}}(x_\mu) $. We then have an element $B \in \xi_1$  such that 
$y \in B$  but $\rho_{\af_{1}^{-1}}(x_\mu) \notin B$. 
Then $A:=\rho_{\af_1}(B) \in \CB$ is such that $\theta_{\af_1}(A)=B$ so that $A \in f_{\emptyset[\af_1]}(\xi_1)$, but $x_\mu \notin A$, which is a contradiction. 
Using induction and repeating the same argument we conclude that $\xi_i$ is the ultrafilter in $\CR_{\af_i}$ corresponding to $\rho_{\af_{1,i}^{-1}}(x_\mu)$.
Now, if $|\af|$ is finite, as for the case of the empty word, we have that $\rho_{\af^{-1}}(x_\mu) \in   X \setminus \bigsqcup_{\af \in \CL} V_\af $, and hence, $\af_{x_\mu}=\af$ and $f(x_\mu)=\mu$. On the other hand, if $|\af|=\infty$, then $\rho_{\af_{1,i}^{-1}}(x) \notin X \setminus \bigsqcup_{\af \in \CL} V_\af $ for all $i$ and again we have  $\af_{x_\mu}=\af$ and $f(x_\mu)=\mu$.

To show that $f$ is continuous, observe that  if $\af \in \CL$ and $A \in \CR_\af$, then $A \subseteq V_{\af^{-1}}$ and $A'=\rho_\af(A)$ is such that $f(A')=\CN(\af,A)$.  More generally, for $\af\in\CW^{\geq 1}$ and $A \in \CR_\af$, we have  $A \subseteq V_{\af_n^{-1}}$, and one then also can see that   $A'=\rho_\af(A)$ is such that $f(A')=\CN(\af,A)$. Lastly,  for  $A \in \CB$, we have 
\begin{align}\label{open to open}f(A)=\CN(\emptyset, A).
\end{align}
Then, for $\af \in \CW^*   $, $A \in \CR_\af$, $\af_1, \cdots, \af_n \in \CW^* $ and $A_i \in \CR_{\af_i}$ for $i=1, \cdots, n$, we have 
$$f^{-1}\Big(\CN(\af,A)\setminus(\cup_{i=1}^n\CN(\af_i,A_i))\Big)=A'\setminus(\cup_{i=1}^n A'_i)\in\CB,$$
where $A'=\rho_\af(A)$ and $A_i'=\rho_{\af_i}(A_i)$ for $i=1, \cdots, n$. 
Now, by the observation  in the proof of Lemma \ref{compact open set} and Remark \ref{basis of T}, we have a basis of $\partial E$ consisting of sets of the form $\CN(\af,A)\setminus(\cup_{i=1}^n\CN(\af_i,A_i))$. Thus, $f$ is continuous. 
 Also, By (\ref{open to open}), we see that $f$ is an open map. Hence, $f$ is a homeomorphism.

 To prove that $f$ is equivariant, it is enough to prove that 
\begin{align}\label{equivariant}f(\rho_{\af^{-1}}(x))=\varphi_{\af^{-1}}(f(x))
\end{align} for all $\af \in \CL$ and all $x \in V_\af$ since $\rho$ and $\varphi$ are semi-saturated. To show (\ref{equivariant}), we only need to check that  $r(\varphi_{\af^{-1}}(f(x)))$ is the ultrafilter in $\CB$ corresponding to $\rho_{\af^{-1}}(x)$.   Now, if $f(x)=(e^{\af_i}_{\eta_i})_{1 \leq i \leq |\af_x|}$, where $\af_1=\af$ and $\eta_1=\{A \in \CR_\af: \rho_{\af^{-1}}(x) \in A\}$, then 
we have $\varphi_{\af^{-1}}(f(x)) =(e^{\af_i}_{\eta_i})_{2 \leq i \leq |\af_x|}$. It thus easily follows that 
\begin{align*} r(\varphi_{\af^{-1}}(f(x))) & =h_{[\af]\emptyset}(\eta_1) \\
& =\{A \in \CB: \rho_{\af^{-1}}(x)\in  A \}.
\end{align*}

Lastly, since $f$ is an equivariant homeomorphism, it is straightforward to check that the map $F:  \mathbb{F} \ltimes_\rho X \to  \mathbb{F} \ltimes_\varphi \partial E $  given by $F((x,t,y))= (f(x),t,f(y))$ is an isomorphism of topological groupoids. 
\end{proof}

We now look at the C*-algebraic version of Theorem \ref{partial action model}. For that, we prove a few lemmas first.

\begin{lem}\label{orthogonal projections}
Let $\CA$ be a commutative C*-algebra and $\{p_1,\cdots,p_n\}$ a family of projections. Then, there exists a family of mutually orthogonal projections $\{q_1,\cdots,q_m\}$ such that for all $i=1,\ldots,n$, there is $I_i\subseteq \{1,\ldots,m\}$ such that $p_i=\sum_{j\in I_i}q_j$.
\end{lem}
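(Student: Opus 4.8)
The plan is to build the mutually orthogonal family $\{q_1,\dots,q_m\}$ as the atoms of the finite Boolean algebra of projections generated by $\{p_1,\dots,p_n\}$. More concretely, for each subset $S\subseteq\{1,\dots,n\}$ I would form the candidate projection
\[
q_S := \prod_{i\in S} p_i \cdot \prod_{i\notin S}(1-p_i),
\]
where the product is taken in the unitization $\widetilde{\CA}$ if $\CA$ is non-unital (note $1-p_i$ makes sense there, and each $q_S$ still lies in $\CA$ because it has the factor $p_i$ for every $i\in S$; the only possibly problematic term is $S=\emptyset$, but that $q_\emptyset=\prod_i(1-p_i)$ will simply be discarded). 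Since $\CA$ is commutative, all the $p_i$ commute, so each $q_S$ is a self-adjoint idempotent, hence a projection, and for $S\neq S'$ one has $q_Sq_{S'}=0$ because some index $i$ lies in exactly one of $S,S'$, forcing a factor $p_i(1-p_i)=0$. I would then let $\{q_1,\dots,q_m\}$ be an enumeration of those $q_S$ with $S\neq\emptyset$ (and one may further drop any $q_S$ that happen to be zero, though this is not necessary).

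Next I would verify the decomposition formula. The key identity is that $1=\prod_{i=1}^n\big(p_i+(1-p_i)\big)=\sum_{S\subseteq\{1,\dots,n\}}q_S$ in $\widetilde{\CA}$, obtained by expanding the product and using commutativity. Multiplying this identity by $p_k$ and using that $p_k q_S = q_S$ when $k\in S$ while $p_k q_S = 0$ when $k\notin S$ (again because of a $p_k(1-p_k)$ factor), I get
\[
p_k = \sum_{S\ni k} q_S,
\]
and since every such $S$ is nonempty, this exhibits $p_k=\sum_{j\in I_k}q_j$ with $I_k:=\{\,j : k\in S_j\,\}$, as required. The passage through $\widetilde{\CA}$ is harmless: the final equation $p_k=\sum_{S\ni k}q_S$ is an identity among elements of $\CA$.

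**I do not expect any serious obstacle here**; the statement is essentially the observation that finitely many commuting projections generate a finite-dimensional commutative algebra isomorphic to $\C^m$, whose minimal projections are the $q_S$. The only points requiring a word of care are (i) handling the non-unital case by working in $\widetilde{\CA}$ and checking that the $q_S$ we keep actually belong to $\CA$, and (ii) noting that discarding the zero $q_S$'s or the empty-set term does not affect the formula for $p_k$ since $k$ itself is never in $\emptyset$. Both are routine, so the lemma follows.
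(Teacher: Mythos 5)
Your proof is correct and is exactly the standard disjointification argument that the paper invokes (the paper's own proof is just the one-line remark ``this follows a usual disjointification of sets'' with a citation). Your explicit construction of the $q_S$ in the unitization, the orthogonality via a $p_i(1-p_i)$ factor, and the identity $p_k=\sum_{S\ni k}q_S$ — together with the observation that these $q_S$ lie in $\CA$ — fill in precisely what that reference supplies, so there is nothing to correct.
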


\begin{proof}
This follows a usual disjointification of sets. See for instance \cite[Lemma 4.1]{BC1}.
\end{proof}

\begin{lem}\label{commutative projections}
Let $\CA$ be a commutative C*-algebra generated by its projections and $\CB$ the set of all projections of $\CA$. Then $\CB$ is a Boolean algebra with operations given by $p\cap q=pq$, $p\cup q=p+q-pq$ and $p\setminus q=p-pq$ for $p,q\in\CB$. Moreover $\CA\cong C_0(\WCB)$, where $\WCB$ is the Stone dual of $\CB$.
\end{lem}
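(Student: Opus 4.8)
The plan is to verify the Boolean algebra axioms directly on $\CB$, the set of projections of $\CA$, using the operations $p\cap q=pq$, $p\cup q=p+q-pq$, $p\setminus q=p-pq$, and then to identify $\CA$ with $C_0(\WCB)$ via Gelfand duality together with Stone duality. First I would check that these operations are well-defined, i.e.\ that $pq$, $p+q-pq$ and $p-pq$ are again projections whenever $p,q$ are; since $\CA$ is commutative this is a short computation ($pq$ is clearly a projection; $p+q-pq$ equals $1-(1-p)(1-q)$ in the unitization, hence a projection; $p-pq=p(1-q)$ is a product of commuting projections). The distinguished element is $0\in\CA$. Then I would verify that $(\CB,\cap,\cup)$ is a distributive lattice, that $0\cap p=0$, and that $(p\cap q)\cup(p\setminus q)=p$ and $(p\cap q)\cap(p\setminus q)=0$; all of these reduce to elementary algebraic identities among commuting idempotents, e.g.\ $(pq)\cup(p-pq)=pq+(p-pq)-pq(p-pq)=p$ since $pq(p-pq)=pq-pq=0$. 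The partial order $p\subseteq q\iff pq=p$ is then the usual Murray--von Neumann order restricted to projections.

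Next I would handle the identification $\CA\cong C_0(\WCB)$. By Gelfand duality, since $\CA$ is commutative, $\CA\cong C_0(\Omega)$ where $\Omega$ is the spectrum of $\CA$. Because $\CA$ is generated by its projections, $\Omega$ is totally disconnected and locally compact Hausdorff, and under the Gelfand transform each projection $p\in\CB$ corresponds to the characteristic function $1_{K_p}$ of a compact-open subset $K_p\subseteq\Omega$; the hypothesis that $\CA$ is generated by its projections guarantees that the $K_p$ form a basis of compact-open sets for $\Omega$. The map $p\mapsto K_p$ is then a Boolean isomorphism from $\CB$ onto the Boolean algebra of compact-open subsets of $\Omega$: it sends $pq$ to $K_p\cap K_q$, $p+q-pq$ to $K_p\cup K_q$, and $p-pq$ to $K_p\setminus K_q$, and it is injective (since $p\neq q$ means $1_{K_p}\neq 1_{K_q}$ as functions) and surjective onto the compact-open sets. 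Finally, by Stone duality for (generalized) Boolean algebras, the Boolean algebra of compact-open subsets of $\Omega$ has Stone dual homeomorphic to $\Omega$, and its abstract Stone dual is exactly $\WCB$; composing, $\WCB\cong\Omega$ as topological spaces, whence $\CA\cong C_0(\Omega)\cong C_0(\WCB)$.

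The only mildly delicate point—and the main thing to be careful about—is the non-unital case: one must make sure that "generated by its projections" is interpreted so that the projections still separate the points of $\Omega$ and their supports form a basis (so that $\Omega$ really is the Stone dual of $\CB$ rather than a one-point compactification of it), and that Lemma \ref{orthogonal projections} is available to reduce linear combinations of projections to sums of orthogonal ones when checking that the closed linear span of the $1_{K_p}$ is all of $C_0(\Omega)$ (a Stone--Weierstrass argument). Apart from that, everything is routine verification, and I would present it compactly, citing Lemma \ref{orthogonal projections} for the disjointification step and standard Gelfand/Stone duality for the homeomorphism $\WCB\cong\Omega$.
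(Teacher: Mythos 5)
Your proposal is correct, but it takes a genuinely different route from the paper. You verify the Boolean structure on $\CB$ and then pass through Gelfand duality: $\CA\cong C_0(\Omega)$ for the spectrum $\Omega$, the projections correspond exactly to characteristic functions of compact-open subsets, the hypothesis that $\CA$ is generated by projections forces these compact-open sets to form a basis, and generalized Stone duality then identifies $\Omega$ with $\WCB$, giving $\CA\cong C_0(\WCB)$. The paper never introduces the Gelfand spectrum: it works on the dense $*$-subalgebra $\CA_0$ spanned by the projections, uses Lemma \ref{orthogonal projections} to write each element as a linear combination of mutually orthogonal projections, defines $\Phi_0(a)=\sum_i\lambda_i 1_{Z(p_i)}\in C_0(\WCB)$, checks well-definedness (independence of the decomposition, again by disjointification), isometry and multiplicativity, and extends to a $*$-isomorphism onto $C_0(\WCB)$ because the functions $1_{Z(p)}$ generate $C_0(\WCB)$. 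Your route buys conceptual economy (all topology is outsourced to Gelfand and Stone duality), while the paper's route is self-contained at the algebra level and avoids having to prove anything about the spectrum. The one place where your plan is an assertion rather than a proof is precisely the claim that the supports $K_p$ form a basis of compact-open sets of $\Omega$ (equivalently, that $\Omega\cong\WCB$ rather than some larger zero-dimensional space); you correctly flag this as the delicate point, and making it rigorous requires essentially the same ingredients the paper uses directly, namely disjointification plus an $\varepsilon/3$-approximation of a function in $C_0(\Omega)$ by a combination of orthogonal projections to squeeze a compact-open $K_p$ between a point and a prescribed neighborhood. With that argument supplied, your proof is complete and equivalent in strength to the paper's.
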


\begin{proof}
Let $\CA_0$ be the *-subalgebra generated by the projections of $\CA$. By Lemma \ref{orthogonal projections}, an non-zero element of $a\in\CA_0$ can be written as linear combination of non-zero orthogonal projections. Suppose that $a=\sum_{i=1}^n\lambda_ip_i\in\CA_0$, where $p_ip_j=0$ if $i\neq j$ and $\lambda_i\neq 0$ for all $i$, and let $f_a:\WCB\to\mathbb{C}$ be the function given by $f_a=\sum_{i=1}^n\lambda_i1_{Z(p_i)}$. Note that
\begin{equation}\label{norm a f_a}\|a\|=\max_{i=1,\ldots,n}|\lambda_i|=\|f_a\|\end{equation}
since $Z(p_i)\cap Z(p_j)=\emptyset$ if $i\neq j$. We claim that $f_a$ does not depend on the choice of decomposition for $a$. Suppose then that $a=\sum_{j=1}^m\sigma_j q_j$, where $q_iq_j=0$ if $i\neq j$, and $\sigma_j\neq 0$ for all $j$. We apply Lemma \ref{orthogonal projections} in the family $\{p_1,\ldots,p_n,q_1,\ldots,q_m\}$ to find a new family of non-zero mutually orthogonal projections $\{r_1,\ldots,r_t\}$, which is such that for each $i=1,\ldots,n$, there is $I_i\subseteq \{1,\ldots,t\}$ such that $p_i=\sum_{k\in I_i}r_k$, and for each $j=1,\ldots,m$, there is $J_j\subseteq \{1,\ldots,t\}$ such that $q_j=\sum_{k\in J_j}r_k$. Note that if $i\neq i'$, then $I_i\cap I_{i'}=\emptyset$ because $p_ip_{i'}=0$. We can also assume that $\bigcup_{i=1}^n I_i=\{1,\ldots,t\}$, for if there is $k$ such that $k\notin I_i$ for all $i$, then $r_kp_i=0$ for all $i$, and hence $ar_k=0$. This would imply that $r_kq_j=0$ for all $j$ and we can simply remove $r_k$ from the list. Similarly $J_j\cap J_{j'}=\emptyset$ if $j\neq j'$ and $\bigcup_{j=1}^m J_j=\{1,\ldots,t\}$. Moreover if $I_i\cap J_j\neq \emptyset$, then $\lambda_i=\sigma_j$. For each $k=1,\ldots,t$, if we let $\zeta_k:=\lambda_i=\sigma_j$ for the unique $i,j$ such that $k\in I_i\cap J_j$, we see that
\[\sum_{i=1}^n\lambda_i 1_{Z(p_i)}=\sum_{k=1}^t\zeta_k 1_{Z(r_k)}=\sum_{j=1}^m\sigma_j 1_{Z(q_j)},\]
proving the claim. 

The map $\Phi_0:\CA_0\to C_0(\WCB)$ given by $\Phi_0(a)=f_a$ is then well-defined. It is easy to see that $\Phi_0$ is linear and preserves involution. Equation \eqref{norm a f_a} shows that $\Phi_0$ is an isometry, and an argument using Lemma \ref{orthogonal projections} as above shows that $\Phi_0$ is multiplicative. Since the set $\{1_{Z(p)}:p\in\CB\}$ generates $C_0(\WCB)$, $\Phi_0$ extends to a *-isomorphism from $\CA$ onto $C_0(\WCB)$.
\end{proof}

\begin{lem}\label{complement ideal}
Suppose that $\CA=C_0(X)$ for some locally compact Hausdorff space $X$ and let $U$ be an open subset of $X$. Then, $U$ is closed if and only if there is an ideal $J$ of $\CA$ such that $\CA=C_0(U)\oplus J$.
\end{lem}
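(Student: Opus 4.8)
The plan is to use throughout the standard identification of $C_0(U)$, for an open set $U\subseteq X$, with the closed ideal $\{f\in C_0(X):f|_{X\setminus U}=0\}$ of $\CA=C_0(X)$, and to prove the two implications directly, invoking only Urysohn's lemma and the definition of an internal direct sum of ideals. For the forward implication, suppose $U$ is clopen. Then $1_U$ and $1_{X\setminus U}$ are continuous and bounded on $X$, so multiplication by either preserves $C_0(X)$. I would set $J:=\{f\in C_0(X):f|_U=0\}$ (this is the ideal $C_0(X\setminus U)$ under the identification above), write $f=1_Uf+1_{X\setminus U}f$ for $f\in C_0(X)$, and note that $1_Uf$ vanishes on $X\setminus U$ so that $1_Uf\in C_0(U)$, while $1_{X\setminus U}f$ vanishes on $U$ so that $1_{X\setminus U}f\in J$. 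Since any element of $C_0(U)\cap J$ vanishes identically on $X$, the sum is direct and $\CA=C_0(U)\oplus J$.

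For the converse, suppose $\CA=C_0(U)\oplus J$ for some ideal $J$, and assume toward a contradiction that $U$ is not closed, so that there is $x_0\in\overline{U}\setminus U$. Using local compactness and Urysohn's lemma I would choose $f\in C_0(X)$ with $f(x_0)\neq 0$ and decompose it as $f=g+h$ with $g\in C_0(U)$ and $h\in J$; since $g$ vanishes off $U$ and $x_0\notin U$, this forces $h(x_0)=f(x_0)\neq 0$. Now $\{x\in X:|h(x)|>0\}$ is an open neighbourhood of $x_0$ and $x_0\in\overline{U}$, so there is $x_*\in U$ with $h(x_*)\neq 0$. Picking $g'\in C_c(U)\subseteq C_0(U)$ with $g'(x_*)\neq 0$, the product $g'h$ lies in $C_0(U)$ (because $C_0(U)$ is an ideal of $\CA$) and in $J$ (because $J$ is an ideal of $\CA$), hence in $C_0(U)\cap J=\{0\}$; this contradicts $(g'h)(x_*)=g'(x_*)h(x_*)\neq 0$. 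Therefore $U=\overline{U}$ is closed.

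The lemma is elementary and I do not expect a genuine obstacle; the one subtlety is that in the converse the hypothesis must be used in the strong form $x_0\in\overline{U}$, rather than merely $x_0\notin U$, in order to produce a point of $U$ at which $h$ — and hence the witness $g'h\in C_0(U)\cap J$ — fails to vanish. A side benefit of arguing this way is that neither closedness nor self-adjointness of $J$ is needed, only that it is a two-sided ideal.
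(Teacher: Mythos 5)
Your proof is correct. The forward implication is the same as the paper's: for clopen $U$ you split $f=1_Uf+1_{X\setminus U}f$ into $C_0(U)\oplus C_0(X\setminus U)$, exactly as the paper does with its $f_U+f_V$ decomposition. The converse is where you genuinely diverge. The paper invokes the standard Gelfand-type correspondence, writing $J=C_0(V)$ for some open $V\subseteq X$, and then argues set-theoretically that directness forces $U\cap V=\emptyset$ while spanning forces $U\cup V=X$, so $U=X\setminus V$ is closed. You instead avoid the classification of ideals entirely: assuming a point $x_0\in\overline{U}\setminus U$, you decompose a Urysohn function $f$ with $f(x_0)\neq 0$ as $g+h$, deduce $h(x_0)\neq 0$, find $x_*\in U$ where $h\neq 0$, and use that both $C_0(U)$ and $J$ are ideals to place the nonzero element $g'h$ in $C_0(U)\cap J=\{0\}$, a contradiction. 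The paper's route is shorter once one grants the well-known fact, but that fact requires $J$ to be a closed ideal (the usual C*-algebraic convention, so this is not a gap in the paper); your argument is self-contained, uses only Urysohn's lemma and the two-sided ideal property, and, as you note, works even if $J$ is not assumed closed or self-adjoint. Both proofs serve the paper's application (Corollary \ref{partial model C*-algebra}) equally well.
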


\begin{proof}
Suppose first that $U$ is closed and let $V=X\setminus U$. Then $V$ is clopen and clearly $C_0(U)\cap C_0(V)=\{0\}$. Given $f\in C_0(X)$, if we define $f_U:X\to\mathbb{C}$ by $f_U(x)=f(x)$ if $x\in U$ and $f_U(x)=0$ if $x\notin U$, then, because $U$ is clopen, $f_U\in C_0(U)$. Similarly we define $f_V$ so that $f=f_U+f_V$. It follows that $\CA=C_0(U)\oplus C_0(V)$.

Suppose now that $\CA=C_0(U)\oplus J$ for some ideal $J$. It is well-know that $J=C_0(V)$ for some open subset $V$ of $X$. Note that if $U\cap V\neq \emptyset$, then there is $f\in C_0(U)\cap C_0(V)$, and if $X\setminus (U\cup V)\neq\emptyset$, then there is $f\in C_0(X)\setminus(C_0(U)\oplus C_0(V))$, both being contradictions. Hence $U=X \setminus V$ and $U$ is closed.
\end{proof}

\begin{cor}\label{partial model C*-algebra}
Let $\CA$ be a commutative C*-algebra generated by its projections and $\rho=(\{D_t\}_{t \in \mathbb{F}}, \{\tau_t\}_{t \in \mathbb{F}})$ be a semi-saturated, orthogonal topological partial action of a free group $\mathbb{F}$ on $\CA$ such that for every generator $\af$ of $\mathbb{F}$, we have $\CA=D_\af\oplus C_\af$ for some ideal $C_\af$ of $\CA$. Then, there exists a Boolean dynamical system $(\CB,\CL,\theta)$ such that $\CA\ltimes_{\tau}\mathbb{F}\cong C^*(\CB,\CL,\theta)$.
\end{cor}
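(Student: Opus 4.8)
The plan is to reduce the statement to Theorem~\ref{partial action model} via Gelfand duality and then transport the conclusion to the $C^*$-level through the groupoid models of both sides. First I would apply Lemma~\ref{commutative projections}: writing $\CB$ for the Boolean algebra of projections of $\CA$, this gives a $*$-isomorphism $\CA\cong C_0(X)$ with $X=\WCB$ the Stone dual of $\CB$, so $X$ is a Stone space. Under this identification each ideal $D_t$ becomes $C_0(V_t)$ for a unique open set $V_t\subseteq X$, and each $*$-isomorphism $\tau_t: D_{t^{-1}}\to D_t$ is dual to a homeomorphism $\rho_t: V_{t^{-1}}\to V_t$; the axioms of a $C^*$-algebraic partial action then translate into those of a topological partial action $\bar\rho=(\{V_t\}_{t\in\mathbb{F}},\{\rho_t\}_{t\in\mathbb{F}})$ of $\mathbb{F}$ on $X$ (see \cite{ExelBook}). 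The relation $\tau_s\circ\tau_t=\tau_{st}$ whenever $|st|=|s|+|t|$ dualizes to $\rho_s\circ\rho_t=\rho_{st}$, so $\bar\rho$ is semi-saturated; and since $\CA$ is commutative, $D_aD_b=D_a\cap D_b=C_0(V_a\cap V_b)$, so orthogonality of $\rho$ forces $V_a\cap V_b=\emptyset$ for distinct generators $a,b$, that is, $\bar\rho$ is orthogonal.

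Next I would use the complementation hypothesis: for a generator $\af$ of $\mathbb{F}$, the identity $\CA=D_\af\oplus C_\af$ reads $C_0(X)=C_0(V_\af)\oplus C_\af$, so Lemma~\ref{complement ideal} shows that $V_\af$ is clopen in $X$. Thus $X$ together with $\bar\rho$ satisfies every hypothesis of Theorem~\ref{partial action model}, which produces a Boolean dynamical system $(\CB,\CL,\theta)$ (with $\CL$ the generating set of $\mathbb{F}$ and $\CB$ the Boolean algebra of compact-open subsets of $X$, which coincides with the Boolean algebra of projections of $\CA$) together with an equivariant homeomorphism $f: X\to\partial E$, where $\partial E$ is the boundary path space of $(\CB,\CL,\theta,\CR_\af)$ and $\varphi$ is the partial action of Section~\ref{sec: partial action partial E}. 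In particular $\mathbb{F}\ltimes_{\bar\rho}X$ and $\mathbb{F}\ltimes_\varphi\partial E$ are isomorphic as topological groupoids.

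Finally, by \cite{A} the partial crossed product of a topological partial action is isomorphic to the $C^*$-algebra of its transformation groupoid, so
\begin{align*}
\CA\ltimes_\tau\mathbb{F} &\cong C_0(X)\rtimes_{\bar\rho}\mathbb{F} \cong C^*(\mathbb{F}\ltimes_{\bar\rho}X) \\
&\cong C^*(\mathbb{F}\ltimes_\varphi\partial E) \cong C_0(\partial E)\rtimes_{\hat\varphi}\mathbb{F} \cong C^*(\CB,\CL,\theta),
\end{align*}
the last isomorphism being Corollary~\ref{cor: iso} applied with $\CI_\af=\CR_\af$. I expect the only genuinely delicate step to be the dualization in the first paragraph: one has to verify that the $C^*$-algebraic partial-action axioms, in particular $\tau_s(D_{s^{-1}}\cap D_t)=D_s\cap D_{st}$, pass to the corresponding topological axioms and that the resulting $\bar\rho$ is a bona fide topological partial action on $X$. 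Once that is done, the argument is pure bookkeeping, since the substantive work has been carried out in Theorem~\ref{partial action model} and Corollary~\ref{cor: iso}.
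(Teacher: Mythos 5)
Your proposal is correct and follows essentially the same route as the paper: identify $\CA\cong C_0(X)$ for the Stone space $X=\WCB$ via Lemma \ref{commutative projections}, dualize the $C^*$-algebraic partial action to a topological one by Gelfand duality, use the hypothesis $\CA=D_\af\oplus C_\af$ together with Lemma \ref{complement ideal} to see that each $V_\af$ is clopen, and then conclude by Theorem \ref{partial action model} and Corollary \ref{cor: iso}. The paper's proof is just a terser statement of exactly this chain of reductions, so there is nothing to add.
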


\begin{proof}
As discussed above, $\CA\cong C_0(X)$ for some Stone space. The result then follows from Lemmas \ref{commutative projections} and \ref{complement ideal}, the Gelfand duality, Corollary \ref{cor: iso} and Theorem \ref{partial action model}. 
\end{proof}

\section{Graded and gauge-invariant ideals}\label{sec: ideals}

As an application of the results of Section \ref{section model}, we prove that if $(\CB,\CL,\theta,\CI_\af)$ is a generalized Boolean dynamical system and $I$ is a gauge-invariant ideal of $C^*(\CB,\CL,\theta,\CI_\af)$, then $I$ itself is a C*-algebra of a generalized Boolean dynamical system. In order to prove this result, we first prove that gauge-invariant ideals are graded ideals for some suitable gradings.

Let $B$ be a $C^*$-algebra and let $G$ be a discrete group. We say that $B$ is {\it G-graded} if there is a linearly independent family $\{B_g\}_{g \in G}$ of closed linear subspaces of $B$ such that, for all $g,h \in G$, 
\begin{enumerate}
\item $B_gB_h \subseteq B_{gh}$,
\item $B_g^*=B_{g^{-1}},$ and 
\item $\bigoplus_{g \in G} B_g $ is dense in $B$.
\end{enumerate}

Let $B$ be a $G$-graded $C^*$-algebra with grading subspaces $\{B_g\}_{g \in G}$. An ideal $J$ of $B$ is said to be {\it G-graded ideal} if $J=\overline{\bigoplus_{g \in G} J_g}$ where each $J_g=J \cap B_g$.

 By Corollary \ref{cor: iso}, there is a $\mathbb{F}$-grading on $C^*(\CB,\CL,\theta,\CI_\af)$, since $C_0(\partial E) \rtimes_{\hat{\varphi}} \mathbb{F}$ has a natural $\mathbb{F}$-grading given by the partial action. Namely, for $t\in \mathbb{F}$, we set $\big(C_0(\partial E) \rtimes_{\hat{\varphi}} \mathbb{F}\big)_t=C_0(U_t)\delta_t$. There is also a $\mathbb{Z}$-grading that comes from the isomorphism of \cite[Corollary 7.11]{CasK1}. More specifically, this grading is given by $C^*(\CB,\CL,\theta,\CI_\af)_n=\overline{{\rm \operatorname{span}}}\{S_{\af,A}S_{\bt,A}^*: \af,\bt
\in \CL^*,\ |\af|-|\bt|=n ~\text{and}~ A \in \CI_\af\cap \CI_\bt \}$, where $n\in\mathbb{Z}$. Because of Theorem \ref{iso groupoids}, we see that $C^*(\CB,\CL,\theta,\CI_\af)_n$ is the closure of $\bigoplus_{t\in\mathbb{F}, |t|=n}C^*(\CB,\CL,\theta,\CI_\af)_t$.

\begin{prop}\label{graded ideal}
Let $J$ be an ideal of $C^*(\CB,\CL,\theta,\CI_\af)$. The following are equivalent:
\begin{enumerate}
    \item $J$ is a $\mathbb{F}$-graded ideal.
    \item $J$ is a $\mathbb{Z}$-graded ideal.
    \item $J$ is a gauge-invariant ideal.
\end{enumerate}
\end{prop}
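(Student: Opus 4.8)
The plan is to prove $(1)\Rightarrow(2)\Rightarrow(3)\Rightarrow(1)$, the first two implications being routine and the third the substantive one. For $(1)\Rightarrow(2)$: by the description of the $\mathbb{Z}$-grading recalled just before the proposition, the degree-$n$ subspace $C^*(\CB,\CL,\theta,\CI_\af)_n$ is the closure of $\bigoplus_{|t|=n}C^*(\CB,\CL,\theta,\CI_\af)_t$, so each $\mathbb{F}$-homogeneous subspace lies inside a $\mathbb{Z}$-homogeneous one; hence if $J=\overline{\bigoplus_{t}(J\cap C^*(\CB,\CL,\theta,\CI_\af)_t)}$, then $J$ is contained in, and therefore equals, $\overline{\bigoplus_{n}(J\cap C^*(\CB,\CL,\theta,\CI_\af)_n)}$. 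For $(2)\Rightarrow(3)$: since $\gamma_z(S_{\af,A}S_{\bt,A}^*)=z^{|\af|-|\bt|}S_{\af,A}S_{\bt,A}^*$, the degree-$n$ subspace is exactly the $z\mapsto z^n$ spectral subspace of the gauge action $\gamma$, so a $\mathbb{Z}$-graded ideal is a closed sum of $\gamma$-invariant subspaces, hence $\gamma$-invariant.

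For $(3)\Rightarrow(1)$ I would pass to $C_0(\partial E)\rtimes_{\hat{\varphi}}\mathbb{F}$ via Corollary \ref{cor: iso}, under which $\gamma$ becomes the circle action $\sigma$ of the remark following that corollary, acting on $C_0(U_t)\dt_t$ by multiplication by $z^{|t|}$. The first observation is that, for \emph{any} ideal $J$, the ideal $I:=J\cap C_0(\partial E)\dt_\emptyset$ of $C_0(\partial E)$ is $\hat{\varphi}$-invariant: given $f\in I\cap D_{\af^{-1}}$ one has $f\dt_{\af^{-1}}=\lim_\lambda(f\dt_\emptyset)(k_\lambda\dt_{\af^{-1}})\in J$, where $(k_\lambda)$ is an approximate unit of $D_{\af^{-1}}$, and then, using the multiplication rule in the partial crossed product together with Lemma \ref{computations}, $\hat{\varphi}_\af(f)\dt_\emptyset=\lim_\nu(g_\nu\dt_\af)(f\dt_{\af^{-1}})\in J$, where $(g_\nu)$ is an approximate unit of $D_\af$. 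Writing $I=C_0(W)$ for the corresponding $\hat{\varphi}$-invariant open set $W\subseteq\partial E$, the ideal $\langle I\rangle$ equals $C_0(W)\rtimes_{\hat{\varphi}}\mathbb{F}$ (with the restricted partial action) and is $\mathbb{F}$-graded, and $\langle I\rangle\subseteq J$. Passing to the quotient $\bigl(C_0(\partial E)\rtimes_{\hat{\varphi}}\mathbb{F}\bigr)/\langle I\rangle\cong C_0(\partial E\setminus W)\rtimes_{\hat{\varphi}}\mathbb{F}$, which again carries a semi-saturated orthogonal partial action and---since $\langle I\rangle$ is graded, hence $\sigma$-invariant---the induced circle action $\sigma$, we are reduced to showing that a $\sigma$-invariant ideal $\bar{J}$ with $\bar{J}\cap C_0(\partial E\setminus W)\dt_\emptyset=0$ is zero. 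The argument of $(3)\Rightarrow(2)$ applied to this quotient gives $\bar{J}=\overline{\bigoplus_n\bar{J}_n}$ with $\bar{J}_n$ in the degree-$n$ spectral subspace; since $\bar{J}_n^*\bar{J}_n$ lies both in $\bar{J}$ and in the degree-$0$ subspace, it is enough to show that $\bar{J}_0:=\bar{J}\cap\bigl(C_0(\partial E\setminus W)\rtimes_{\hat{\varphi}}\mathbb{F}\bigr)_0$ is zero.

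This last step is the crux, the difficulty being precisely that the $\mathbb{F}$-grading is strictly finer than the circle grading, so gauge-invariance only controls the $\mathbb{Z}$-homogeneous components. Write $\CF:=\bigl(C_0(\partial E\setminus W)\rtimes_{\hat{\varphi}}\mathbb{F}\bigr)_0=\overline{\bigcup_{k\ge0}\CF_k}$ with $\CF_k=\overline{\operatorname{span}}\{1_{\CN(\af,A)}\dt_\af(1_{\CN(\bt,A)}\dt_\bt)^*:|\af|=|\bt|\le k\}$ and $\CF_0=C_0(\partial E\setminus W)\dt_\emptyset$. The multiplication rules in the partial crossed product together with Lemma \ref{computations} and the orthogonality of the partial action give, for words $\mu,\af$ of equal length, $(1_{\CN(\mu,A)}\dt_\mu)^*(1_{\CN(\af,B)}\dt_\af)=\delta_{\af,\mu}\,1_{\CN(\emptyset,A\cap B)}\dt_\emptyset$, so that the elements $1_{\CN(\af,A)}\dt_\af(1_{\CN(\bt,A)}\dt_\bt)^*$ with $|\af|=|\bt|=k$ behave like matrix units and realize $\CF_k$ as a matricial algebra over $\CF_0$, indexed by the length-$k$ words. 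In particular, for a length-$k$ word $\mu$ the diagonal compression $x\mapsto(1_{\CN(\mu,A)}\dt_\mu)^*x(1_{\CN(\mu,A)}\dt_\mu)$ sends $\CF_k$ into $\CF_0$, and it sends $\bar{J}_0$ into $\bar{J}_0$ because it has degree $0$ and $1_{\CN(\mu,A)}\dt_\mu$ and its adjoint lie in the ambient algebra. Now if $\bar{J}_0\ne0$, choose a nonzero positive $b\in\bar{J}_0$ and approximate it by a positive $y\in\CF_k$; as $y$ is a nonzero positive element of the matricial algebra $\CF_k$ it has a nonzero diagonal compression, and the corresponding compression of $b$ is then a nonzero element of $\bar{J}_0\cap\CF_0=\bar{J}_0\cap C_0(\partial E\setminus W)\dt_\emptyset$, contradicting the hypothesis. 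Hence $\bar{J}_0=0$, so $\bar{J}=0$, so $J=\langle I\rangle$ is $\mathbb{F}$-graded, which closes the cycle.
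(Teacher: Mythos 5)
Your (1)$\Rightarrow$(2) and (2)$\Rightarrow$(3) are essentially the paper's own arguments. For (3)$\Rightarrow$(1), however, the paper does not argue from scratch: it invokes \cite[Proposition 7.3]{CaK2}, which says that a gauge-invariant ideal is generated by elements of its $\emptyset$-component, and gradedness follows. You instead attempt a self-contained proof, and the reduction is fine up to a point: the $\hat{\varphi}$-invariance of $I=J\cap C_0(\partial E)\dt_\emptyset$, the inclusion $\langle I\rangle\subseteq J$, the identification of the quotient with $C_0(\partial E\setminus W)\rtimes_{\hat{\varphi}}\mathbb{F}$, the decomposition $\bar{J}=\overline{\bigoplus_n\bar{J}_n}$ of a circle-invariant ideal (not literally ``the argument of (3)$\Rightarrow$(2)'', which you never gave, but routine via Ces\`aro means), and the observation $\bar{J}_n^*\bar{J}_n\subseteq\bar{J}_0$ are all correct. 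The trouble is the final paragraph, which is exactly the part carrying the weight of a gauge-invariant uniqueness theorem, and it has a genuine gap.

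Two things go wrong there. First, $b\in\bar{J}_0$ is only approximated by $y\in\CF_k$, and the compression $(1_{\CN(\mu,A)}\dt_\mu)^*b\,(1_{\CN(\mu,A)}\dt_\mu)$ lies in $\bar{J}_0$ but in general \emph{not} in $\CF_0$: components of $b$ of the form $f\dt_{\af\bt^{-1}}$ with $|\af|=|\bt|>k$ and $\mu$ a common prefix of $\af$ and $\bt$ compress to components at the nontrivial group element $\af'(\bt')^{-1}$. So you only produce an element of $\bar{J}_0$ \emph{close} to $\CF_0$, which does not contradict $\bar{J}\cap\CF_0=0$; this part could be repaired by the standard inductive-limit fact that a nonzero ideal of $\CF=\overline{\bigcup_k\CF_k}$ must meet some $\CF_k$ (otherwise the quotient map is isometric on each $\CF_k$, hence on $\CF$). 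Second, and more seriously, the structural claim you rely on is false: $\CF_k$ is not a matricial algebra over $\CF_0$ indexed by length-$k$ words, and a nonzero positive element of $\CF_k$ need not have a nonzero compression by any word of length $k$. The obstruction is the finite boundary paths of length $<k$, i.e., the singular part of the dynamics. For instance, if $\emptyset\neq A\in\CB$ has $\Delta_A=\emptyset$, then every path in $\CN(\emptyset,A)$ has length zero, so $y=1_{\CN(\emptyset,A)}\dt_\emptyset$ is a nonzero positive element of $\CF_0\subseteq\CF_k$ with $y\,(1_{\CN(\mu,B)}\dt_\mu)=0$ for every $\mu$ with $|\mu|=k\geq 1$; all your length-$k$ compressions annihilate it, and nothing in your argument rules out that the relevant part of $y$ (or of $b$) is of this ``terminating'' type. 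A correct argument needs the finer decomposition of the core into pieces indexed by all lengths $j\leq k$ (paths terminating at step $j$), compressed by words of the matching length, together with a norm estimate against the faithful expectation onto the diagonal --- which is precisely the content of the result the paper outsources to \cite{CaK2}. As written, your cycle (3)$\Rightarrow$(1) does not close.
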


\begin{proof}
Note that the $\emptyset$-component with respect to the $\mathbb{F}$-grading is contained in the $0$-component with respect to the $\mathbb{Z}$-grading so that (1) implies (2). 

Suppose that $J$ is a $\mathbb{Z}$-graded ideal. For each $n\in\mathbb{Z}$, $z\in\mathbb{T}$ and $x\in J\cap C^*(\CB,\CL,\theta,\CI_\af)_n$, we have that $\gamma_z(x)=z^n x\in J\cap C^*(\CB,\CL,\theta,\CI_\af)_n$. It follows that $$\gamma_z\left(\bigoplus_{n\in\mathbb{Z}} J\cap C^*(\CB,\CL,\theta,\CI_\af)_n\right)\subseteq \bigoplus_{n\in\mathbb{Z}} J\cap C^*(\CB,\CL,\theta,\CI_\af)_n.$$
Since $J$ is $\mathbb{Z}$-graded and $\gamma_z$ is continuous, we conclude that $\gamma_z(J)\subseteq J$ and hence $J$ is gauge-invariant.

Finally, if $J$ is gauge-invariant. By \cite[Proposition 7.3]{CaK2}, $J$ is generated by elements of the $\emptyset$-component of $J$ and hence $J$ is $\mathbb{F}$-graded.
\end{proof}

\begin{cor}\label{cor: gauge-invariant ideal}
Let $(\CB,\CL,\theta,\CI_\af)$ be a generalized Boolean dynamical system and $J$ be a gauge-invariant ideal of $C^*(\CB,\CL,\theta,\CI_\af)$. Then, there exists a generalized Boolean dynamical system $(\CB^J,\CL^J,\theta^J,\CI_\af^J)$ such that $J\cong C^*(\CB^J,\CL^J,\theta^J,\CI_\af^J)$.
\end{cor}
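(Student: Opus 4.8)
The plan is to apply Corollary \ref{partial model C*-algebra} to the ideal $J$. By Corollary \ref{cor: iso}, we may identify $C^*(\CB,\CL,\theta,\CI_\af)$ with the partial crossed product $C_0(\partial E)\rtimes_{\hat\varphi}\mathbb{F}$, which carries the natural $\mathbb{F}$-grading $\big(C_0(\partial E)\rtimes_{\hat\varphi}\mathbb{F}\big)_t=C_0(U_t)\dt_t$. By Proposition \ref{graded ideal}, the gauge-invariant ideal $J$ is $\mathbb{F}$-graded, so $J=\overline{\bigoplus_{t\in\mathbb{F}}(J\cap C_0(U_t)\dt_t)}$. The first step is to show that $J$ is itself a partial crossed product: for each $t\in\mathbb{F}$ the subspace $J_t:=J\cap C_0(U_t)\dt_t$ is of the form $C_0(W_t)\dt_t$ for some open $W_t\subseteq U_t$ (since $J_\emptyset$ is an ideal of $C_0(\partial E)$, hence $C_0(W)$ for an open $W\subseteq\partial E$, and $J_t=\overline{J_\emptyset\cdot C_0(U_t)\dt_t}$ gives $W_t=W\cap U_t$), and that the $W_t$ are exactly the domains of a restricted partial action $\tau$ of $\mathbb{F}$ on $C_0(W)$; this restriction exists precisely because $J$ is an ideal, so $\hat\varphi_t$ carries $J_{t^{-1}}$ into $J_t$. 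Then $J\cong C_0(W)\rtimes_\tau\mathbb{F}$, and $W$ is a locally compact Hausdorff space with a basis of compact-open sets (being open in $\partial E$, which has such a basis by Remark \ref{basis of partial E}), so $C_0(W)$ is commutative and generated by its projections.

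The second step is to verify that the restricted partial action $\tau$ on $C_0(W)$ satisfies the hypotheses of Corollary \ref{partial model C*-algebra}: it must be semi-saturated, orthogonal, and for each generator $\af\in\CL$ we need $C_0(W)=D_\af\oplus C_\af$ for some ideal $C_\af$, equivalently (by Lemma \ref{complement ideal}) that $W\cap U_\af$ is clopen in $W$. Semi-saturatedness and orthogonality are inherited from $\Phi$ essentially for free, since the domain sets and the maps of $\tau$ are restrictions of those of $\Phi$. The clopenness of $W\cap U_\af$ in $W$ follows from Lemma \ref{U af closed}: $U_\af$ is closed in $\partial E$ and open (Lemma \ref{open sets}), so $U_\af$ is clopen in $\partial E$, hence $W\cap U_\af$ is clopen in $W$. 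With all hypotheses checked, Corollary \ref{partial model C*-algebra} produces a Boolean dynamical system $(\CB^J,\CL^J,\theta^J)$ with $J\cong C_0(W)\rtimes_\tau\mathbb{F}\cong C^*(\CB^J,\CL^J,\theta^J)=C^*(\CB^J,\CL^J,\theta^J,\CR_\af^J)$, which is in particular a generalized Boolean dynamical system with $\CI_\af^J=\CR_\af^J$.

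The main obstacle I expect is the first step: carefully establishing that restricting $\hat\varphi$ to the ideal $J$ yields a genuine partial action on $C_0(W)$ whose crossed product is (isometrically) $J$ and not merely a quotient or a subalgebra. Concretely one must check that $J$, as a $\mathbb{F}$-graded subalgebra, is nondegenerately a partial crossed product — i.e., that the grading subspaces $J_t=C_0(W_t)\dt_t$ close up to all of $J$ with the correct universal norm, so that $J\cong C_0(W)\rtimes_\tau\mathbb{F}$. This should follow from the general theory of graded ideals in partial crossed products (an ideal that is graded for the grading of a partial crossed product is again a partial crossed product, by the restricted partial action on the corresponding ideal of the coefficient algebra), together with the identification $J_\emptyset=C_0(W)$ with $W=\bigcup\{Z: Z \text{ compact-open},\ 1_Z\dt_\emptyset\in J\}$ open in $\partial E$; the remaining verifications (orthogonality, semi-saturation, the clopen splitting) are then routine given the lemmas cited above.
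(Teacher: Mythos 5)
Your proposal is correct and follows essentially the same route as the paper: pass to the ideal $I=J\cap C_0(\partial E)\cong C_0(W)$, identify $J$ with the partial crossed product of the restricted action on it, use Lemmas \ref{open sets} and \ref{U af closed} to see that the sets $U_\af\cap W$ are clopen in $W$, and then apply the modelling results of Section \ref{section model}. The step you flag as the main obstacle—that an $\mathbb{F}$-graded ideal is isomorphic to the crossed product of the restriction of $\hat{\varphi}$ to $I$—is exactly what the paper settles by citing \cite[Propositions 23.1 and 23.11]{ExelBook} together with \cite[Proposition 3.1]{ELQ2002}, so your appeal to the general theory of invariant ideals in partial crossed products is legitimate and needs only those references to be complete.
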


\begin{proof}
By Proposition \ref{graded ideal}, $J$ is $\mathbb{F}$-graded. By Corollary \ref{cor: iso}, we can see $C_0(\partial E)$ as a subalgebra of $C^*(\CB,\CL,\theta,\CI_\af)$. Then, for $I=J\cap C_0(\partial E)$ we obtain an ideal of $C_0(\partial E)$ which is $\hat{\varphi}$-invariant by \cite[Proposition~23.11]{ExelBook} and such that $J=\left< I \right>$ by \cite[Proposition~23.1]{ExelBook}. By \cite[Proposition~3.1]{ELQ2002}, we have that $J\cong I\rtimes_{\hat{\varphi}}\mathbb{F}$. Also, there is an open set $U$ of $\partial E$ such that $I\cong C_0(U)$, which is $\varphi$-invariant because $I$ is $\hat{\varphi}$-invariant. Moreover, by \cite{A}, $I\rtimes_{\hat{\varphi}}\mathbb{F}\cong C^*(\mathbb{F}\ltimes_{\varphi} U)$. We note that $U$ is a Stone space because it is a open subset of the Stone space $\partial E$. By Lemmas \ref{open sets} and \ref{U af closed}, the sets $U_\af\cap U$ are clopen with respect to $U$ for all $\af\in\CL$. By Theorem \ref{partial action model}, we obtain a generalized Boolean dynamical system $(\CB^J,\CL^J,\theta^J,\CI_\af^J)$ modelling the partial action of $\mathbb{F}$ on $U$. By Corollary \ref{cor: iso}, we obtain $J\cong C^*(\CB^J,\CL^J,\theta^J,\CI_\af^J)$.
\end{proof}

\end{document}